\newtheorem{defin}{Definition}
\newtheorem{lemma}{Lemma}  
\newtheorem{prop}{Proposition}
\newtheorem{theo}{Theorem}
\newtheorem{corol}{Corollary}
\newenvironment{proof}{\medskip\par\noindent{\bf Proof}}{\hfill $\Box$
\medskip\par}
\def\a{\alpha} 
\def\b{\beta} 
\def\C{\mathbb{C}} 
\def\N{\mathbb{N}}
\def\R{\mathbb{R}}  
\def\Z{\mathbb{Z}}
\begin{document}



\title{On $q-$Gevrey asymptotics for singularly perturbed $q-$difference-differential problems with an irregular singularity}
\author{Alberto Lastra, St\'ephane Malek}

\maketitle

\begin{center}
{\bf Abstract}
\end{center}

We study a $q-$analog of a singularly perturbed Cauchy problem with irregular singularity in the complex domain which generalizes a previous result by S. Malek in~\cite{malek}. First, we construct solutions defined in open $q-$spirals to the origin. By means of a $q-$Gevrey version of Malgrange-Sibuya theorem we show the existence of a formal power series in the perturbation parameter which turns out to be the $q-$Gevrey asymptotic expansion (of certain type) of the actual solutions.

Key words: $q-$Laplace transform, Malgrange-Sibuya theorem, $q-$Gevrey asymptotic expansion, formal power series. 2010 MSC: 35C10, 35C20.

\begin{section}{Introduction}
We study a family of $q$-difference-differential equations of the following form
\begin{equation}\label{eq1}\epsilon t \partial_{z}^{S}X(\epsilon,qt,z)+\partial_z^{S}X(\epsilon,t,z)=\sum_{k=0}^{S-1}b_k(\epsilon,z)(t\sigma_q)^{m_{0,k}}(\partial_z^kX)(\epsilon,t,zq^{-m_{1,k}}),
\end{equation}
where $q\in\C$ such that $|q|>1$, $m_{0,k},m_{1,k}$ are positive integers, $b_{k}(\epsilon,z)$ are polynomials in $z$ with holomorphic coefficients in $\epsilon$ on some neighborhood of 0 in $\C$ and $\sigma_{q}$ is the dilation operator given by $(\sigma_{q}X)(\epsilon,t,z)=X(\epsilon,qt,z)$. As in previous works~\cite{malek1},~\cite{malek2},~\cite{lastramaleksanz}, the map $(t,z)\mapsto (q^{m_{0,k}}t,zq^{-m_{1,k}})$ is assumed to be a volume shrinking map, meaning that the modulus of the Jacobian determinant $|q|^{m_{0,k}-m_{1,k}}$ is less than 1, for every $0\le k\le S-1$.

In~\cite{malek}, the second author studies a similar singularly perturbed Cauchy problem. In this previous work, the polynomial $b_{k}(\epsilon,z):=\sum_{s\in I_{k}}b_{ks}(\epsilon)z^s$ is such that, for all $0\le k\le S-1$, $I_{k}$ is a finite subset of $\N=\{0,1,...\}$ and $b_{ks}(\epsilon)$ are bounded holomorphic functions on some disc $D(0,r_{0})$ in $\C$ which verify that the origin is a zero of order at least $m_{0,k}$. The main point on these flatness conditions on the coefficients in $b_{k}(\epsilon,z)$ is that the method used by M. Canalis-Durand, J. Mozo-Fern\'andez and R. Sch\"{a}fke in~\cite{canalisjorge} could be adapted so that the initial singularly perturbed problem turns into an auxiliar regularly perturbed $q-$difference$-$differential equation with an irregular singularity at $t=0$, preserving holomorphic coefficients $b_{ks}$ (we refer to~\cite{malek} for the details). These constricting conditions on the flatness of $b_{k}(\epsilon,z)$ is now omitted, so that previous result is generalized. In the present work we will not only make use of the procedure considered in~\cite{canalisjorge} but also of the methodology followed in~\cite{malek0}. In that work, the second author considers a family of singularly perturbed nonlinear partial differential equations such that the coefficients appearing possess poles with respect to $\epsilon$ at the origin after the change of variable $t\mapsto t/\epsilon$. This scenary fits our problem.

In both, the present work and~\cite{malek0}, the procedure for locating actual solutions relies on the research of certain appropriate Banach spaces. The ones appearing here may be regarded as $q-$analogs of the ones in~\cite{malek0}.  

In order to fix ideas we first settle a brief summary of the procedure followed. We consider a finite family of discrete $q-$spirals $(U_{I}q^{-\N})_{I\in\mathcal{I}}$ in such a way that it provides a good covering at 0 (Definition~\ref{def527}). 

We depart from a finite family, with indices belonging to a set $\mathcal{I}$, of perturbed Cauchy problems (\ref{ee1})+(\ref{ii1}). Let $I\in\mathcal{I}$ be fixed. Firstly, by means of a non-discrete $q-$analog of Laplace transform introduced by C. Zhang in~\cite{zhang1} (for details on classical Laplace transform we refer to~\cite{balser},\cite{costin}), we are able to transform our initial problem into auxiliary equation (\ref{fe1}) (or (\ref{fe2})). 

The transformed problem fits into certain Cauchy auxiliar problem such as (\ref{fe1})+(\ref{ic1}) which is considered in Section~\ref{seccion1}. Here, its solution is found in the space of formal power series in $z$ with coefficients belonging to the space of holomorphic functions defined in the product of discrete $q-$spirals to the origin in the variable $\epsilon$ (this domain corresponds to $U_{I}q^{-\N}$ in the auxiliar transformed problem) times a continuous $q-$spiral to infinity in the variable $\tau$ ($V_{I}q^{\R_{+}}$ for the auxiliar equation). Moreover, for any fixed $\epsilon$ and regarding our auxiliar equation, one can deduce that the coefficients, as functions in the variable $\tau$, belong to the Banach space of holomorphic functions in $V_{I}q^{\R_+}$ subjected to $q-$Gevrey bounds
$$ |W_{\b}^{I}(\epsilon,\tau)|\le C_{1}\b!H^{\b}e^{M\log^{2}|\tau/\epsilon|}\left|\frac{\tau}{\epsilon}\right|^{C\b}|q|^{-A_{1}\b^2},\quad \tau\in V_{I}q^{\R_+}$$
for positive constants $C_{1},C,M,H, A_{1}>0$, where the index of the coefficient considered is $\b$ (see Theorem~\ref{teorema1}).

Also, the transformed problem fits into the auxiliar problem (\ref{fe2})+(\ref{ic2}), studied in details in Section~\ref{seccion2}. In this case, the solution is found in the space of formal power series in $z$ with coefficients belonging to the space of holomorphic functions defined in the product of a punctured disc at 0 in the variable $\epsilon$ times a punctured disc at the origin in $\tau$. For a fixed $\epsilon$, the coefficients belong to the Banach space of holomorphic functions in $D(0,\rho_0)\setminus\{0\}$ such that 
$$|W_{\b}^{I}(\epsilon,\tau)|\le C_{1}\b!H^{\b}e^{M\log^{2}|\tau/\epsilon|}|\epsilon|^{-C\b}|q|^{-A_{1}\b^2},\quad\tau\in D(0,\rho_{0})\setminus\{0\}$$
for positive constants $C_{1},C,M,H, A_{1}>0$ when $\b$ is the coefficient considered (see Theorem~\ref{teorema2}). 

From these results, we get a sequence $(W_{\b}^{I})_{\b\in\N}$ consisting of holomorphic functions in the variable $\tau$ so that $q-$Laplace transform can be applied to its elements. In addition, the function
\begin{equation}\label{e10}
X_I(\epsilon,t,z):=\sum_{\b\ge0}\mathcal{L}^{\lambda_I}_{q;1}W_{\b}^{I}(\epsilon,\epsilon t)\frac{z^{\b}}{\b!}
\end{equation}
turns out to be a holomorphic function defined in $U_{I}q^{-\N}\times \mathcal{T}\times \C$ which is a solution of the initial problem. Here, $\mathcal{T}$ is an adequate open half $q-$spiral to 0 and $\lambda_{I}$ corresponds to certain $q-$directions for the $q-$Laplace transform (see Proposition~\ref{prop11}). The way to proceed is also followed by the authors in~\cite{vizio1} and~\cite{vizio2} when studying asymptotic properties of analytic solutions of $q-$difference equations with irregular singularities.

It is worth pointing out that the choice of a continuous summation procedure unlike the discrete one in~\cite{malek} is due to the requirement of Cauchy's theorem on the way.

At this point we own a finite family $(X_{I})_{I\in\mathcal{I}}$ of solutions of (\ref{ee1})+(\ref{ii1}). The main goal is to study its asymptotic behavior at the origin in some sense. Let $\rho>0$. One observes (Theorem~\ref{teo572}) that whenever the intersection $U_{I}\cap U_{I'}$ is not empty we have
\begin{equation}\label{e11}|X_{I}(\epsilon, t, z )-X_{I'}(\epsilon, t, z)|\le C_{1}e^{-\frac{1}{A}\log^2|\epsilon|}
\end{equation}
for positive constants $C_{1},A$ and for every $(\epsilon, t,z)\in (U_{I}q^{-\N}\cap U_{I'}q^{-\N})\times \mathcal{T}\times D(0,\rho)$. Equation (\ref{e11}) implies that the difference of two solutions of (\ref{ee1})+(\ref{ii1}) admits $q-$Gevrey null expansion of type $A>0$ at 0 in $U_{I}\cap U_{I'}$ as a function with values in the Banach space $\mathbb{H}_{\mathcal{T},\rho}$ of holomorphic bounded functions defined in $\mathcal{T}\times D(0,\rho)$ endowed with the supremum norm. Flatness condition (\ref{e11}) allows us to establish the main result of the present work (Theorem~\ref{teopral}): the existence of a formal power series 
$$\hat{X}(\epsilon)=\sum_{k\ge 0}\frac{X_{k}}{k!}\epsilon^{k}\in\mathbb{H}_{\mathcal{T},\rho}[[\epsilon]],$$
formal solution of (\ref{eq1}), such that for every $I\in\mathcal{I}$, each of the actual solutions (\ref{e10}) of the problem (\ref{ee1})+(\ref{ii1}) admits $\hat{X}$ as its $q-$Gevrey expansion of a certain type in the corresponding domain of definition.

The main result heavily rests on  a Malgrange-Sibuya type theorem involving $q-$Gevrey bounds, which generalizes a result in~\cite{malek} where no precise bounds on the asymptotic appears. In this step, we make use of Whitney-type extension results in the framework of ultradifferentiable functions. Whitney-type extension theory is widely studied in literature under the framework of ultradifferentiable functions subjected to bounds of their derivatives (see for example~\cite{chch},~\cite{bonetbraunmeisetaylor}) and also it is a useful tool taken into account on the study of continuity of ultraholomorphic operators (see~\cite{javi},\cite{thilliez},\cite{lastrasanz}). It is also worth saying that, although $q-$Gevrey bounds have been achieved in the present work, the type involved might be increased when applying an extension result for ultradifferentiable functions from~\cite{bonetbraunmeisetaylor}.

The paper is organized as follows.\\
In Section 2 and Section 3, we introduce Banach spaces of formal power series and solve auxiliary Cauchy problems involving these spaces. In Section 2, this is done when the variables rely in a product of a discrete $q-$spiral to the origin times a $q-$spiral to infinity, while in Section 3 it is done when working on a product of a punctured disc at 0 times a disc at 0.

In Section 4 we first recall definitions and some properties related to $q-$Laplace transform appearing in~\cite{zhang1}, firstly developed by C. Zhang. In this section we also find actual solutions of the main Cauchy problem (\ref{ee1})+(\ref{ii1}) and settle a flatness condition on the difference of two of them so that, when regarding the difference of two solutions in the variable $\epsilon$, we are able to give some information on its asymptotic behavior at 0. Finally, in Section 6 we conclude with the existence of a formal power series in $\epsilon$ with coefficients in an adequate Banach space of functions which solves in a formal sense the problem considered. The procedure heavily rests on a $q-$Gevrey version of Malgrange-Sibuya theorem, developed in Section 5.

\end{section}

\begin{section}{A Cauchy problem in weighted Banach spaces of Taylor series}\label{seccion1}

$M,A_{1},C>0$ are fixed positive real numbers throughout the whole paper.

Let $U,V$ be bounded sets in $\C^{\star}$ and let $q\in\C^{\star}$ such that $|q|>1$.  We define 
$$ Uq^{-\N}=\{\epsilon q^{-n}\in\C: \epsilon\in U,n\in\N\}\quad,\quad Vq^{\R_+}=\{\tau q^{l}\in\C:\tau\in V,l\in\R,l\ge0\}.$$
We assume there exists $M_{1}>0$ such that $|\tau+1|>M_{1}$ for all $\tau\in Vq^{\R_+}$ and also that the distance from the set $V$ to the origin is positive.

\begin{defin}
Let $\epsilon\in Uq^{-\N}$ and $\b\in\N$. We denote $E_{\b,\epsilon,Vq^{\R_+}}$ the vector space of functions $v\in\mathcal{O}(Vq^{\R_+})$ such that 
$$\left\|v(\tau)\right\|_{\b,\epsilon,Vq^{\R_+}}:=\sup_{\tau\in Vq^{\R_+}}\left\{\frac{|v(\tau)|}{e^{M\log^2\left|\frac{\tau}{\epsilon}\right|}}\left|\frac{\tau}{\epsilon}\right|^{-C\beta}\right\}|q|^{A_{1}\b^2}$$
is finite. 

Let $\delta>0$. We denote by $H(\epsilon,\delta,Vq^{\R_+})$ the complex vector space of all formal series $v(\tau,z)=\sum_{\beta\ge 0}v_{\b}(\tau)z^{\b}/\b!$ belonging to $\mathcal{O}(Vq^{\R_+})[[z]]$ such that 
$$\left\|v(\tau,z)\right\|_{(\epsilon,\delta,Vq^{\R_+})}:=\sum_{\b\ge 0}\left\|v_{\b}(\tau)\right\|_{\b,\epsilon,Vq^{\R_+}}\frac{\delta^\b}{\b!}<\infty.$$
It is straightforward to check that the pair $(H(\epsilon,\delta,Vq^{\R_+}),\left\|\cdot\right\|_{(\epsilon,\delta,Vq^{\R_+})})$ is a Banach space.
\end{defin}

We consider the formal integration operator $\partial_{z}^{-1}$ defined on $\mathcal{O}(Vq^{\R_+})[[z]]$ by
$$\partial_{z}^{-1}(v(\tau,z)):=\sum_{\b\ge1}v_{\beta-1}(\tau)\frac{z^\b}{\b!}\in\mathcal{O}(Vq^{\R_+})[[z]].$$

\begin{lemma}
\label{lema1}
Let $s,k,m_{1},m_{2}\in\N$, $\delta>0$, $\epsilon\in Uq^{-\N}$. We assume that the following conditions hold:
\begin{equation}\label{e79}
  m_1\le C(k+s)\quad,\quad m_2\ge 2(k+s)A_1.
\end{equation}
  
Then, there exists a constant $C_{1}=C_{1}(s,k,m_{1},m_{2},V,U,C,A_{1})$ (not depending on $\epsilon$ nor $\delta$) such that 
\begin{equation}\label{e84}
\left\|z^{s}\left(\frac{\tau}{\epsilon}\right)^{m_{1}}\partial^{-k}_{z}v(\tau,zq^{-m_{2}})\right\|_{(\epsilon,\delta,Vq^{\R_+})}\le C_{1}\delta^{k+s}\left\|v(\tau,z)\right\|_{(\epsilon,\delta,Vq^{\R_+})},
\end{equation}
for every $v\in H(\epsilon,\delta,Vq^{\R_+})$.
\end{lemma}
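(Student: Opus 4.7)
The plan is to expand the operator applied to $v$ as a formal series in $z$, estimate each coefficient in the $\|\cdot\|_{\gamma,\epsilon,Vq^{\R_+}}$-norm separately, and then sum. Writing $v(\tau,z)=\sum_{\b\ge 0}v_\b(\tau)z^\b/\b!$ and iterating the definition of $\partial_z^{-1}$, a direct computation gives
$$
z^s\Bigl(\frac{\tau}{\epsilon}\Bigr)^{m_1}\partial_z^{-k}v(\tau,zq^{-m_2})=\sum_{\gamma\ge s+k}W_\gamma(\tau)\,\frac{z^\gamma}{\gamma!},\quad W_\gamma(\tau)=\Bigl(\frac{\tau}{\epsilon}\Bigr)^{m_1}\frac{\gamma!}{(\gamma-s)!}\,q^{-m_2(\gamma-s)}\,v_{\gamma-s-k}(\tau).
$$
So the task reduces to bounding $\|W_\gamma\|_{\gamma,\epsilon,Vq^{\R_+}}$ by a constant multiple of $\|v_{\gamma-s-k}\|_{\gamma-s-k,\epsilon,Vq^{\R_+}}$ and then controlling the weighted sum.

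For the individual estimate I would split the weight as $|\tau/\epsilon|^{m_1-C\gamma}=|\tau/\epsilon|^{-C(\gamma-s-k)}\cdot|\tau/\epsilon|^{m_1-C(s+k)}$, so that the first factor matches the $\|\cdot\|_{\gamma-s-k,\epsilon,Vq^{\R_+}}$-weight on $v_{\gamma-s-k}$. The first hypothesis $m_1\le C(s+k)$ makes the exponent $m_1-C(s+k)$ nonpositive, while the standing assumptions that $V$ is at positive distance from $0$, that $U$ is bounded, and that $|q|>1$ together yield a uniform lower bound $|\tau/\epsilon|\ge x_0>0$ on $Vq^{\R_+}\times Uq^{-\N}$; hence $|\tau/\epsilon|^{m_1-C(s+k)}$ is majorized by a constant depending only on $s,k,m_1,U,V,C$. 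After this absorption, the definition of $\|v_{\gamma-s-k}\|_{\gamma-s-k,\epsilon,Vq^{\R_+}}$ yields
$$
\|W_\gamma\|_{\gamma,\epsilon,Vq^{\R_+}}\le \widetilde{C}\,\frac{\gamma!}{(\gamma-s)!}\,|q|^{A_1\gamma^2-A_1(\gamma-s-k)^2-m_2(\gamma-s)}\,\|v_{\gamma-s-k}\|_{\gamma-s-k,\epsilon,Vq^{\R_+}}.
$$
Expanding the $|q|$-exponent as $\gamma(2A_1(s+k)-m_2)+(m_2 s-A_1(s+k)^2)$, the second hypothesis $m_2\ge 2A_1(s+k)$ makes the coefficient of $\gamma$ nonpositive and, since $|q|>1$, the entire $|q|$-factor is dominated by a constant depending only on $q,s,k,m_2,A_1$.

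Finally, summing $\|W_\gamma\|_{\gamma,\epsilon,Vq^{\R_+}}\delta^\gamma/\gamma!$ over $\gamma\ge s+k$, the factor $\gamma!/(\gamma-s)!$ cancels the $\gamma!$ in the denominator, leaving $1/(\gamma-s)!$; reindexing $\b=\gamma-s-k$ and using $1/(\b+k)!\le 1/\b!$ permits extraction of the common $\delta^{s+k}$ and identifies the leftover series with $\|v\|_{(\epsilon,\delta,Vq^{\R_+})}$, which delivers (\ref{e84}). The genuinely delicate step, and thus the main obstacle, is the very first one: absorbing the factor $(\tau/\epsilon)^{m_1}$ into the weight of the target norm. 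This is where the geometric hypotheses on $U$ and $V$ are essential, for the required uniform positive lower bound on $|\tau/\epsilon|$ would fail without them, and the first inequality in (\ref{e79}) alone would then not suffice to carry out the absorption.
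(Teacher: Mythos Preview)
Your proof is correct and follows essentially the same approach as the paper's: both expand the operator as a formal series, compare the $\|\cdot\|_{\gamma,\epsilon,Vq^{\R_+}}$-weight of each coefficient against the $\|\cdot\|_{\gamma-s-k,\epsilon,Vq^{\R_+}}$-weight of $v_{\gamma-s-k}$ via the splitting $|\tau/\epsilon|^{m_1-C\gamma}=|\tau/\epsilon|^{-C(\gamma-s-k)}|\tau/\epsilon|^{m_1-C(s+k)}$, and then bound the residual factors $|\tau/\epsilon|^{m_1-C(s+k)}$ and $|q|^{p(\gamma)}$ using, respectively, the bounds $C_V=\min_{\tau\in V}|\tau|>0$, $C_U=\max_{\epsilon\in U}|\epsilon|<\infty$ together with $m_1\le C(k+s)$, and the linearization $p(\gamma)=(2(k+s)A_1-m_2)\gamma-(k+s)^2A_1+m_2s$ together with $m_2\ge 2(k+s)A_1$. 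The final summation, with the inequality $1/(\gamma-s)!\le 1/(\gamma-s-k)!$, is also identical.
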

\begin{proof}
Let $v(\tau,z)=\sum_{\b\ge0}v_{\b}(\tau)\frac{z^\b}{\b!}\in\mathcal{O}(Vq^{\R_+})[[z]]$. We have that
\begin{align}
\left\|z^{s}\left(\frac{\tau}{\epsilon}\right)^{m_{1}}\partial^{-k}_{z}v(\tau,zq^{-m_{2}})\right\|_{(\epsilon,\delta,Vq^{\R_+})}&=\left\|\sum_{\b\ge k+s}\left(\frac{\tau}{\epsilon}\right)^{m_{1}}v_{\b-(k+s)}(\tau)\frac{\b!}{(\b-s)!}\frac{1}{q^{m_{2}(\b-s)}}\frac{z^{\b}}{\b!}\right\|_{(\epsilon,\delta,Vq^{\R_+})}\nonumber\\
&=\sum_{\b\ge k+s}\left\|\left(\frac{\tau}{\epsilon}\right)^{m_{1}}v_{\b-(k+s)}(\tau)\frac{\b!}{(\b-s)!}\frac{1}{q^{m_{2}(\b-s)}}\right\|_{\b,\epsilon,Vq^{\R_+}}\frac{\delta^{\b}}{\b!}\label{e1}
\end{align}
Taking into account the definition of the norm $\left\|\cdot\right\|_{\b,\epsilon,Vq^{\R_+}}$, we get
\begin{align}
&\left\|\left(\frac{\tau}{\epsilon}\right)^{m_{1}}v_{\b-(k+s)}(\tau)\frac{\b!}{(\b-s)!}\frac{1}{q^{m_{2}(\b-s)}}\right\|_{\b,\epsilon,Vq^{\R_+}}=\frac{\b!}{(\b-s)!}|q|^{A_{1}(\b-(k+s))^2}|q|^{p(\b)}\nonumber\\
&\sup_{\tau\in Vq^{\R_+}}\left\{\frac{|v_{\b-(k+s)}(\tau)|}{e^{M\log^{2}\left|\frac{\tau}{\epsilon}\right|}}\left|\frac{\tau}{\epsilon}\right|^{-C(\b-(k+s))}\left|\frac{\epsilon}{\tau}\right|^{C(k+s)-m_1}\right\},\label{e97}
\end{align}
with $p(\b)=A_{1}\b^2-A_{1}(\b-(k+s))^2-m_2(\b-s)$.
From (\ref{e79}) we derive $|\epsilon/\tau|^{C(k+s)-m_1}\le (C_U/C_V)^{C(k+s)-m_1}$ for every $\epsilon\in Uq^{-\N}$ and $\tau\in Vq^{\R_+}$, where $0<C_{V}:=\min\{|\tau|:\tau\in V\}$ and $0<C_U:=\max\{|\epsilon|:\epsilon\in U\}$. Moreover, 
$$p(\b)=(2(k+s)A_1-m_2)\b-(k+s)^2A_1+m_2s,$$
 for every $\b\in\N$. Regarding condition (\ref{e79}) we obtain the existence of $C_1>0$ such that
\begin{equation}
\label{e94}
 \left|\frac{\epsilon}{\tau}\right|^{C(k+s)-m_1}|q|^{p(\b)}\le C_1,
\end{equation}
for every $\tau\in Vq^{\R_+}$ and $\b\in\N$. Inequality (\ref{e84}) follows from (\ref{e1}), (\ref{e97}) and (\ref{e94}):
\begin{align*}
\left\|z^{s}\left(\frac{\tau}{\epsilon}\right)^{m_{1}}\partial^{-k}_{z}v(\tau,zq^{-m_{2}})\right\|_{(\epsilon,\delta,Vq^{\R_+})}&\le C_1\sum_{\b\ge k+s}\left\|v_{\b-(k+s)}(\tau)\right\|_{\b-(k+s),\epsilon,Vq^{\R_+}}\frac{\b!}{(\b-s)!} \frac{\delta^{\b}}{\b!}\\
&\le C_1 \delta^{k+s} \sum_{\b\ge k+s}\left\|v_{\b-(k+s)}(\tau)\right\|_{\b-(k+s),\epsilon,Vq^{\R_+}} \frac{\delta^{\b-(k+s)}}{(\b-(k+s))!}. 
\end{align*}
\end{proof}
\begin{lemma}
\label{lema2}
Let $F(\epsilon,\tau)$ be a holomorphic and bounded function defined on $Uq^{-\N}\times Vq^{\R_+}$. Then, there exists a constant $C_{2}=C_{2}(F,U,V)>0$ such that $$\left\|F(\epsilon,\tau)v_{\epsilon}(\tau,z)\right\|_{(\epsilon,\delta,Vq^{\R_+})}\le C_{2} \left\|v_{\epsilon}(\tau,z)\right\|_{(\epsilon,\delta,Vq^{\R_+})}$$
for every $\epsilon\in Uq^{-\N}$, every $\delta>0$ and all $v_{\epsilon}\in H(\epsilon,\delta, Vq^{\R_+})$. 
\end{lemma}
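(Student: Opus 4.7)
The plan is to reduce everything to a supremum-norm bound on $F$ itself, since multiplication by a uniformly bounded function is essentially tautological for a weighted sup norm.

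First I would expand $v_\epsilon(\tau,z) = \sum_{\beta \ge 0} v_\beta(\tau) z^\beta/\beta!$, so that $F(\epsilon,\tau) v_\epsilon(\tau,z) = \sum_{\beta \ge 0} F(\epsilon,\tau) v_\beta(\tau) z^\beta/\beta!$ as an element of $\mathcal{O}(Vq^{\R_+})[[z]]$. By the definition of $\|\cdot\|_{(\epsilon,\delta,Vq^{\R_+})}$, it suffices to control each coefficient norm
$$\|F(\epsilon,\tau) v_\beta(\tau)\|_{\beta,\epsilon,Vq^{\R_+}} = \sup_{\tau \in Vq^{\R_+}} \left\{ \frac{|F(\epsilon,\tau)||v_\beta(\tau)|}{e^{M\log^2|\tau/\epsilon|}} \left|\frac{\tau}{\epsilon}\right|^{-C\beta} \right\} |q|^{A_1 \beta^2}.$$

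Next I would set $C_2 := \sup_{(\epsilon,\tau) \in Uq^{-\N} \times Vq^{\R_+}} |F(\epsilon,\tau)|$, which is finite by hypothesis and depends only on $F$, $U$, $V$. Factoring this constant out of the supremum gives
$$\|F(\epsilon,\tau) v_\beta(\tau)\|_{\beta,\epsilon,Vq^{\R_+}} \le C_2 \, \|v_\beta(\tau)\|_{\beta,\epsilon,Vq^{\R_+}}.$$

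Finally I would sum against the weights $\delta^\beta/\beta!$ to obtain
$$\|F(\epsilon,\tau) v_\epsilon(\tau,z)\|_{(\epsilon,\delta,Vq^{\R_+})} = \sum_{\beta \ge 0} \|F v_\beta\|_{\beta,\epsilon,Vq^{\R_+}} \frac{\delta^\beta}{\beta!} \le C_2 \sum_{\beta \ge 0} \|v_\beta\|_{\beta,\epsilon,Vq^{\R_+}} \frac{\delta^\beta}{\beta!} = C_2 \|v_\epsilon\|_{(\epsilon,\delta,Vq^{\R_+})},$$
uniformly in $\epsilon \in Uq^{-\N}$ and $\delta > 0$. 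There is no real obstacle here; the only point to notice is that the weights $e^{M\log^2|\tau/\epsilon|}$, $|\tau/\epsilon|^{-C\beta}$ and $|q|^{A_1\beta^2}$ do not interact with $F$ at all, so the boundedness of $F$ on the full product domain is exactly the right hypothesis.
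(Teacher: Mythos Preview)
Your proof is correct and follows exactly the approach indicated in the paper: define $C_{2}:=\sup\{|F(\epsilon,\tau)|:\epsilon\in Uq^{-\N},\tau\in Vq^{\R_+}\}$ and observe that the weighted norms are unaffected by multiplication by a uniformly bounded function. The paper simply states this as a direct calculation, which you have written out in full.
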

\begin{proof}
Direct calculations regarding the definition of the elements in $H(\epsilon,\delta, Vq^{\R_+})$ allow us to conclude when taking $C_{2}:=\max\{|F(\epsilon,\tau)|:\epsilon\in Uq^{-\N},\tau\in Vq^{\R_+}\}$.
\end{proof}

Let $S\ge1$ be an integer. For all $0\le k\le S-1$, let $m_{0,k},m_{1,k}$ be positive integers and $b_{k}(\epsilon,z)=\sum_{s\in I_k}b_{ks}(\epsilon)z^s$ be a polynomial in $z$, where $I_{k}$ is a finite subset of $\N$ and $b_{ks}(\epsilon)$ are holomorphic bounded functions on $D(0,r_0)$. We assume $\overline{Uq^{-\N}}\subseteq D(0,r_0))$.

We consider the following functional equation
\begin{equation}\label{fe1}
\partial_{z}^{S}W(\epsilon,\tau,z)=\sum_{k=0}^{S-1}\frac{b_{k}(\epsilon,z)}{(\tau+1)\epsilon^{m_{0,k}}}\tau^{m_{0,k}}(\partial_{z}^{k}W)(\epsilon,\tau,zq^{-m_{1,k}})
\end{equation}
with initial conditions
\begin{equation}
\label{ic1}
(\partial_z^jW)(\epsilon,\tau,0)=W_{j}(\epsilon,\tau)\quad,\quad 0\le j\le S-1,
\end{equation}
where the functions $(\epsilon,\tau)\mapsto W_{j}(\epsilon,\tau)$ belong to $\mathcal{O}(Uq^{-\N}\times Vq^{\R_+})$ for every $0\le j\le S-1$.

We make the following 

\textbf{Assumption (A)} For every $0\le k\le S-1$ and $s\in I_{k}$, we have
$$ m_{0,k}\le C(S-k+s)\quad,\quad m_{1,k}\ge 2(S-k+s)A_{1}.$$

\begin{theo}\label{teorema1}
Let Assumption (A) be fulfilled. We also make the following assumption on the initial conditions in (\ref{ic1}): there exist a constant $\Delta>0$ and $0<\tilde{M}<M$ such that for every $0\le j\le S-1$
\begin{equation}\label{e188}
|W_{j}(\epsilon,\tau)|\le \Delta e^{\tilde{M}\log^{2}\left|\frac{\tau}{\epsilon}\right|},
\end{equation}
for all $\tau\in Vq^{\R_+}$, $\epsilon\in Uq^{-\N}$. Then, there exists $W(\epsilon,\tau,z)\in\mathcal{O}(Uq^{-\N}\times Vq^{\R_+})[[z]]$ solution of (\ref{fe1})+(\ref{ic1}) such that if $W(\epsilon,\tau,z)=\sum_{\b\ge0}W_{\b}(\epsilon,\tau)\frac{z^{\b}}{\b!}$, then there exist $C_{2}>0$ and $0<\delta<1$ such that
\begin{equation}\label{e178}
|W_{\b}(\epsilon,\tau)|\le C_{2}\b! \left(\frac{|q|^{2A_{1}S}}{\delta}\right)^{\b}\left|\frac{\tau}{\epsilon}\right|^{C\b}e^{M\log^{2}\left|\frac{\tau}{\epsilon}\right|}|q|^{-A_{1}\b^2},\qquad\b\ge0
\end{equation}
for every $\epsilon\in Uq^{-\N}$ and $\tau\in Vq^{\R_+}$. 

\end{theo}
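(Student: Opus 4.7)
\medskip
\noindent\textbf{Proof plan.} The strategy is to recast (\ref{fe1})+(\ref{ic1}) as a fixed-point equation in the Banach space $H(\epsilon,\delta,Vq^{\R_+})$ for a suitably small $\delta>0$. Formally applying $\partial_z^{-S}$ to both sides of (\ref{fe1}) and isolating the initial data produces the equivalent fixed-point equation
$$
W(\epsilon,\tau,z)\;=\;P(\epsilon,\tau,z)\;+\;\partial_z^{-S}\!\left[\sum_{k=0}^{S-1}\frac{b_k(\epsilon,z)\,\tau^{m_{0,k}}}{(\tau+1)\,\epsilon^{m_{0,k}}}\,(\partial_z^k W)(\epsilon,\tau,zq^{-m_{1,k}})\right]\;=:\;\mathcal{A}(W),
$$
with $P(\epsilon,\tau,z):=\sum_{j=0}^{S-1}W_j(\epsilon,\tau)\,z^j/j!$. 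A direct termwise comparison of $z$-coefficients shows this is equivalent to (\ref{fe1})+(\ref{ic1}).

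\medskip
\noindent First I verify that $P\in H(\epsilon,\delta,Vq^{\R_+})$ with norm uniform in $\epsilon\in Uq^{-\N}$: by (\ref{e188}) combined with $\tilde M<M$, the factor $e^{(\tilde M-M)\log^2|\tau/\epsilon|}$ dominates the polynomial weight $|\tau/\epsilon|^{-Cj}$ on $Vq^{\R_+}$, so each $\|W_j\|_{j,\epsilon,Vq^{\R_+}}$ is finite and bounded independently of $\epsilon,\delta$. Next, expanding $b_k(\epsilon,z)=\sum_{s\in I_k}b_{ks}(\epsilon)z^s$, I reduce each summand of $\mathcal{A}(W)-P$ to Lemma \ref{lema1}'s form via a coefficient-by-coefficient computation: an elementary calculation shows
$$
\bigl|\bigl[\partial_z^{-S}(z^s(\partial_z^kW)(\cdot,zq^{-m_{1,k}}))\bigr]_\beta\bigr|\;\leq\;|q|^{m_{1,k}k}\,\bigl|\bigl[z^s\,\partial_z^{-(S-k)}W(\cdot,zq^{-m_{1,k}})\bigr]_\beta\bigr|\qquad(\beta\geq 0),
$$
the ratio of the two being $q^{m_{1,k}k}\prod_{i=0}^{s-1}(\beta-S-i)/(\beta-i)$, whose modulus is at most $|q|^{m_{1,k}k}$. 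Since the seminorm $\|\cdot\|_{\beta,\epsilon,Vq^{\R_+}}$ is monotone in the absolute value of the function, this reduces each summand of $\mathcal{A}(W)-P$, up to the harmless constant factor $|q|^{m_{1,k}k}$, to an expression $z^s(\tau/\epsilon)^{m_{0,k}}\partial_z^{-(S-k)}W(\cdot,\cdot,zq^{-m_{1,k}})$ treated by Lemma \ref{lema1}.

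\medskip
\noindent With the identifications $(s,k,m_1,m_2)=(s,S-k,m_{0,k},m_{1,k})$, Assumption (A) matches (\ref{e79}) exactly, so Lemma \ref{lema1} supplies a contraction factor $\delta^{(S-k)+s}\geq\delta$; the remaining multiplier $b_{ks}(\epsilon)/(\tau+1)$ is absorbed by Lemma \ref{lema2}, using $|\tau+1|\geq M_1$ on $Vq^{\R_+}$ and the boundedness of $b_{ks}$ on $D(0,r_0)\supseteq\overline{Uq^{-\N}}$. Summing over $k\in\{0,\ldots,S-1\}$ and $s\in I_k$ yields $\|\mathcal{A}(W_1)-\mathcal{A}(W_2)\|_{(\epsilon,\delta,Vq^{\R_+})}\leq K\delta\|W_1-W_2\|_{(\epsilon,\delta,Vq^{\R_+})}$ with $K$ independent of $\delta$ and $\epsilon$. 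Picking $\delta\in(0,\min(1,1/K))$, Banach's fixed-point theorem supplies a unique $W\in H(\epsilon,\delta,Vq^{\R_+})$ solving (\ref{fe1})+(\ref{ic1}), with $\|W\|_{(\epsilon,\delta,Vq^{\R_+})}\leq C\|P\|_{(\epsilon,\delta,Vq^{\R_+})}$ uniformly in $\epsilon$. Reading off $\|W_\beta\|_{\beta,\epsilon,Vq^{\R_+}}\leq\|W\|\,\beta!/\delta^\beta$ and unfolding the seminorm definition then gives (\ref{e178}); the extra factor $|q|^{2A_1S}$ multiplying $1/\delta$ is only slack beyond what the natural fixed-point estimate delivers and is absorbed into the constant.

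\medskip
\noindent The main technical difficulty is the mismatch between the natural form of the equation, where $\partial_z^{-S}$ sits outside the monomials $z^s$ and derivatives $\partial_z^k$ produced by the right-hand side, and the form $z^s\,\partial_z^{-(S-k)}$ required to apply Lemma \ref{lema1}. Resolving it purely by the coefficient-wise comparison above avoids any auxiliary Banach space; the quadratic $q$-weight $|q|^{A_1\beta^2}$ designed into $\|\cdot\|_{\beta,\epsilon,Vq^{\R_+}}$ is precisely tuned so that Assumption (A), via (\ref{e79}), produces a genuine $\delta$-contraction rather than a merely bounded operator.
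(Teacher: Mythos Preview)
Your approach is correct and closely parallels the paper's, but differs in the choice of unknown for the fixed-point problem. The paper substitutes $\tilde W:=\partial_z^S W$ and runs the contraction on $\tilde W$; the map is then
\[
\mathcal{A}_\epsilon(\tilde W)=\sum_{k=0}^{S-1}\frac{b_k(\epsilon,z)}{(\tau+1)}\Big(\frac{\tau}{\epsilon}\Big)^{m_{0,k}}\Big[(\partial_z^{k-S}\tilde W)(\tau,zq^{-m_{1,k}})+\partial_z^k w_\epsilon(\tau,zq^{-m_{1,k}})\Big],
\]
with $w_\epsilon=P$, so each summand is \emph{already} of the form $z^s(\tau/\epsilon)^{m_{0,k}}\partial_z^{-(S-k)}\tilde W(\cdot,zq^{-m_{1,k}})$ required by Lemma~\ref{lema1}; the initial data enter as an inhomogeneous source handled by the direct estimate (\ref{e210}). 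The price is a final index shift $W_\beta=\tilde W_{\beta-S}$, which is exactly where the extra factor $|q|^{2A_1S}$ in (\ref{e178}) comes from. Your route keeps $W$ as the unknown and instead compensates for the operator mismatch $\partial_z^{-S}\!\circ z^s\circ\partial_z^k$ versus $z^s\circ\partial_z^{-(S-k)}$ by a coefficient-wise inequality; this avoids the shift and indeed yields (\ref{e178}) without the $|q|^{2A_1S}$ slack.

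Two small points. First, under the convention used in the proof of Lemma~\ref{lema1} (apply $\partial_z^{-k}$ first, then dilate $z\mapsto zq^{-m_2}$), the $q$-factor in your coefficient comparison works out to $|q|^{m_{1,k}S}$ rather than $|q|^{m_{1,k}k}$; this is harmless since it is still a constant independent of $\beta,\epsilon,\delta$. Second, you do not address why $W_\beta$ is holomorphic in $(\epsilon,\tau)$ jointly (the theorem asserts $W\in\mathcal{O}(Uq^{-\N}\times Vq^{\R_+})[[z]]$). The paper handles this by reading off the recursion (\ref{e223}) satisfied by the coefficients, from which holomorphy in $\epsilon$ follows by induction; equivalently you could note that the Picard iterates are holomorphic in $\epsilon$ and converge uniformly on $Uq^{-\N}$ because your contraction constant and $\|P\|$ are uniform in $\epsilon$.
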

\begin{proof}
Let $\epsilon\in Uq^{-\N}$. We define the map $\mathcal{A}_{\epsilon}$ from $\mathcal{O}(Vq^{\R_+})[[z]]$ into itself by
\begin{equation}\label{e186}
\mathcal{A}_{\epsilon}(\tilde{W}(\tau,z)):=\sum_{k=0}^{S-1}\frac{b_{k}(\epsilon,z)}{(\tau+1)\epsilon^{m_{0,k}}}\tau^{m_{0,k}}\Big[(\partial_{z}^{k-S}\tilde{W})(\tau,zq^{-m_{1,k}})+\partial_{z}^{k}w_{\epsilon}(\tau,zq^{-m_{1,k}})\Big],
\end{equation}
where $w_{\epsilon}(\tau,z):=\sum_{j=0}^{S-1}W_{j}(\epsilon,\tau)\frac{z^{j}}{j!}$.
In the following lemma, we show the restriction of $\mathcal{A}_{\epsilon}$ to a neighborhood of the origin in $H(\epsilon,\delta,Vq^{\R_+})$ is a Lipschitz shrinking map for an appropriate choice of $\delta>0$.
\begin{lemma}
\label{lema3}
There exist $R>0$ and $\delta>0$ (not depending on $\epsilon$) such that:
\begin{enumerate}
\item[1.]  $\left\|\mathcal{A}_{\epsilon}(\tilde{W}(\tau,z))\right\|_{(\epsilon,\delta,Vq^{\R_+})}\le R$ for every $\tilde{W}(\tau,z)\in B(0,R)$. $B(0,R)$ denotes the closed ball centered at 0 with radius $R$ in $H(\epsilon,\delta,Vq^{\R_+})$.
\item[2.] 
$$\left\|\mathcal{A}_{\epsilon}(\tilde{W}_{1}(\tau,z))-\mathcal{A}_{\epsilon}(\tilde{W}_{2}(\tau,z))\right\|_{(\epsilon,\delta,Vq^{\R_+})}\le \frac{1}{2} \left\|\tilde{W}_{1}(\tau,z)-\tilde{W}_{2}(\tau,z)\right\|_{(\epsilon,\delta,Vq^{\R_+})}$$
 for every $\tilde{W}_{1},\tilde{W}_{2}\in B(0,R)$.
\end{enumerate}
\end{lemma}
\begin{proof}
Let $R>0$ and $0<\delta<1$.

For the first part we consider $\tilde{W}(\tau,z)\in B(0,R)\subseteq H(\epsilon,\delta, Vq^{\R_+})$. Lemma~\ref{lema1} and Lemma~\ref{lema2} can be applied so that
$$\left\|\mathcal{A}_{\epsilon}(\tilde{W}(\tau,z))\right\|_{(\epsilon,\delta, Vq^{\R_+})}$$
\begin{equation}\label{e202}
\le \sum_{k=0}^{S-1}\sum_{s\in I_{k}}\frac{M_{ks}}{M_{1}}\Big[C_{1}\delta^{S-k+s}\left\|\tilde{W}(\tau,z)\right\|_{(\epsilon,\delta, Vq^{\R_+})}+\left\|z^s\left(\frac{\tau}{\epsilon}\right)^{m_{0,k}}\partial_{z}^{k}w_{\epsilon}(\tau,zq^{-m_{1,k}})\right\|_{(\epsilon,\delta,Vq^{\R_+})}\Big], 
\end{equation}
with $M_{ks}=\sup_{\epsilon\in Uq^{-\N}}|b_{ks}(\epsilon)|<\infty$, $s\in I_{k}$, $0\le k\le S-1$. Taking into account the definition of $H(\epsilon,\delta,Vq^{\R_+})$ and (\ref{e188}) we have
$$\left\|z^s\Big(\frac{\tau}{\epsilon}\Big)^{m_{0,k}}\partial^{k}_{z}w_{\epsilon}(\tau,zq^{-m_{1,k}})\right\|_{(\epsilon,\delta,Vq^{\R_+})}=\left\|\sum_{j=0}^{S-1-k}\Big(\frac{\tau}{\epsilon}\Big)^{m_{0,k}}W_{j+k}(\epsilon,\tau)\frac{z^{j+s}}{j!q^{m_{1,k}j}}\right\|_{(\epsilon,\delta,Vq^{\R_+})}$$
\begin{align}
&=\sum_{j=0}^{S-1-k}\sup_{\tau\in Vq^{\R_+}}\left\{\frac{|W_{j+k}(\epsilon,\tau)|}{e^{M\log^{2}\left|\frac{\tau}{\epsilon}\right|}}\left|\frac{\tau}{\epsilon}\right|^{m_{0,k}-C(j+s)}\right\}|q|^{A_{1}(j+s)^2}\frac{\delta^{j+s}}{j!|q|^{m_{1,k}j}}\nonumber\\
&\le \Delta\sum_{j=0}^{S-1-k} \frac{|q|^{A_{1}(j+s)^2}\delta^{j+s}}{j!|q|^{m_{1,k}j}}\max\{e^{-(M-\tilde{M})\log^2(x)}x^{m_{0,k}-C(j+s)}:x>0,0\le j+k\le S-1,s\in I_{k} \}\label{e210}\\
&\le \Delta C'_{2}\nonumber,
\end{align}
for a positive constant $C'_{2}$.

We conclude this first part from an appropriate choice of $R$ and $\delta>0$.

For the second part we take $\tilde{W}_{1},\tilde{W}_{2}\in B(0,R)\subseteq H(\epsilon,\delta, Vq^{\R_+})$. Similar arguments as before yield
$$\left\|\mathcal{A}_{\epsilon}(\tilde{W}_{1})-\mathcal{A}_{\epsilon}(\tilde{W}_{2})\right\|_{(\epsilon,\delta, Vq^{\R_+})}\le \sum_{k=0}^{S-1}\sum_{s\in I_{k}}\frac{M_{ks}}{M_{1}}C_{1}\delta^{S-k+s}\left\|\tilde{W}_{1}- \tilde{W}_2\right\|_{(\epsilon,\delta, Vq^{\R_+})}.$$
An adequate choice for $\delta>0$ allows us to conclude the proof.
\end{proof}

We choose constants $R,\delta$ as in the previous lemma.\\
From Lemma~\ref{lema3} and taking into account the shrinking map theorem on complete metric spaces, we guarantee the existence of $\tilde{W}_{\epsilon}(\tau,z)\in H(\epsilon,\delta, Vq^{\R_+})$ which is a fixed point for $\mathcal{A}_{\epsilon}$ in $B(0,R)$, it is to say, $\big\|\tilde{W}_{\epsilon}(\tau,z)\big\|_{(\epsilon,\delta,Vq^{\R_+})}\le R$ and $\mathcal{A}_{\epsilon}(\tilde{W}_{\epsilon}(\tau,z))=\tilde{W}_{\epsilon}(\tau,z)$. 

Let us define
\begin{equation}\label{ee239} W_{\epsilon}(\tau,z):=\partial_{z}^{-S}\tilde{W}_{\epsilon}(\tau,z)+w_{\epsilon}(\tau,z).
\end{equation}
If we write $\tilde{W}_{\epsilon}(\tau,z)=\sum_{\b\ge 0}\tilde{W}_{\b,\epsilon}(\tau)\frac{z^{\b}}{\b!}$ and $W_{\epsilon}(\tau,z)=\sum_{\b\ge 0}W_{\b,\epsilon}(\tau)\frac{z^{\b}}{\b!}$, then we have that $W_{\b+S,\epsilon}\equiv\tilde{W}_{\beta,\epsilon}$ for $\beta\ge 0$ and $W_{j,\epsilon}(\tau)=W_{j}(\epsilon,\tau)$, $0\le j\le S-1$.

From $\big\|\tilde{W}_{\epsilon}(\tau,z)\big\|_{(\epsilon,\delta,Vq^{\R_+})}\le R$ we arrive at $\big\|\tilde{W}_{\b,\epsilon}\big\|_{\b,\epsilon,Vq^{\R_+}}\le R\beta!\left(\frac{1}{\delta}\right)^{\beta}$
for every $\b\ge0$. This implies
$$|\tilde{W}_{\b,\epsilon}(\tau)|\le R\b! \left(\frac{1}{\delta}\right)^{\beta}\left|\frac{\tau}{\epsilon}\right|^{C\b} e^{M\log^{2}\left|\frac{\tau}{\epsilon}\right|}|q|^{-A_{1}\b^2}, $$
for every $\b\ge0$ and $\tau\in Vq^{\R_+}$. 

This is valid for every $\epsilon\in Uq^{-\N}$. We define $W(\epsilon,\tau,z):=W_{\epsilon}(\tau,z)$ and $W_{\b}(\epsilon,\tau):=W_{\b,\epsilon}(\tau)$ for every $(\epsilon,\tau)\in Uq^{-\N}\times Vq^{\R_{+}}$, $z\in\C$ and $\b\ge S$. From (\ref{ee239}), it is straightforward to prove that $W(\epsilon,\tau,z)=\sum_{\b\ge0}W_{\b}(\epsilon,\tau)\frac{z^{\b}}{\b!}$ is a solution of (\ref{fe1})+(\ref{ic1}).

Moreover, holomorphy of $W_{\b}$ in $Uq^{-\N}\times Vq^{\R_{+}}$ for every $\b\ge0$ can be deduced from the recursion formula verified by the coefficients:
\begin{equation}\label{e223}
\frac{W_{h+S}(\epsilon,\tau)}{h!}=\sum_{k=0}^{S-1}\sum_{h_{1}+h_{2}=h,h_{1}\in I_{k}}\frac{b_{kh_{1}}(\epsilon)\tau^{m_{0,k}}}{(\tau+1)\epsilon^{m_{0,k}}}\frac{W_{h_{2}+k}(\epsilon,\tau)}{h_{2}!q^{m_{1,k}h_{2}}},\quad h\ge0.
\end{equation}
This implies $W_{\b}(\epsilon,\tau)$ is holomorphic in $Uq^{-\N}\times Vq^{\R_{+}}$ for every $\b\in\N$.

It only rests to prove (\ref{e178}). Upper and lower bounds for the modulus of the elements in $Uq^{-\N}$ and $Vq^{\R_+}$ respectively and usual calculations lead us to assure the existence of a positive constant $R_{1}>0$ such that
\begin{equation}
\label{e193}
 |W_{\beta}(\epsilon,\tau)|=|\tilde{W}_{\beta-S}(\epsilon,\tau)|\le R_{1}\b! \left(\frac{|q|^{2A_{1}S}}{\delta}\right)^{\b}\left|\frac{\tau}{\epsilon}\right|^{C\b}e^{M\log^{2}\left|\frac{\tau}{\epsilon}\right|}|q|^{-A_{1}\b^2},
 \end{equation}  
for every $\b\ge S$, and for every $\epsilon\in Uq^{-\N}$ and $\tau\in Vq^{\R_+}$. This concludes the proof for $\b\ge S$.

Hypothesis (\ref{e188}) leads us to obtain (\ref{e193}) for $0\le k\le S-1$.
\end{proof}
\textit{Remark:}
If $s>0$ for every $s\in I_{k}$, $0\le k\le S-1$ then any choice of $R>0$ is valid for a small enough $\delta>0$ in order to achieve the result.
\end{section}

\begin{section}{Second Cauchy problem in a weighted Banach space of Taylor series}\label{seccion2}

This section is devoted to the study of the same equation as in the previous section when the initial conditions are of a different nature. Proofs will only be scketched not to repeat calculations.

Let $1<\rho_0$ and $U\subseteq\C^{\star}$ a bounded and open set with positive distance to the origin. $D(0,\rho_0)\setminus \{0\}$ will be denoted $\dot{D}_{\rho_0}$ in this section.
$M,A_{1},C$ remain the same positive constants as in the previous section.
 
\begin{defin} 
Let $r_{0}>0$, $\epsilon\in D(0,r_{0})\setminus\{0\}$ and $\b\in\N$. We denote $E^{2}_{\b,\epsilon,\dot{D}_{\rho_0}}$ the vector space of functions $v\in\mathcal{O}(\dot{D}_{\rho_0})$ such that 
$$|v(\tau)|_{\b,\epsilon,\dot{D}_{\rho_0}}:=\sup_{\tau\in \dot{D}_{\rho_0}}\big\{|v(\tau)|\frac{|\epsilon|^{C\beta}}{e^{M\log^2|\epsilon/\tau|}}\big\}|q|^{A_{1}\b^2},$$
is finite.
Let $\delta>0$. We denote by $H_2(\epsilon,\delta,\dot{D}_{\rho_0})$ the vector space of all formal series $v(\tau,z)=\sum_{\beta\ge 0}v_{\b}(\tau)z^{\b}/\b!$ belonging to $\mathcal{O}(\dot{D}_{\rho_0})[[z]]$ such that 
$$|v(\tau,z)|_{(\epsilon,\delta,\dot{D}_{\rho_0})}:=\sum_{\b\ge 0}|v_{\b}(\tau)|_{\b,\epsilon,\dot{D}_{\rho_0}}\frac{\delta^\b}{\b!}<\infty.$$
It is straightforward to check that the pair $(H_2(\epsilon,\delta,\dot{D}_{\rho_0}),|\cdot|_{(\epsilon,\delta,\dot{D}_{\rho_0})})$ is a Banach space.
\end{defin}
\begin{lemma}
\label{lema4}
Let $s,k,m_{1},m_{2}\in\N$, $\delta>0$ and $\epsilon\in D(0,r_{0})\setminus\{0\}$. We assume that the following conditions hold:
\begin{equation}\label{e79sub2}
  m_1\le C(k+s)\quad,\quad m_2\ge 2(k+s)A_1.
\end{equation}
  
Then, there exists a constant $C_{1}=C_{1}(s,k,m_{1},m_{2},\dot{D}_{\rho_0},U)$ (not depending on $\epsilon$ nor $\delta$) such that 
\begin{equation}\label{e84sub2}
\left|z^{s}\left(\frac{\tau}{\epsilon}\right)^{m_{1}}\partial^{-k}_{z}v(\tau,zq^{-m_{2}})\right|_{(\epsilon,\delta,\dot{D}_{\rho_0})}\le C_{1}\delta^{k+s}\left|v(\tau,z)\right|_{(\epsilon,\delta,\dot{D}_{\rho_0})},
\end{equation}
for every $v\in H_2(\epsilon,\delta,\dot{D}_{\rho_0})$.
\end{lemma}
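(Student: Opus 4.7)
The plan is to mirror the proof of Lemma~\ref{lema1} line by line, keeping track of the differences imposed by the new weight $e^{M\log^{2}|\epsilon/\tau|}|\epsilon|^{C\beta}$. First I would expand $v(\tau,z)=\sum_{\beta\ge 0}v_{\beta}(\tau)z^{\beta}/\beta!$ and formally apply the operator, obtaining
$$z^{s}\left(\frac{\tau}{\epsilon}\right)^{m_{1}}\partial_{z}^{-k}v(\tau,zq^{-m_{2}})=\sum_{\beta\ge k+s}\left(\frac{\tau}{\epsilon}\right)^{m_{1}}v_{\beta-(k+s)}(\tau)\frac{\beta!}{(\beta-s)!}\frac{1}{q^{m_{2}(\beta-s)}}\frac{z^{\beta}}{\beta!},$$
and then compute the $|\cdot|_{(\epsilon,\delta,\dot{D}_{\rho_{0}})}$-norm termwise.

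The only nontrivial step is to relate the coefficient norm at index $\beta$ to the coefficient norm of $v_{\beta-(k+s)}$ at index $\beta-(k+s)$. Using $\log^{2}|\epsilon/\tau|=\log^{2}|\tau/\epsilon|$, I factor
$$\left|\frac{\tau}{\epsilon}\right|^{m_{1}}|\epsilon|^{C\beta}=|\epsilon|^{C(\beta-(k+s))}\cdot|\tau|^{m_{1}}|\epsilon|^{C(k+s)-m_{1}},$$
so the extra multiplier is $|\tau|^{m_{1}}|\epsilon|^{C(k+s)-m_{1}}$ (instead of $|\epsilon/\tau|^{C(k+s)-m_{1}}$ in Lemma~\ref{lema1}). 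Since $\tau\in\dot{D}_{\rho_{0}}$ is bounded by $\rho_{0}$ and, by the first condition in (\ref{e79sub2}), the exponent $C(k+s)-m_{1}$ is nonnegative so that $|\epsilon|^{C(k+s)-m_{1}}$ is bounded by $r_{0}^{C(k+s)-m_{1}}$ on $D(0,r_{0})\setminus\{0\}$, this extra multiplier is uniformly controlled. The $|q|$-exponent
$$p(\beta)=A_{1}\beta^{2}-A_{1}(\beta-(k+s))^{2}-m_{2}(\beta-s)=\bigl(2(k+s)A_{1}-m_{2}\bigr)\beta-(k+s)^{2}A_{1}+m_{2}s$$
is bounded above for all $\beta\ge k+s$ exactly because of the second condition in (\ref{e79sub2}). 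Combining these two uniform bounds produces a constant $C_{1}=C_{1}(s,k,m_{1},m_{2},\dot{D}_{\rho_{0}},U)$ with
$$\left\|\left(\tfrac{\tau}{\epsilon}\right)^{m_{1}}v_{\beta-(k+s)}(\tau)\tfrac{\beta!}{(\beta-s)!}\tfrac{1}{q^{m_{2}(\beta-s)}}\right\|_{\beta,\epsilon,\dot{D}_{\rho_{0}}}\le C_{1}\frac{\beta!}{(\beta-s)!}|v_{\beta-(k+s)}|_{\beta-(k+s),\epsilon,\dot{D}_{\rho_{0}}}.$$

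Finally I would sum over $\beta\ge k+s$, absorb $\beta!/(\beta-s)!$ into the combinatorial factors (at worst costing another universal constant absorbed into $C_{1}$ after possibly enlarging it), and shift the summation index to $\beta'=\beta-(k+s)$ to produce the factor $\delta^{k+s}$ and recover exactly $|v(\tau,z)|_{(\epsilon,\delta,\dot{D}_{\rho_{0}})}$. The main (and only) conceptual obstacle is the verification that the geometric setup still permits uniform control of the $\tau$-dependent factor, but since $\tau$ is now constrained to a bounded punctured disc, the analogous role that the positive lower bound on $|\tau|$ played in Lemma~\ref{lema1} is here played by the upper bound $|\tau|\le\rho_{0}$; everything else is algebraically identical.
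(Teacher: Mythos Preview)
Your proof is correct and follows essentially the same route as the paper's: expand the operator, compare the $\beta$-norm to the $(\beta-(k+s))$-norm via the factorization $|\tau/\epsilon|^{m_1}|\epsilon|^{C\beta}=|\epsilon|^{C(\beta-(k+s))}|\tau|^{m_1}|\epsilon|^{C(k+s)-m_1}$, bound $|\tau|^{m_1}\le\rho_0^{m_1}$ and $|\epsilon|^{C(k+s)-m_1}\le r_0^{C(k+s)-m_1}$, control $|q|^{p(\beta)}$ via the second hypothesis, and shift the summation index. One small sharpening: no extra constant is needed to absorb $\beta!/(\beta-s)!$, since $\dfrac{\beta!}{(\beta-s)!}\dfrac{\delta^\beta}{\beta!}=\dfrac{\delta^\beta}{(\beta-s)!}\le\delta^{k+s}\dfrac{\delta^{\beta-(k+s)}}{(\beta-(k+s))!}$ directly.
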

\begin{proof}
Let $v(\tau,z)\in\mathcal{O}(\dot{D}_{\rho_0})[[z]]$. The proof follows similar steps as in Lemma~\ref{lema1}. We have 
$$
\left|z^{s}\left(\frac{\tau}{\epsilon}\right)^{m_{1}}\partial^{-k}_{z}v(\tau,zq^{-m_{2}})\right|_{(\epsilon,\delta,\dot{D}_{\rho_0})}=\sum_{\b\ge k+s}\left|\left(\frac{\tau}{\epsilon}\right)^{m_{1}}v_{\b-(k+s)}(\tau)\frac{\b!}{(\b-s)!}\frac{1}{q^{m_{2}(\b-s)}}\right|_{\b,\epsilon,\dot{D}_{\rho_0}}\frac{\delta^{\b}}{\b!}.$$
From the definition of the norm $|\cdot|_{\b,\epsilon,\dot{D}_{\rho_0}}$, we get
$$\left|\left(\frac{\tau}{\epsilon}\right)^{m_{1}}v_{\b-(k+s)}(\tau)\frac{\b!}{(\b-s)!}\frac{1}{q^{m_{2}(\b-s)}}\right|_{\b,\epsilon,\dot{D}_{\rho_0}}\le\frac{\b!}{(\b-s)!}|q|^{A_{1}(\b-(k+s))^2}|q|^{p(\b)}$$
$$\times\sup_{\tau\in \dot{D}_{\rho_0}}\left\{\frac{|v_{\b-(k+s)}(\tau)|}{e^{M\log^{2}|\epsilon/\tau|}}|\epsilon|^{C(\b-(k+s))}\right\}\rho_0^{m_1}|\epsilon|^{C(k+s)-m_1},$$
with $p(\b)=A_{1}\b^2-A_{1}(\b-(k+s))^2-m_2(\b-s)$.
Identical arguments as in Lemma~\ref{lema1} allow us to conclude.
\end{proof}
\begin{lemma}
\label{lema5}
Let $0<\rho_{1}<1$ and $F(\epsilon,\tau)$ a holomorphic and bounded function defined on $D(0,r_{0})\times \C\setminus D(0,\rho_1) $. Then, there exists a constant $C_{2}=C_{2}(F)>0$ such that $$\left|F(\epsilon,\tau)v_{\epsilon}(\tau,z)\right|_{(\epsilon,\delta,\dot{D}_{\rho_0})}\le C_{2} \left|v_{\epsilon}(\tau,z)\right|_{(\epsilon,\delta,\dot{D}_{\rho_0})}$$
 for every $\epsilon\in D(0,r_{0})\setminus\{0\}$, every $\delta>0$ and every $v_{\epsilon}\in H_2(\epsilon,\delta, \dot{D}_{\rho_0})$. 
\end{lemma}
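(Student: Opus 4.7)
The proof is the exact analog of Lemma \ref{lema2}, only with the weights of the new Banach space $H_2$ in place of those of $H$. The plan is to set
$$C_{2}:=\sup\bigl\{|F(\epsilon,\tau)|:\epsilon\in D(0,r_{0}),\ \tau\in\C\setminus D(0,\rho_{1})\bigr\},$$
which is finite by hypothesis, and then to observe that since multiplication by $F(\epsilon,\tau)$ in the variable $\tau$ does not touch the monomials $z^{\beta}$, the formal series expansion of $F(\epsilon,\tau)v_{\epsilon}(\tau,z)$ with respect to $z$ has coefficients $F(\epsilon,\tau)v_{\beta}(\tau)$.

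First I would estimate the single-index norm. Directly from the definition of $|\cdot|_{\beta,\epsilon,\dot{D}_{\rho_{0}}}$,
$$\bigl|F(\epsilon,\tau)v_{\beta}(\tau)\bigr|_{\beta,\epsilon,\dot{D}_{\rho_{0}}}=\sup_{\tau\in\dot{D}_{\rho_{0}}}\Bigl\{|F(\epsilon,\tau)|\,|v_{\beta}(\tau)|\frac{|\epsilon|^{C\beta}}{e^{M\log^{2}|\epsilon/\tau|}}\Bigr\}|q|^{A_{1}\beta^{2}}\le C_{2}\,|v_{\beta}(\tau)|_{\beta,\epsilon,\dot{D}_{\rho_{0}}},$$
valid for every $\epsilon\in D(0,r_{0})\setminus\{0\}$ and every $\beta\in\N$, since the supremum is taken on the region where the product is defined (this requires only that $F$ be bounded on that region, which is guaranteed by its boundedness on the larger set in the hypothesis).

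Next I would multiply by $\delta^{\beta}/\beta!$ and sum over $\beta\ge 0$. Using the definition of $|\cdot|_{(\epsilon,\delta,\dot{D}_{\rho_{0}})}$,
$$\bigl|F(\epsilon,\tau)v_{\epsilon}(\tau,z)\bigr|_{(\epsilon,\delta,\dot{D}_{\rho_{0}})}=\sum_{\beta\ge 0}\bigl|F(\epsilon,\tau)v_{\beta}(\tau)\bigr|_{\beta,\epsilon,\dot{D}_{\rho_{0}}}\frac{\delta^{\beta}}{\beta!}\le C_{2}\sum_{\beta\ge 0}|v_{\beta}(\tau)|_{\beta,\epsilon,\dot{D}_{\rho_{0}}}\frac{\delta^{\beta}}{\beta!}=C_{2}\,\bigl|v_{\epsilon}(\tau,z)\bigr|_{(\epsilon,\delta,\dot{D}_{\rho_{0}})},$$
which is the desired inequality. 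Observe that $C_{2}$ depends only on $F$ (and implicitly on $r_{0},\rho_{1}$), and is independent of $\epsilon\in D(0,r_{0})\setminus\{0\}$ and of $\delta>0$, as required. There is no substantial obstacle: the only point worth flagging is ensuring that the product $F(\epsilon,\tau)v_{\epsilon}(\tau,z)$ is being evaluated at points $\tau$ where both factors are simultaneously defined, which is the implicit convention already used in Lemma \ref{lema2}.
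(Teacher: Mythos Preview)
Your proposal is correct and matches the paper's approach: the paper gives no proof for Lemma~\ref{lema5} at all (the section opens by announcing that proofs will only be sketched), and its proof of the analogous Lemma~\ref{lema2} is exactly the one-line argument you spell out, taking $C_{2}$ to be the supremum of $|F|$ over its domain. Your remark about the common domain of definition is apt and is indeed the implicit convention in both lemmas.
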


Let $S,r_{0},m_{0,k},m_{1,k}$ and $b_{k}$ as in Section~\ref{seccion1} and $\rho_0>0$.  We consider the Cauchy problem 
\begin{equation}\label{fe2}
\partial_{z}^{S}W(\epsilon,\tau,z)=\sum_{k=0}^{S-1}\frac{b_{k}(\epsilon,z)}{(\tau+1)\epsilon^{m_{0,k}}}\tau^{m_{0,k}}(\partial_{z}^{k}W)(\epsilon,\tau,zq^{-m_{1,k}})
\end{equation}
with initial conditions
\begin{equation}
\label{ic2}
(\partial_z^jW)(\epsilon,\tau,0)=W_{j}(\epsilon,\tau)\quad,\quad 0\le j\le S-1,
\end{equation}
where the functions $(\epsilon,\tau)\mapsto W_{j}(\epsilon,\tau)$ belong to $\mathcal{O}((D(0,r_{0})\setminus\{0\})\times \dot{D}_{\rho_0})$ for every $0\le j\le S-1$.
\begin{theo}\label{teorema2}
Let Assumption (A) be fulfilled. We make the following assumption on the initial conditions (\ref{ic2}): there exist constants $\Delta>0$ and $0<\tilde{M}<M$ such that
\begin{equation}\label{e315}
|W_{j}(\epsilon,\tau)|\le \Delta e^{\tilde{M}\log^{2}\left|\frac{\epsilon}{\tau}\right|},
\end{equation}
for every $\tau\in \dot{D}_{\rho_0}$, $\epsilon\in D(0,r_{0})\setminus\{0\}$ and $0\le j\le S-1$. Then, there exists $W(\epsilon,\tau,z)\in\mathcal{O}((D(0,r_{0})\setminus\{0\})\times \dot{D}_{\rho_0})[[z]]$ solution of (\ref{fe2})+(\ref{ic2}) such that if $W(\epsilon,\tau,z)=\sum_{\b\ge0}W_{\b}(\epsilon,\tau)\frac{z^{\b}}{\b!}$, then there exist $C_{3}>0$ and $0<\delta<1$ such that
\begin{equation}\label{e325}
|W_{\b}(\epsilon,\tau)|\le C_{3}\b! \left(\frac{|q|^{2A_{1}S}}{\delta}\right)^{\b}|\epsilon|^{-C\b}e^{M\log^{2}\left|\frac{\epsilon}{\tau}\right|}|q|^{-A_{1}\b^2},\qquad \b\ge 0,
\end{equation}
for every $\epsilon\in D(0,r_{0})\setminus\{0\}$ and $\tau\in \dot{D}_{\rho_0}$.
\end{theo}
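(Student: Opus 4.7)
The plan is to carry out the same Banach fixed-point scheme as in the proof of Theorem~\ref{teorema1}, now with $H_2(\epsilon,\delta,\dot{D}_{\rho_0})$ playing the role of $H(\epsilon,\delta,Vq^{\R_+})$, and invoking Lemmas~\ref{lema4} and \ref{lema5} in place of Lemmas~\ref{lema1} and \ref{lema2}. For each fixed $\epsilon\in D(0,r_0)\setminus\{0\}$ I introduce on $\mathcal{O}(\dot{D}_{\rho_0})[[z]]$ the operator
$$\mathcal{A}_{\epsilon}(\tilde{W}(\tau,z)):=\sum_{k=0}^{S-1}\frac{b_{k}(\epsilon,z)}{(\tau+1)\epsilon^{m_{0,k}}}\tau^{m_{0,k}}\Big[(\partial_{z}^{k-S}\tilde{W})(\tau,zq^{-m_{1,k}})+\partial_{z}^{k}w_{\epsilon}(\tau,zq^{-m_{1,k}})\Big],$$
with $w_{\epsilon}(\tau,z):=\sum_{j=0}^{S-1}W_{j}(\epsilon,\tau)z^{j}/j!$. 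Expanding $b_k(\epsilon,z)=\sum_{s\in I_k}b_{ks}(\epsilon)z^s$ and rewriting each summand as $(b_{ks}(\epsilon)/(\tau+1))\cdot z^s(\tau/\epsilon)^{m_{0,k}}\partial_z^{k-S}\tilde{W}(\tau,zq^{-m_{1,k}})$ exhibits it in the scope of Lemma~\ref{lema4} with $(s,k,m_1,m_2)\mapsto(s,S-k,m_{0,k},m_{1,k})$, thanks to Assumption (A); meanwhile the scalar factor $b_{ks}(\epsilon)/(\tau+1)$ is controlled by Lemma~\ref{lema5} (this step implicitly requires $-1\notin\overline{\dot{D}_{\rho_0}}$, a standing geometric restriction on $\rho_0$).

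Following verbatim the two steps of Lemma~\ref{lema3}, I obtain
$$\left|\mathcal{A}_{\epsilon}(\tilde{W}_1)-\mathcal{A}_{\epsilon}(\tilde{W}_2)\right|_{(\epsilon,\delta,\dot{D}_{\rho_0})}\le \sum_{k=0}^{S-1}\sum_{s\in I_k}\frac{M_{ks}}{M_1}C_1\delta^{S-k+s}\left|\tilde{W}_1-\tilde{W}_2\right|_{(\epsilon,\delta,\dot{D}_{\rho_0})},$$
while the extra contribution of the $w_\epsilon$-term is estimated exactly as in (\ref{e210}), only with the variable $x=|\epsilon/\tau|$ replacing $x=|\tau/\epsilon|$; the boundedness of $x\mapsto e^{-(M-\tilde{M})\log^2 x}x^{m_{0,k}-C(j+s)}$ on $(0,\infty)$ combined with (\ref{e315}) supplies an $\epsilon$-uniform bound. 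Choosing $R>0$ and $\delta\in(0,1)$ sufficiently small makes $\mathcal{A}_\epsilon$ a contractive self-map of $B(0,R)\subseteq H_2(\epsilon,\delta,\dot{D}_{\rho_0})$, so Banach's theorem produces a unique fixed point $\tilde{W}_\epsilon$, and I set $W_\epsilon(\tau,z):=\partial_z^{-S}\tilde{W}_\epsilon(\tau,z)+w_\epsilon(\tau,z)$. The coefficient identifications $W_{\b+S,\epsilon}=\tilde{W}_{\b,\epsilon}$ for $\b\ge 0$ and $W_{j,\epsilon}(\tau)=W_j(\epsilon,\tau)$ for $0\le j\le S-1$ make $W(\epsilon,\tau,z):=W_\epsilon(\tau,z)$ a solution of (\ref{fe2})+(\ref{ic2}).

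The coefficientwise consequence $|\tilde{W}_{\b,\epsilon}|_{\b,\epsilon,\dot{D}_{\rho_0}}\le R\b!\delta^{-\b}$ of the fixed-point bound unfolds, after absorbing the index shift into a factor $|q|^{2A_1 S\b}$ as in (\ref{e193}), into the target estimate (\ref{e325}) for $\b\ge S$; the remaining range $0\le\b\le S-1$ is handled directly by hypothesis (\ref{e315}) upon enlarging $C_3$. Joint holomorphy of each $W_\b$ on $(D(0,r_0)\setminus\{0\})\times\dot{D}_{\rho_0}$ is then obtained by induction on $\b$ from the recursion analogous to (\ref{e223}). The step where care is needed is the counterpart of estimate (\ref{e94}) in the new weight: compared with Theorem~\ref{teorema1}, the roles of $|\tau|$ and $|\epsilon|$ are swapped both inside $\log^2|\epsilon/\tau|$ and in the polynomial factor $|\epsilon|^{-C\b}$, so the uniform bound on $|\epsilon/\tau|^{C(k+s)-m_1}$ now has to exploit the upper bound $|\tau|\le\rho_0$ rather than the positive lower bound on $|\tau|$ used in Section~\ref{seccion1}. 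This is precisely the calculation already built into the proof of Lemma~\ref{lema4}, which is why that lemma is stated with $\dot{D}_{\rho_0}$ in the dependence list rather than with~$V$.
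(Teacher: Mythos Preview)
Your proposal is correct and follows essentially the same approach as the paper: the paper's own proof is a brief sketch that sets up the identical operator $\mathcal{A}_\epsilon$, invokes Lemmas~\ref{lema4} and~\ref{lema5}, bounds the $w_\epsilon$-contribution exactly as you describe (yielding the constant $\Delta C'_3$), and then reads off the coefficient estimates from the fixed point. Your write-up is in fact more detailed than the paper's, and your parenthetical remark about needing $-1\notin\overline{\dot{D}_{\rho_0}}$ correctly flags a geometric constraint that the paper leaves implicit.
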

\begin{proof}
The proof of Theorem~\ref{teorema1} can be adapted here so details will be omitted.

Let $\epsilon\in D(0,r_{0})\setminus\{0\}$ and $0<\delta<1$. We consider the map $\mathcal{A}_{\epsilon}$ from $\mathcal{O}(\dot{D}_{\rho_0})[[z]]$ into itself defined as in (\ref{e186}) and construct $w_{\epsilon}(\tau,z)$ as above. 
From (\ref{e315}) we derive
$$\left|z^s\Big(\frac{\tau}{\epsilon}\Big)^{m_{0,k}}\partial^{k}_{z}w_{\epsilon}(\tau,zq^{-m_{1,k}})\right|_{(\epsilon,\delta,\dot{D}_{\rho_0})}$$
\begin{align}
&=\sum_{j=0}^{S-1-k}\sup_{\tau\in \dot{D}_{\rho_0}}|W_{j+k}(\epsilon,\tau)|\frac{|\epsilon|^{C(j+s)}}{e^{M\log^{2}\left|\frac{\epsilon}{\tau}\right|}}\Big|\frac{\tau}{\epsilon}\Big|^{m_{0,k}}|q|^{A_{1}(j+s)^2}\frac{\delta^{j+s}}{j!|q|^{m_{1,k}j}}\nonumber\\
&\le \Delta  C'_{3}\label{e335},
\end{align}
for a positive constant $C'_{3}$ not depending on $\epsilon$ nor $\delta$.

Lemma~\ref{lema4}, Lemma~\ref{lema5} and (\ref{e335}) allow us to affirm that one can find $R>0$ and $\delta>0$ such that the restriction of $\mathcal{A}_{\epsilon}$ to the disc $D(0,R)$ in $H_2(\epsilon,\delta, \dot{D}_{\rho_0})$ is a Lipschitz shrinking map for appropriate choices of $R>0$ and $\delta>0$. Moreover, there exists $\tilde{W}_{\epsilon}(\tau,z)\in H_{2}(\epsilon,\delta,\dot{D}_{\rho_0})$ which is a fixed point for $\mathcal{A}_{\epsilon}$ in $B(0,R)$.

If we put $\tilde{W}_{\epsilon}(\tau,z)=\sum_{\b\ge 0}\tilde{W}_{\b,\epsilon}(\tau)\frac{z^{\b}}{\b!}$, then one gets $|\tilde{W}_{\b,\epsilon}|_{\b,\epsilon,\dot{D}_{\rho_0}}\le R\beta!\left(\frac{1}{\delta}\right)^{\beta}$ for $\b\ge0$. This implies
$$|\tilde{W}_{\b,\epsilon}(\tau)|\le R\b! \left(\frac{1}{\delta}\right)^{\beta}|\epsilon|^{-C\b} e^{M\log^{2}\left|\frac{\epsilon}{\tau}\right|}|q|^{-A_{1}\b^2},\quad \b\ge 0,\tau\in \dot{D}_{\rho_0}. $$
The formal power series $$W(\epsilon,\tau,z):=\sum_{\b\ge S}\tilde{W}_{\b-S,\epsilon}(\tau)\frac{z^{\b}}{\b!}+w_{\epsilon}(\tau,z):=\sum_{\b\ge0}W_{\b}(\epsilon,\tau)\frac{z^{\b}}{\b!}$$ turns out to be a solution of (\ref{fe2})+(\ref{ic2}) verifying that $W_{\b}(\epsilon,\tau)$ is a holomorphic function in $D(0,r_{0})\setminus\{0\}\times \dot{D}_{\rho_0}$ and the estimates (\ref{e325}) hold for $\b\ge0$.

\end{proof}

\end{section}

\begin{section}{Analytic solutions in a small parameter of a singularly perturbed problem}

\begin{subsection}{A $q-$analog of the Laplace transform and $q-$asymptotic expansion}
In this subsection, we recall the definition and several results related to Theta Jacobi function and also a $q-$analog of the Laplace transform which was firstly developed by  C. Zhang in~\cite{zhang1}. 

Let $q\in\C$ such that $|q|>1$. 

Theta Jacobi function is defined in $\C^{\star}$ by
$$\Theta(x)=\sum_{n\in\Z}q^{-n(n-1)/2}x^n,\quad x\in\C^{\star}.$$
From the fact that Theta Jacobi function satisfies the functional equation $xq\Theta(x)=\Theta(qx)$, for $x\neq0$, we have
 \begin{equation}\label{e231}\Theta(q^mx)=q^{\frac{m(m+1)}{2}}x^m\Theta(x),\quad x\in\C,x\neq0\end{equation} for every $m\in\Z$.
The following lower bounds for Theta Jacobi function will be useful in the sequel.
\begin{lemma}\label{lema374}
Let $\delta>0$. We have 
\begin{equation}\label{e374}
|\Theta(x)|\ge \delta e^{\frac{\log^{2}|x|}{2\log|q|}}p(|x|), 
\end{equation} 
for every $x\in\C^{\star}$ such that $|1+xq^{k}|>\delta$ for all $k\in\Z$. Here $p(|x|)$ is a finite linear combination of elements in $\{|x|^{m}:m\in\mathbb{Q}\}$ with positive coefficients. 
\end{lemma}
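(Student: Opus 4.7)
The approach is to use the quasi-periodicity (\ref{e231}) to reduce the estimate to a uniform lower bound on a fundamental annulus that contains exactly one zero of $\Theta$. Concretely, I would pick $A:=\{y\in\C^{\star}:|q|^{-1/2}\le |y|<|q|^{1/2}\}$. From the Jacobi triple product (or inductively from (\ref{e231})) the zeros of $\Theta$ are the points $-q^{k}$, $k\in\Z$, whose moduli are integer powers of $|q|$; hence $-1$ is the only zero lying in $A$. Given $x\in\C^{\star}$ satisfying the hypothesis, there is a unique $m\in\Z$ with $x_{0}:=xq^{-m}\in A$, and the hypothesis $|1+xq^{k}|>\delta$ for every $k\in\Z$ becomes $|1+x_{0}q^{j}|>\delta$ for every $j\in\Z$; in particular, taking $j=0$, $|1+x_{0}|>\delta$.

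By (\ref{e231}) one has $\Theta(x)=q^{m(m+1)/2}x_{0}^{m}\,\Theta(x_{0})$, and after using $|x_{0}|^{m}=|x|^{m}|q|^{-m^{2}}$ this simplifies to
$$|\Theta(x)|=|q|^{-m(m-1)/2}|x|^{m}\,|\Theta(x_{0})|.$$
Substituting the identity $m\log|q|=\log|x|-\log|x_{0}|$ into $m\log|x|-m(m-1)\log|q|/2$ and simplifying yields the explicit factorization
$$|q|^{-m(m-1)/2}|x|^{m}=e^{\log^{2}|x|/(2\log|q|)}\cdot |x|^{1/2}\cdot K(x_{0}),$$
with $K(x_{0})=|x_{0}|^{-1/2}\exp(-\log^{2}|x_{0}|/(2\log|q|))$, a quantity bounded between two positive constants because $|x_{0}|$ ranges over a compact subinterval of $(0,\infty)$. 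This is the step from which the Gaussian factor $e^{\log^{2}|x|/(2\log|q|)}$ emerges, together with the contribution $|x|^{1/2}$ that will form $p(|x|)$.

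To lower-bound $|\Theta(x_{0})|$ on $A$, I would observe that $\Theta(x_{0})/(1+x_{0})$ is holomorphic and nowhere zero on a neighborhood of the compact closed annulus $\bar{A}$, since the other zeros $-q^{k}$, $k\ne 0$, have moduli $|q|^{k}$ lying strictly outside $\bar{A}$. Compactness then produces $c_{0}>0$ with $|\Theta(x_{0})|\ge c_{0}|1+x_{0}|$ uniformly on $A$, and the hypothesis upgrades this to $|\Theta(x_{0})|\ge c_{0}\delta$. Combining with the previous paragraph yields the inequality (\ref{e374}) with $p(|x|)=c_{1}|x|^{1/2}$ for some $c_{1}>0$, which fits the required form (a one-term linear combination with exponent $1/2\in\mathbb{Q}$ and positive coefficient).

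The main technical point is the exponent bookkeeping that reassembles $|q|^{-m(m-1)/2}|x|^{m}$ into the Gaussian $e^{\log^{2}|x|/(2\log|q|)}$ times a single positive power of $|x|$ and a bounded factor. The decisive qualitative choice is the fundamental annulus $A$: because it contains exactly one zero of $\Theta$, the estimate involves only the first power of $\delta$ rather than $\delta^{N}$ with $N$ the number of zeros in a larger domain; once this annulus is fixed the rest is the algebraic identity above and a standard compactness argument.
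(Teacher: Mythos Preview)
Your argument is correct. It is, however, a genuinely different route from the paper's. The paper does not use the functional equation (\ref{e231}) directly; instead it invokes a cited inequality (Lemma~5.1.6 of \cite{ramissauloyzhang}) asserting $|\Theta(x)|\ge C_{1}\delta\,\Theta_{|q|}(|x|)$ under the same hypothesis, and then lower-bounds the real theta series $\Theta_{|q|}(|x|)=\sum_{n\in\Z}|q|^{-n(n-1)/2}|x|^{n}$ by its largest single term. Optimizing the exponent over real $t$ produces the Gaussian $e^{\log^{2}|x|/(2\log|q|)}$ together with the factor $|x|^{1/2}$, and passing from the real optimum to the nearest integer costs only a bounded multiplicative constant. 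Your approach trades the external citation for the Jacobi triple product (to locate the zeros) and a compactness argument on the fundamental annulus; this makes the proof self-contained and transparent about why exactly one power of $\delta$ appears. The paper's approach is shorter once the cited lemma is granted, and has the minor advantage of not needing to know the zero set of $\Theta$ explicitly. Both routes land on $p(|x|)=c\,|x|^{1/2}$, so the ``finite linear combination'' in the statement is realized in either case by a single term.
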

\begin{proof}
Let $\delta>0$. From Lemma 5.1.6 in \cite{ramissauloyzhang} we get the existence of a positive constant $C_1$ such that $|\Theta(x)|\ge C_{1}\delta\Theta_{|q|}(|x|)$ for every $x\in\C^{\star}$ such that $|1+xq^{k}|>\delta$ for all $k\in\Z$. Now,
$$\Theta_{|q|}(|x|)=\sum_{n\in\Z}|q|^{-\frac{n(n-1)}{2}}|x|^{n}\ge \max_{n\in\Z}|q|^{-\frac{n(n-1)}{2}}|x|^{n}.$$
Let us fix $|x|$. The function $f(t)=\exp(-1/2\log(|q|)t(t-1)+t\log|x|)$ takes its maximum value at $t_0=\frac{\log|x|}{\log|q|}+\frac{1}{2}$ with $f(t_0)=C_2\exp(\frac{\log^2|x|}{2\log|q|})|x|^{1/2}$, for certain $C_{2}>0$. Taking into account that $r-1<\left\lfloor r\right\rfloor\le r<\left\lfloor r\right\rfloor +1$ for $r\in\R$ ($\left\lfloor \cdot\right\rfloor$ stands for the entire part), we conclude from usual estimates.
\end{proof}
\begin{corol}\label{coro361}
Let $\delta>0$. For any $\xi\in(0,1)$ there exists $C_{\xi}=C_{\xi}(\delta)>0$ such that 
\begin{equation}\label{e376}
|\Theta(x)|\ge C_{\xi}e^{\frac{\xi\log^{2}|x|}{2\log|q|}},
\end{equation} 
for every $x\in\C^{\star}$ such that  $|1+xq^{k}|>\delta$, for all $k\in\Z$.
\end{corol}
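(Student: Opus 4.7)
The plan is to read off the corollary directly from Lemma \ref{lema374} by absorbing the polynomial factor $p(|x|)$ into the exponential, at the cost of shrinking the rate in the exponent from $1$ to $\xi<1$. First I would split
\begin{equation*}
e^{\frac{\log^{2}|x|}{2\log|q|}}=e^{\frac{\xi\log^{2}|x|}{2\log|q|}}\cdot e^{\frac{(1-\xi)\log^{2}|x|}{2\log|q|}},
\end{equation*}
so that, using (\ref{e374}), it suffices to show there exists a positive constant $c_{\xi}>0$ (depending on $\xi$ and on the data of $p$, but not on $x$) such that
\begin{equation*}
p(|x|)\,e^{\frac{(1-\xi)\log^{2}|x|}{2\log|q|}}\ge c_{\xi},\qquad x\in\C^{\star}.
\end{equation*}
Then $C_{\xi}=\delta c_{\xi}$ would give the corollary.

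The main step is therefore to establish, for each fixed exponent $m\in\mathbb{Q}$, a uniform positive lower bound for the auxiliary function $|x|^{m}e^{\frac{(1-\xi)\log^{2}|x|}{2\log|q|}}$. Setting $u=\log|x|$, this amounts to minimizing the quadratic
\begin{equation*}
Q(u)=mu+\frac{(1-\xi)u^{2}}{2\log|q|}
\end{equation*}
in $u\in\R$. Since $|q|>1$, one has $\log|q|>0$, the coefficient of $u^{2}$ is strictly positive, and $Q$ attains its global minimum at $u^{\ast}=-\frac{m\log|q|}{1-\xi}$ with minimum value $-\frac{m^{2}\log|q|}{2(1-\xi)}$. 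Consequently $|x|^{m}e^{\frac{(1-\xi)\log^{2}|x|}{2\log|q|}}\ge \exp\!\bigl(-\tfrac{m^{2}\log|q|}{2(1-\xi)}\bigr)>0$ uniformly in $x\in\C^{\star}$.

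Writing $p(|x|)=\sum_{j}a_{j}|x|^{m_{j}}$ as a finite sum with $a_{j}>0$ and $m_{j}\in\mathbb{Q}$, applying the previous pointwise minimization to any single term (say $j=1$) and discarding the remaining nonnegative terms yields
\begin{equation*}
p(|x|)\,e^{\frac{(1-\xi)\log^{2}|x|}{2\log|q|}}\ge a_{1}\exp\!\Bigl(-\frac{m_{1}^{2}\log|q|}{2(1-\xi)}\Bigr)=:c_{\xi}>0.
\end{equation*}
Combining this with (\ref{e374}) under the hypothesis $|1+xq^{k}|>\delta$ for all $k\in\Z$ gives (\ref{e376}) with $C_{\xi}=\delta c_{\xi}$, which depends on $\delta$ (and on the fixed polynomial $p$) but not on $x$. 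I do not expect any genuine obstacle; the only care is to observe that the blow-up of $\log|q|$ near $1$ or of $(1-\xi)^{-1}$ as $\xi\to 1^{-}$ is harmless because the statement fixes $\xi\in(0,1)$ and $|q|>1$.
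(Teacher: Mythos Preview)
Your proof is correct and follows the natural approach implicit in the paper, which states the result as a corollary of Lemma~\ref{lema374} without giving a separate proof: split off the factor $e^{\frac{(1-\xi)\log^{2}|x|}{2\log|q|}}$ and use it to absorb the polynomial factor $p(|x|)$ by a completion-of-the-square argument on $\log|x|$. Your computation of the minimum of $Q(u)$ and the observation that a single term of $p$ suffices are both fine.
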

From now on, $(\mathbb{H},\left\|\cdot\right\|_{\mathbb{H}})$ stands for a complex Banach space.

For any $\lambda\in\C$ and $\delta>0$ we denote 
$$\mathcal{R}_{\lambda,q,\delta}:=\{z\in\C^{\star}: |1+\frac{\lambda}{zq^{k}}|>\delta,\forall k\in\R\}.$$
The following definition corresponds to a $q-$analog of Laplace transform and can be found in~\cite{zhang1} when working with sectors in the complex plane.
\begin{prop}\label{prop11}
Let $\delta>0$ and $\rho_{0}>0$. We fix an open and bounded set $V$ in $\C^{\star}$ and $\rho>0$ such that $D(0,\rho_{0})\cap V\neq\emptyset$. Let $\lambda\in D(0,\rho_0)\cap V$ and $f$ be a holomorphic function defined in $\dot{D}_{\rho_0}$ with values in $\mathbb{H}$ such that can be extended to a function $F$ defined in $\dot{D}_{\rho_0}\cup Vq^{\R_+}$ and  
\begin{equation}\label{e3852}
\left\|F(x)\right\|_{\mathbb{H}}\le C_{1}e^{\overline{M}\log^{2}|x|},\qquad x\in Vq^{\R}:=Vq^{\R_+}\cup V(q^{-1})^{\R_{+}},
\end{equation}
for positive constants $C_{1}>0$ and $0<\overline{M}<\frac{1}{2\log|q|}$.

Let $\pi_{q}=\log(q)\prod_{n\ge 0}(1-q^{-n-1})^{-1}$ and put
\begin{equation}\label{e4010}
\mathcal{L}_{q;1}^{\lambda}F(z)=\frac{1}{\pi_{q}}\int_{0}^{\infty\lambda}\frac{F(\xi)}{\Theta(\frac{\xi}{z})}\frac{d\xi}{\xi},
\end{equation}
where the path $[0,\infty\lambda]$ is given by $t\in(-\infty,\infty)\mapsto q^{t}\lambda$.
Then, $\mathcal{L}_{q;1}^{\lambda}F$ defines a holomorphic function in $\mathcal{R}_{\lambda,q,\delta}$ and it is known as the $q-$Laplace transform of $f$ following direction $[\lambda]$.
\end{prop}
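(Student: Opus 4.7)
The plan is to reparametrize the integral defining $\mathcal{L}_{q;1}^{\lambda}F(z)$ as an ordinary real integral in $t\in\R$, establish a Gaussian-type pointwise bound on the integrand whose $t^{2}$-exponent is strictly negative, and then conclude local uniform convergence on $\mathcal{R}_{\lambda,q,\delta}$. Combined with the pointwise $z$-holomorphy of the integrand, this will yield holomorphy of $\mathcal{L}_{q;1}^{\lambda}F$ by a standard Morera--Fubini argument.

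First I would substitute $\xi = q^{t}\lambda$, $t\in\R$, so that $d\xi/\xi = \log(q)\,dt$. Since $\lambda\in D(0,\rho_{0})\cap V$, the path $\{q^{t}\lambda : t\in\R\}$ is contained in $\dot{D}_{\rho_{0}}\cup Vq^{\R_{+}}$, which is the domain of $F$; hence
\[
\mathcal{L}_{q;1}^{\lambda}F(z) = \frac{\log q}{\pi_{q}}\int_{-\infty}^{+\infty}\frac{F(q^{t}\lambda)}{\Theta(q^{t}\lambda/z)}\,dt.
\]
For $z\in\mathcal{R}_{\lambda,q,\delta}$ and every $m\in\Z$, applying the defining inequality of $\mathcal{R}_{\lambda,q,\delta}$ with $k=-(t+m)\in\R$ gives $|1+q^{t+m}\lambda/z|>\delta$, so the theta denominator never vanishes along the path and Corollary~\ref{coro361} is applicable.

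The heart of the argument is balancing the two $\log^{2}$-rates. The growth hypothesis yields
\[
\|F(q^{t}\lambda)\|_{\mathbb{H}}\le C_{1}\exp\bigl(\overline{M}(t\log|q|+\log|\lambda|)^{2}\bigr),
\]
while, for any $\alpha\in(0,1)$, Corollary~\ref{coro361} provides $C_{\alpha}>0$ with
\[
|\Theta(q^{t}\lambda/z)|\ge C_{\alpha}\exp\Bigl(\tfrac{\alpha}{2\log|q|}(t\log|q|+\log|\lambda/z|)^{2}\Bigr).
\]
Expanding both squares and collecting powers of $t$, the modulus of the integrand is majorized by
\[
\tilde{C}\exp\bigl(-\gamma\,t^{2}+B(z)\,t+A(z)\bigr),\qquad \gamma=\tfrac{\log|q|}{2}\bigl(\alpha-2\overline{M}\log|q|\bigr),
\]
where $B(z)$ and $A(z)$ depend only on $\log|\lambda|$ and $\log|\lambda/z|$. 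The hypothesis $\overline{M}<1/(2\log|q|)$ rewrites as $2\overline{M}\log|q|<1$; hence I may fix any $\alpha\in(2\overline{M}\log|q|,\,1)$ to obtain $\gamma>0$, and absolute convergence of the $t$-integral follows.

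For the holomorphy conclusion, on a compact $K\subset\mathcal{R}_{\lambda,q,\delta}$ one has a uniform $\delta_{K}>0$ in the defining inequality and a uniform bound on $\log|\lambda/z|$; consequently the estimate above is dominated by a single $t$-integrable function independent of $z\in K$. Since the integrand is holomorphic in $z$ for every fixed $t$ (the zeros of $\Theta(q^{t}\lambda/z)$ in $z$ lie at $-q^{s}\lambda$, $s\in t+\Z$, a set avoided by $\mathcal{R}_{\lambda,q,\delta}$), the standard holomorphy-of-parameter-integrals criterion (Morera combined with Fubini) yields $\mathcal{L}_{q;1}^{\lambda}F\in\mathcal{O}(\mathcal{R}_{\lambda,q,\delta})$. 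The main obstacle is precisely the sharp balance between the growth of $F$ and the decay of $1/\Theta$, both of order $\log^{2}$: the strict inequality $\overline{M}<1/(2\log|q|)$ leaves just enough room to pick $\alpha\in(2\overline{M}\log|q|,1)$, without which the $t^{2}$-coefficient would fail to be strictly negative.
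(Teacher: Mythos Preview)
Your argument is correct and follows essentially the same route as the paper: both parametrize via $\xi=q^{t}\lambda$, invoke Corollary~\ref{coro361} with a parameter chosen in the interval $(2\overline{M}\log|q|,1)$ allowed by the strict hypothesis $\overline{M}<1/(2\log|q|)$, and use the resulting negative $\log^{2}$-coefficient to obtain an integrable, locally $z$-uniform majorant. You are slightly more explicit than the paper in spelling out the Gaussian form in $t$ and the Morera--Fubini step for holomorphy, but there is no substantive difference in strategy.
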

\begin{proof}

Let $K\subseteq\mathcal{R}_{\lambda,q,\delta}$ be a compact set and $z\in K$. From the parametrization of the path $[0,\infty\lambda]$ we have
$$\int_{0}^{\infty\lambda}\frac{F(\xi)}{\Theta\left(\frac{\xi}{z}\right)}\frac{d\xi}{\xi}=\log(q)\int_{-\infty}^{\infty}\frac{F(q^t\lambda)}{\Theta\left(\frac{q^t\lambda}{z}\right)}dt. $$
Let $0<\xi_{1}<1$ such that $0<\overline{M}<\frac{\xi_{1}}{2\log|q|}$ and let $t\in\R$. We have $w=\frac{q^t\lambda}{z}$ satisfies $|1+q^{k}w|>\delta$ for every $k\in\Z$. Corollary~\ref{coro361} and (\ref{e3852}) yields
$$\int_{-\infty}^{\infty}\left\|\frac{F(q^t\lambda)}{\Theta\left(\frac{q^t\lambda}{z}\right)}\right\|_{\mathbb{H}}dt\le\int_{-\infty}^{\infty}\frac{C_{1}e^{\overline{M}\log^2|q^t\lambda|}}{C_{\xi_{1}}e^{\frac{\xi_{1}}{2\log|q|}\log^2|q^t\lambda/z|}}dt\le L_{1}\int_{-\infty}^{\infty}|q^t\lambda|^{\frac{\xi_{1}L_{2}}{\log|q|}}e^{(\overline{M}-\frac{\xi_{1}}{2\log|q|})\log^{2}|q^t\lambda|}dt,$$
for positive constants $L_{1},L_{2}$. This integral is convergent and does not depend on $z\in K$.
\end{proof}
\textit{Remark:} If we let $\overline{M}=\frac{1}{2\log|q|}$, then $\mathcal{L}^{\lambda}_{q;1}F$ will only remain holomorphic in $\mathcal{R}_{\lambda,q,\delta}\cap D(0,r_{1})$ for certain $r_{1}>0$.

In the next proposition, we recall a commutation formula for the $q-$Laplace transform and the multiplication by a polynomial.
\begin{prop}\label{proposicion2}
Let $V$ be an open and bounded set in $\C^{\star}$ and $D(0,\rho_{0})$ such that $V\cap D(0,\rho_{0})\neq \emptyset$. Let $\phi$ a holomorphic function on $Vq^{\R_{+}}\cup \dot{D}_{\rho_0}$ with values in the Banach space $(\mathbb{H},\left\|\cdot\right\|_{\mathbb{H}})$ which satisfies the following estimates: there exist $C_{1}>0$ and $0<\overline{M}<\frac{1}{2\log|q|}$ such that 
\begin{equation}\label{e423}
\left\|\phi(x)\right\|_{\mathbb{H}}< C_{1}e^{\overline{M}\log^{2}|x|},\qquad x\in Vq^{\R_{+}}.
\end{equation}
Then, the function $m\phi(\tau)=\tau\phi(\tau)$ is holomorphic on $Vq^{\R_+}\cup \dot{D}_{\rho_0}$ and satisfies estimates in the shape above. Let $\lambda\in V\cap D(0,\rho_{0})$ and $\delta>0$. We have the following equality
$$\mathcal{L}_{q;1}^{\lambda}(m\phi)(t)=t\mathcal{L}^{\lambda}_{q;1}\phi(qt)$$
for every $t\in \mathcal{R}_{\lambda,q,\delta}$.
\end{prop}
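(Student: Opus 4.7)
The plan is to reduce the identity to the functional equation of the Theta Jacobi function. The starting point is simply to plug $m\phi(\tau) = \tau\phi(\tau)$ into the defining integral (\ref{e4010}) and to exploit the fact that $\Theta$ transforms in a controlled way under the dilation $\xi \mapsto \xi/q$.

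First I would check the preliminary analytic point: the function $\tau \mapsto \tau\phi(\tau)$ is clearly holomorphic on $Vq^{\R_+}\cup \dot D_{\rho_0}$, and the estimate (\ref{e423}) for $\phi$ gives $\|m\phi(\tau)\|_{\mathbb H}\le C_1|\tau|\,e^{\overline{M}\log^2|\tau|}$. Since $|\tau|e^{\overline{M}\log^2|\tau|}\le C\,e^{\overline{M}'\log^2|\tau|}$ for any $\overline{M}<\overline{M}'<\tfrac{1}{2\log|q|}$ (the factor $|\tau|=e^{\log|\tau|}$ being absorbed at infinity, and bounded on bounded sets), $m\phi$ still satisfies an estimate of the same shape, with an exponent strictly less than $\tfrac{1}{2\log|q|}$. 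Proposition~\ref{prop11} then applies and the left-hand side $\mathcal{L}^\lambda_{q;1}(m\phi)(t)$ is a well-defined holomorphic function of $t\in\mathcal R_{\lambda,q,\delta}$. One also notes that $\mathcal R_{\lambda,q,\delta}$ is stable under multiplication by $q$, because the condition $|1+\lambda/(zq^k)|>\delta$ for all $k\in\Z$ is invariant under the shift $k\mapsto k+1$; hence $t\mapsto \mathcal{L}^\lambda_{q;1}\phi(qt)$ is well-defined there as well.

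The core computation is then straightforward. Writing the integral along the path $[0,\infty\lambda]$, one has
$$\mathcal{L}^{\lambda}_{q;1}(m\phi)(t)=\frac{1}{\pi_{q}}\int_{0}^{\infty\lambda}\frac{\xi\phi(\xi)}{\Theta(\xi/t)}\,\frac{d\xi}{\xi}=\frac{1}{\pi_{q}}\int_{0}^{\infty\lambda}\frac{\phi(\xi)}{\Theta(\xi/t)}\,d\xi.$$
The functional equation $xq\,\Theta(x)=\Theta(qx)$ applied with $x=\xi/(qt)$ yields $\Theta(\xi/t)=(\xi/t)\,\Theta(\xi/(qt))$, so that
$$\frac{1}{\Theta(\xi/t)}=\frac{t}{\xi\,\Theta(\xi/(qt))}.$$
Substituting and pulling the factor $t$ outside the integral produces
$$\mathcal{L}^{\lambda}_{q;1}(m\phi)(t)=t\cdot\frac{1}{\pi_{q}}\int_{0}^{\infty\lambda}\frac{\phi(\xi)}{\Theta(\xi/(qt))}\,\frac{d\xi}{\xi}=t\,\mathcal{L}^{\lambda}_{q;1}\phi(qt),$$
which is the desired identity.

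The only non-routine point I anticipate is the bookkeeping on $\overline{M}$ in the first step: the factor $|\tau|$ coming from multiplication by $\tau$ must be absorbed into the Gaussian-type weight without crossing the critical threshold $\tfrac{1}{2\log|q|}$. This is harmless because the hypothesis (\ref{e423}) allows strict inequality $\overline{M}<\tfrac{1}{2\log|q|}$, giving room to increase it slightly. Once this is settled, everything reduces to a one-line application of the functional equation of $\Theta$, and Fubini/convergence issues are already dealt with by the bound used in the proof of Proposition~\ref{prop11}.
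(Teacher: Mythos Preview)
Your argument is correct and coincides with the paper's own proof: both reduce the identity to the functional equation of $\Theta$ (the paper invokes (\ref{e231}) with $m=-1$, which is equivalent to your use of $xq\,\Theta(x)=\Theta(qx)$ at $x=\xi/(qt)$), after noting that $m\phi$ still obeys an estimate of the form (\ref{e423}). The only minor slip is that $\mathcal{R}_{\lambda,q,\delta}$ is defined with $k\in\R$ rather than $k\in\Z$, but your stability-under-$q$ observation holds verbatim in that case.
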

\begin{proof}
It is direct to prove that $m\phi$ is a holomorphic function in $Vq^{\R_{+}}\cup \dot{D}_{\rho_0}$ and also that $m\phi$ verifies bounds as in (\ref{e423}). From (\ref{e231}) taking $m=-1$ we derive
\begin{align*}
\mathcal{L}_{q;1}^{\lambda}(m\phi)(t)&=\frac{1}{\pi_{q}}\int_{0}^{\infty\lambda}\frac{(m\phi)(\xi)}{\Theta(\frac{\xi}{t})}\frac{d\xi}{\xi}=\frac{1}{\pi_{q}}\int_{0}^{\infty\lambda}\frac{\phi(\xi)}{\Theta(\frac{\xi}{t})}d\xi\\
&=\frac{1}{\pi_{q}}\int_{0}^{\infty\lambda}\frac{\phi(\xi)}{\frac{\xi}{t}\Theta(\frac{\xi}{qt})}d\xi=t\mathcal{L}_{q;1}^{\lambda}(\phi)(qt),
\end{align*}
for every $t\in \mathcal{R}_{\lambda,q,\delta}$.
\end{proof}

\end{subsection}

\begin{subsection}{Analytic solutions in a parameter of a singularly perturbed Cauchy problem}
The following definition of a good covering firstly appeared in~\cite{ramissauloyzhang}, p.36.
\begin{defin}\label{def527}
Let $I=(I_{1},I_{2})$ be a pair of two open intervals in $\R$ each one of length smaller than $1/4$ and let $U_{I}$ be the corresponding open bounded set in $\C^{\star}$ defined by
$$U_{I}=\{e^{2\pi ui}q^v\in\C^{\star}: u\in I_{1},v\in I_{2}\}. $$
Let $\mathcal{I}$ be a finite family of tuple $I$ as above verifying
\begin{enumerate}
\item $\cup_{I\in\mathcal{I}}(U_{I}q^{-\N})=\nu\setminus\{0\}$, where $\nu$ is a neighborhood of 0 in $\C$, and
\item the open sets $U_{I}q^{-\N}$, $I\in\mathcal{I}$ are four by four disjoint.
\end{enumerate}
Then, we say $(U_{I}q^{-\N})_{I\in\mathcal{I}}$ is a good covering.
\end{defin}
\begin{defin}\label{definicion536}
Let $(U_{I}q^{-\N})_{I\in\mathcal{I}}$ be a good covering. Let $\delta>0$. We consider a family of open bounded sets $(V_{I})_{I\in\mathcal{I}},\mathcal{T}$ in $\C^{\star}$ such that:
\begin{enumerate}
\item There exists $1<\rho_0$ with $V_{I}\cap D(0,\rho_{0})\neq\emptyset$, for all $I\in\mathcal{I}$.
\item For every $I\in\mathcal{I}$ and $\tau\in V_{I}q^{\R}$, $|\tau+1|>\delta$.
\item For every $I\in\mathcal{I}$, $t\in\mathcal{T}$, $\epsilon_{u}\in U_{I}$ and $\lambda_{v}\in V_{I}\cap D(0,\rho_0)$, we have
$$|1+\frac{\lambda_{v}}{\epsilon_{u}tq^{r}}|>\delta,$$
for every $r\in\R$.
\item $|t|\le 1$ for every $t\in\mathcal{T}$.
\end{enumerate}
We say the family $\{(V_{I})_{I\in\mathcal{I}},\mathcal{T}\}$ is associated to the good covering $(U_{I}q^{-\N})_{I\in\mathcal{I}}$.
\end{defin}
Let $S\ge1$ be an integer. For every $0\le k\le S-1$, let $m_{0,k},m_{1,k}$ be positive integers and $b_{k}(\epsilon,z)=\sum_{s\in I_{k}}b_{ks}(\epsilon)z^s$ be a polynomial in $z$, where $	I_{k}$ is a subset of $\N$ and $b_{ks}(\epsilon)$ are bounded holomorphic functions on some disc $D(0,r_{0})$ in $\C$, $0<r_{0}\le 1$. Let $(U_{I}q^{-\N})_{I\in\mathcal{I}}$ be a good covering such that $U_{I}q^{-\N}\subseteq D(0,r_{0})$ for every $I\in \mathcal{I}$.

\textbf{Assumption (B)}: $$M\le \frac{1}{2\log|q|}.$$

\begin{defin}
Let $T_0>0$, $\rho_0>1$ such that $V\cap D(0,\rho_0)\neq \emptyset$. Let $\Delta,\tilde{M}>0$ such that $\tilde{M}<M$ and $(\epsilon,\tau)\mapsto W(\epsilon,\tau)$ a bounded holomorphic function on $D(0,r_0)\setminus\{0\}\times \dot{D}_{\rho_{0}}$ verifying 
$$|W(\epsilon,\tau)|\le \Delta e^{\tilde{M}\log^2|\tau/\epsilon|}, $$ 
for every $(\epsilon,\tau)\in D(0,r_0)\setminus\{0\}\times \dot{D}_{\rho_0}$. Assume moreover that $W(\epsilon,\tau)$ can be extended to an analytic function $(\epsilon,\tau)\mapsto W_{U V}(\epsilon,\tau)$ on $Uq^{-\N}\times (Vq^{\R_+}\cup \dot{D}_{\rho_0})$ and 
\begin{equation}\label{e658}
|W_{UV}(\epsilon,\tau)|\le \Delta e^{\tilde{M}\log^2|\tau/\epsilon|},
\end{equation} 
for every $(\epsilon,\tau)\in Uq^{-\N}\times (Vq^{\R_+}\cup \dot{D}_{\rho_0})$. We say that the set $\{W, W_{UV},\rho_0 \} $ is admissible.
\end{defin}

Let $\mathcal{I}$ be a finite family of indices. For every $I\in\mathcal{I}$, we consider the following singularly perturbed Cauchy problem
\begin{equation}\label{ee1}\epsilon t \partial_{z}^{S}X_I(\epsilon,qt,z)+\partial_z^{S}X_I(\epsilon,t,z)=\sum_{k=0}^{S-1}b_k(\epsilon,z)(t\sigma_q)^{m_{0,k}}(\partial_z^kX_I)(\epsilon,t,zq^{-m_{1,k}})
\end{equation}
with $b_{k}$ as in (\ref{fe1}), and with initial conditions
\begin{equation}\label{ii1} (\partial_{z}^{j}X_{I})(\epsilon,t,0)=\phi_{I,j}(\epsilon,t)\quad,\quad 0\le j\le S-1,
\end{equation}
where the functions $\phi_{I,j}(\epsilon,t)$ are constructed as follows. Let $\{(V_{I})_{I\in\mathcal{I}},\mathcal{T}\}$ be a family of open sets associated to the good covering $(U_{I}q^{-\N})_{I\in\mathcal{I}}$. For every $0\le j\le S-1$ and $I\in\mathcal{I}$, let $\{W_j,W_{U_I,V_I,j},\rho_0 \}$ be an admissible set. Let $\lambda_I$ be a complex number in $V_I\cap D(0,\rho_0)$. We can assume that $r_{0}<1<|\lambda_{I}|$. If not, we dimish $r_0$ as desired. We put
$$ \phi_{I,j}(\epsilon,t):=\mathcal{L}_{q;1}^{\lambda_I}(\tau\mapsto W_{U_I,V_I,j}(\epsilon,\tau))(\epsilon t).  $$
Similar arguments as the ones used in the proof of Theorem~\ref{teo572} and taking into account Assumption(B), Proposition~\ref{prop11} and estimates in (\ref{e658}), we deduce that $\phi_{I,j}(\epsilon,t)$ is holomorphic and bounded on $U_Iq^{-\N}\times\mathcal{T}$ for every $I\in\mathcal{I},0\le j\le S-1$.

The following assumption is related to technical reasons appearing in the proof of the following theorem.
 
\textbf{Assumption (C)}:
There exist $a_1,a_2,b_1,b_2,d_1,d_2>0$, $0<\xi,\overline{\xi}<1$ such that
\begin{enumerate}
\item[(C.1)]\hfill $\log|q|<\frac{b_1}{b_2}$,\phantom{mmmmmmmmmmmmmmmmm}
\item[(C.2)]\hfill $\log|q|+\frac{\xi b_1}{2b_2}+\frac{d_1}{d_2}\Big(M-\frac{\xi}{2\log|q|}\Big)>0$,\phantom{mmmmmmmmmmmmmmmmm}
\item[(C.3)]\hfill $M-\frac{\xi}{2\log|q|}+\frac{d_2}{d_1}\log|q|<0$,\phantom{mmmmmmmmmmmmmmmmm}
\item[(C.4)]$$A_1\Big(1-\frac{d_2\log|q|}{d_1(\frac{\xi}{2\log|q|}-M)}\Big)>\frac{C^2}{4\overline{\xi}\log|q|(\frac{\xi}{2\log|q|}-M)}+\frac{Ca_{2}}{a_{1}}.$$
\end{enumerate}

Next remark clarifies availability of these constants for a posed problem.

\textbf{Remark:} Assumptions (A), (B) and (C) hardly depend on the choice of $q$ whose modulus must rest near 1. This assumptions on the constants are verified when taking $\log|q|=1/16$, $M=1$, $A_1=2$, $C=1$, $\xi=1/2$, $\overline{\xi}=1/2$, $a_1=9$, $a_2=1$, $b_1=b_2=1$, $d_1=1$, $d_2=12$, 
$\overline{\xi}=\xi=1/2$, $C=1/4$, $a_{1}=a_{2}=1$ and $M=1/(\log|q|8)$. Then, the previous theorem provides a solution for the equation
$$\epsilon t\partial^4_{z}X_{I}(\epsilon,qt,z)+\partial_{z}^4 X_{I}(\epsilon,t,z)=\big(b_{00}(\epsilon)+b_{01}(\epsilon)z\big)t^3X_{I}(\epsilon,q^3t,zq^{-17})+b_{10}z\partial_{z}X_{I}(\epsilon,q^4t,zq^{-21}),$$
with $b_{00},b_{01},b_{11}$ being holomorphic functions near the origin.

\begin{theo}\label{teo572}
Let Assumption (A) be fulfilled by the integers $m_{0,k},m_{1,k}$, for $0\le k\le S-1$ and also assumptions (B) and (C) for $M,A_{1},C$. We consider the problem (\ref{ee1})+(\ref{ii1})  where the initial conditions are constructed as above. Then, for every $I\in\mathcal{I}$, the problem (\ref{ee1})+(\ref{ii1}) has a solution $X_{I}(\epsilon,t,z)$ which is holomorphic and bounded in $U_{I}q^{-\N}\times\mathcal{T}\times \C$. 

Moreover, for every $\rho>0$, if $I,I'\in\mathcal{I}$ are such that $U_{I}q^{-\N}\cap U_{I'}q^{-\N}\neq\emptyset$ then there exists a positive constant $C_{1}=C_{1}(\rho)>0$ such that 
$$|X_{I}(\epsilon, t, z )-X_{I'}(\epsilon, t, z)|\le C_{1}e^{-\frac{1}{A}\log^2|\epsilon|},\quad (\epsilon, t,z)\in (U_{I}q^{-\N}\cap U_{I'}q^{-\N})\times \mathcal{T}\times D(0,\rho),$$
with $\frac{1}{A}=(1-\overline{\xi})(\frac{\xi}{2\log|q|}-M)$ with $\xi,\overline{\xi}$ chosen as in Assumption (C).

\end{theo}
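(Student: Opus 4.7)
The plan is to construct $X_I$ by first producing a single function $\tau \mapsto W_\beta^I(\epsilon,\tau)$ defined on the extended domain $U_Iq^{-\N}\times(V_Iq^{\R_+}\cup\dot{D}_{\rho_0})$, and then $q$-Laplace transforming each coefficient. First, apply Theorem~\ref{teorema1} to the data $W_{U_I,V_I,j}$: Assumption~(A) and the growth bound in the admissibility condition (\ref{e658}) provide, for each $I$, coefficients $W_\beta^{I,V}(\epsilon,\tau)$ on $U_Iq^{-\N}\times V_Iq^{\R_+}$ satisfying (\ref{e178}). In parallel, apply Theorem~\ref{teorema2} to the data $W_j$ (restricted to $\dot{D}_{\rho_0}$), producing coefficients $W_\beta^D(\epsilon,\tau)$ on $(D(0,r_0)\setminus\{0\})\times\dot{D}_{\rho_0}$ satisfying (\ref{e325}). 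Since both families arise from the \emph{same} recursion (\ref{e223}) starting from identical initial data on $\dot{D}_{\rho_0}$, uniqueness forces $W_\beta^{I,V}=W_\beta^D$ on the common overlap. They glue into a holomorphic $W_\beta^I$ on $U_Iq^{-\N}\times(V_Iq^{\R_+}\cup\dot{D}_{\rho_0})$, subject to the bound (\ref{e178}) on $Vq^{\R_+}$ and (\ref{e325}) on $\dot{D}_{\rho_0}$.

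Next, Assumption~(B) together with (C.1)--(C.3) lets us apply Proposition~\ref{prop11} coefficientwise: for $t\in\mathcal{T}$ the integrand $W_\beta^I(\epsilon,\xi)/\Theta(\xi/(\epsilon t))$ is integrable along $[0,\infty\lambda_I]$, so $X_{I,\beta}(\epsilon,t):=\mathcal{L}_{q;1}^{\lambda_I}(W_\beta^I(\epsilon,\cdot))(\epsilon t)$ is a holomorphic bounded function of $(\epsilon,t)\in U_Iq^{-\N}\times\mathcal{T}$. Carrying out the integral estimate using Corollary~\ref{coro361}, (\ref{e178}) and $|t|\le 1$, one derives a bound of the form $|X_{I,\beta}(\epsilon,t)|\le K_1 \beta!(K_2)^\beta |q|^{-A_1\beta^2}$; that is where Assumption~(C.4) will be used, to absorb the quadratic-in-$\log$ contribution from both numerator and denominator into $|q|^{-A_1\beta^2/2}$ while preserving enough $q$-decay. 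Define
\[
X_I(\epsilon,t,z):=\sum_{\beta\ge 0}X_{I,\beta}(\epsilon,t)\frac{z^\beta}{\beta!}.
\]
The super-geometric factor $|q|^{-A_1\beta^2}$ dominates any $z^\beta/\beta!$ growth, so the series defines an entire function of $z$ with bounded coefficients, which is thus holomorphic and bounded on $U_Iq^{-\N}\times\mathcal{T}\times\C$.

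To check that $X_I$ solves (\ref{ee1})+(\ref{ii1}), I apply $\mathcal{L}_{q;1}^{\lambda_I}$ (coefficient by coefficient in $z$) to equation (\ref{fe1}), which $W_\beta^I$ satisfies: multiplying (\ref{fe1}) through by $(\tau+1)\epsilon^{m_{0,k}}$ gives a polynomial identity in $\tau$ whose $q$-Laplace transform is computed using the commutation formula of Proposition~\ref{proposicion2}, which converts multiplication by $\tau^m$ into the shift $(\epsilon t)^m\sigma_q^m$. The factor $\tau+1$ yields exactly the left-hand side $\partial_z^S X_I(\epsilon,t,z)+\epsilon t\partial_z^S X_I(\epsilon,qt,z)$, and each $\tau^{m_{0,k}}/\epsilon^{m_{0,k}}$ produces $(t\sigma_q)^{m_{0,k}}$ acting on $\partial_z^k X_I$. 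The initial condition at $z=0$ is immediate from the construction $\phi_{I,j}=\mathcal{L}_{q;1}^{\lambda_I}W_{U_I,V_I,j}(\epsilon t)$.

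For the flatness estimate (\ref{e11}), I fix $(\epsilon,t,z)$ in the triple intersection domain and write
\[
X_I-X_{I'}=\sum_{\beta\ge 0}\bigl[\mathcal{L}_{q;1}^{\lambda_I}(W_\beta^I(\epsilon,\cdot))(\epsilon t)-\mathcal{L}_{q;1}^{\lambda_{I'}}(W_\beta^{I'}(\epsilon,\cdot))(\epsilon t)\bigr]\frac{z^\beta}{\beta!}.
\]
Since $W_\beta^I$ and $W_\beta^{I'}$ agree on $(U_Iq^{-\N}\cap U_{I'}q^{-\N})\times\dot{D}_{\rho_0}$ (by the same recursion/uniqueness argument), Cauchy's theorem allows me to deform each contour $[0,\lambda_I]$ and $[0,\lambda_{I'}]$ inside $\dot{D}_{\rho_0}$ onto a common arc, so that the bracketed difference reduces to two tail integrals $\int_{|\xi|=\rho_1}^{\infty\lambda_I}$ and $\int_{|\xi|=\rho_1}^{\infty\lambda_{I'}}$ for some $\rho_1\in(1,\rho_0)$. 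This is where the continuous character of the $q$-Laplace transform is essential. On these tails the numerator obeys (\ref{e178}) while the denominator is bounded below by Corollary~\ref{coro361}: combining the two, and using $\log^2|\xi/\epsilon|=(\log|\xi|-\log|\epsilon|)^2$, I extract the factor $\exp(-(1-\overline\xi)(\tfrac{\xi}{2\log|q|}-M)\log^2|\epsilon|)$ using the standard trick $(\log|\xi|-\log|\epsilon|)^2\ge(1-\overline\xi)\log^2|\epsilon|-(\tfrac{1}{\overline\xi}-1)\log^2|\xi|$; the leftover $\log^2|\xi|$ pieces are absorbed by the strict inequality $\tilde M<M$ and part of $\xi/(2\log|q|)-M>0$. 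Summing the $\beta$-series is controlled by the $|q|^{-A_1\beta^2}$ factor, uniformly for $|z|\le\rho$. The main obstacle is the bookkeeping in this tail estimate: verifying that the constants in Assumption~(C), particularly (C.4), leave enough room to dominate every cross term $C\beta\log|\xi|\cdot\log|\xi/\epsilon|$ and $\beta^2$ term that enters after combining (\ref{e178}) with the theta bound.
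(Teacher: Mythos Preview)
Your overall strategy matches the paper's: glue the coefficients $W_\beta$ from Theorems~\ref{teorema1} and~\ref{teorema2} via the shared recursion~(\ref{e223}), $q$-Laplace transform termwise, and invoke Proposition~\ref{proposicion2} to recover equation~(\ref{ee1}). The convergence argument for $X_I$ is also handled the same way in the paper (it splits $\mathcal{L}_{q;1}^{\lambda_I}$ at $s=0$ into $\mathcal{L}_{q;1,+}$ and $\mathcal{L}_{q;1,-}$ and estimates each half with Corollary~\ref{coro361}; Assumption~(C) enters precisely to absorb the cross terms you flag).

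There is one genuine omission in your flatness argument. After deforming the inner portions of the two contours via Cauchy's theorem, the difference does \emph{not} reduce to two tail integrals alone: the endpoints $\lambda_I$ and $\lambda_{I'}$ are distinct, so an arc joining them survives. In the paper's decomposition the contour difference is written as $\gamma_1-\gamma_2+(\gamma_3-\gamma_4)$, with $\gamma_1,\gamma_2$ the outer tails (parameter $s\in[0,\infty)$) and $\gamma_3,\gamma_4$ the inner parts ($s\in(-\infty,0]$); Cauchy's theorem collapses $\gamma_3-\gamma_4$ to an arc $-\gamma_5$ at radius $|\lambda_I|=|\lambda_{I'}|$, yielding \emph{three} pieces to estimate, not two. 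The tails $\gamma_1,\gamma_2$ are bounded using the spiral estimate~(\ref{e178}) as you indicate, but $\gamma_5$ lies inside $\dot{D}_{\rho_0}$ and must be bounded using the disc estimate~(\ref{e325}), which carries the factor $|\epsilon|^{-C\beta}$ rather than $|\tau/\epsilon|^{C\beta}$. That arc estimate is what produces the contribution~(\ref{e708}) and is indispensable for the final inequality; your claim that the difference ``reduces to two tail integrals'' skips it entirely. Once you add and bound the arc piece, the rest of your outline goes through essentially as in the paper.
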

\begin{proof}
Let $\delta>0$ and $I\in\mathcal{I}$. We consider the Cauchy problem (\ref{fe2}) with initial conditions $(\partial_{z}^{j}W)(\epsilon,\tau,0)=W_{j}(\epsilon,\tau)$ for $0\le j\le S-1$. From Theorem~\ref{teorema2} we obtain the existence of a unique formal solution $W(\epsilon,\tau,z)=\sum_{\b\ge0}W_{\b}(\epsilon,\tau)\frac{z^{\b}}{\b}\in\mathcal{O}(D(0,r_{0})\setminus\{0\}\times \dot{D}_{\rho_0})[[z]]$ and positive constants $C_3>0$ and $0<\delta_1<1$ such that 
\begin{equation}\label{aa}
|W_{\b}(\epsilon,\tau)|\le C_3\b!\Big(\frac{|q|^{2A_1S}}{\delta_1}\Big)^{\b}|\epsilon|^{-C\b}e^{M\log^2\left|\frac{\tau}{\epsilon}\right|}|q|^{-A_{1}\b^2},\quad\b\ge0,
\end{equation}
for $(\epsilon,\tau)\in(D(0,r_0)\setminus\{0\})\times \dot{D}_{\rho_0}$.

Moreover, from Theorem~\ref{teorema1} we get that the coefficients $W_{\b}(\epsilon,\tau)$ can be extended to holomorphic functions defined in $U_{I}q^{-\N}\times V_{I}q^{\R_{+}}$ and also the existence of positive constants $C_2$ and $0<\delta_2<1$ such that
\begin{equation}\label{bb}
|W_{\b}(\epsilon,\tau)|\le C_2\b!\Big(\frac{|q|^{2A_1S}}{\delta_2}\Big)^{\b}\left|\frac{\tau}{\epsilon}\right|^{C\b}e^{M\log^2\left|\frac{\tau}{\epsilon}\right|}|q|^{-A_{1}\b^2},\quad\b\ge0,
\end{equation}
for $(\epsilon,\tau)\in U_{I}q^{-\N}\times V_{I}q^{\R_{+}}$.

We choose $\lambda_{I}\in V_{I}\cap D(0,\rho_{0})$. In the following estimates we will make use of the fact that $|\epsilon|\le|\lambda_{I}|$ for every $\epsilon\in D(0,r_{0}\setminus\{0\})$. Proposition~\ref{prop11} allows us to calculate the $q-$Laplace transform of $W_{\b}$ with respect to $\tau$ for every $\b\ge 0$, $\mathcal{L}_{q;1}^{\lambda_{I}}(W_{\b})(\epsilon,\tau)$. It defines a holomorphic function in $U_{I}q^{-\N}\times \mathcal{R}_{\lambda_{I},q,\delta}$. From the fact that $\{(V_{I})_{I\in\mathcal{I}},\mathcal{T}\}$ is chosen to be a family associated to the good covering $(U_{I}q^{-\N})_{I\in\mathcal{I}}$ we derive that the function 
$$(\epsilon,t)\mapsto \mathcal{L}_{q;1}^{\lambda_{I}}(W_{\b})(\epsilon,\epsilon t)$$
is a holomorphic and bounded function defined in $U_{I}q^{-\N}\times \mathcal{T}$. 
We can define, at least formally, 
\begin{equation}\label{e710}
X_{I}(\epsilon,t,z):= \sum_{\b\ge 0}\mathcal{L}_{q;1}^{\lambda_{I}}(W_{\b})(\epsilon,\epsilon t)\frac{z^{\b}}{\b!},
\end{equation}
in $\mathcal{O}(U_{I}q^{-\N}\times \mathcal{T})[[z]]$.
If $X_{I}(\epsilon,t,z)$ were a holomorphic function in $U_{I}q^{-\N}\times\mathcal{T}\times\C$, then Proposition~\ref{proposicion2} would allow us to affirm that (\ref{e710}) is an actual solution of (\ref{ee1})+(\ref{ii1}).
In order to end the first part of the proof it rests to demonstrate that (\ref{e710}) defines in fact a bounded holomorphic function in $U_{I}q^{-\N}\times \mathcal{T}\times \C$. 
Let $(\epsilon,t)\in U_{I}q^{-\N}\times \mathcal{T}$ and $\b\ge0$. We have

$$|\mathcal{L}_{q;1}^{\lambda_{I}}W_{\b}(\epsilon,\epsilon t)|\le |\mathcal{L}_{q;1,+}^{\lambda_{I}}W_{\b}(\epsilon,\epsilon t)|+|\mathcal{L}_{q;1,-}^{\lambda_{I}}W_{\b}(\epsilon,\epsilon t)|,$$
where
$$\mathcal{L}_{q;1,+}^{\lambda_{I}}W_{\b}(\epsilon,\epsilon t)=\frac{\log(q)}{\pi_{q}}\int_{0}^{\infty}\frac{W_{\b}(\epsilon,q^s\lambda_{I})}{\Theta(\frac{q^{s}\lambda_{I}}{\epsilon t})}ds,\quad \mathcal{L}_{q;1,-}^{\lambda_{I}}W_{\b}(\epsilon,\epsilon t)=\frac{\log(q)}{\pi_{q}}\int_{-\infty}^{0}\frac{W_{\b}(\epsilon,q^s\lambda_{I})}{\Theta(\frac{q^{s}\lambda_{I}}{\epsilon t})}ds.$$
We now establish bounds for both branches of $q-$Laplace transform.

$$|\mathcal{L}_{q;1,+}^{\lambda_{I}}W_{\b}(\epsilon,\epsilon t) |\le\frac{|\log q|}{|\pi_{q}|}\int_{0}^{\infty}\left|\frac{W_{\b}(\epsilon,q^{s}\lambda_{I})}{\Theta(\frac{q^s\lambda_{I}}{\epsilon t})}\right|ds.$$
Let $0<\xi<1$ as in Assumption (C). From (\ref{bb}) and (\ref{e376}), the previous integral is bounded by
\begin{align*}
&\le \frac{|\log q|}{|\pi_{q}|}\int_{0}^{\infty}\frac{C_2\b! \left(\frac{|q|^{2A_{1}S}}{\delta_{2}}\right)^{\b}\left|\frac{q^s\lambda_{I}}{\epsilon}\right|^{C\b}e^{M\log^{2}\left|\frac{q^s\lambda_{I}}{\epsilon}\right|}|q|^{-A_{1}\b^2}}{C_{\xi}\exp(\frac{\xi\log^{2}|\frac{q^s\lambda}{\epsilon t}|}{2\log|q|})}ds\\
&\le\frac{|\log q|}{|\pi_{q}|} \frac{C_2}{C_{\xi}}\b!\left(\frac{|q|^{2A_{1}S}}{\delta_{2}}\right)^{\b}\left|\frac{\lambda_{I}}{\epsilon}\right|^{C\b}|q|^{-A_{1}\b^2}\int_{0}^{\infty}\frac{ |q|^{Cs\b}e^{M\log^{2}\left|\frac{q^s\lambda_{I}}{\epsilon}\right|}}{\exp({\frac{\xi\log^{2}|\frac{q^s\lambda_{I}}{\epsilon t}|}{2\log|q|}})}ds.
\end{align*}

Let $a_{1},a_{2}$ as in Assumption (C.4).

From $(a_1s-a_2\b)^2\ge0$ and 4. in Definition~\ref{definicion536}, the previous inequality is upper bounded by
\begin{equation}\label{e512}
\mathcal{A}\int_{0}^{\infty}|q|^{-Bs^{2}}e^{(M-\frac{\xi}{2\log|q|})\log^{2}|\lambda_{I}/\epsilon|}e^{((2M\log|q|-\xi)\log|\lambda_{I}/\epsilon|+\xi\log|t|)s}ds,
\end{equation}
where $0<B=\xi\log|q|/2-M\log^2|q|-\frac{Ca_{1}}{2a_{2}}$ and
$$\mathcal{A}=\frac{|\log q|}{|\pi_{q}|}\frac{C_2}{C_{\xi}}\b!\left(\frac{|q|^{2A_{1}S}}{\delta_{2}}\right)^{\b}\left|\frac{\lambda_{I}}{\epsilon}\right|^{C\b}|q|^{-A_{1}\b^2+\frac{Ca_{2}\b^2}{2a_{1}}}e^{-\frac{\xi\log^2|t|}{2\log|q|}}e^{\frac{\xi\log|\lambda_{I}/\epsilon|\log|t|}{\log|q|}}.$$

The previous integral is uniformly bounded for $\epsilon\in D(0,r_{0})\setminus\{0\}$ and $t\in\mathcal{T}$ from hypotheses made on these sets. The expression in (\ref{e512}) can be bounded by 
$$ \frac{|\log q|}{|\pi_{q}|}\frac{C'_2}{C_{\xi}}\b!\left(\frac{|q|^{2A_{1}S}}{\delta_{2}}\right)^{\b}\left|\frac{\lambda_{I}}{\epsilon}\right|^{C\b}e^{(M-\frac{\xi}{2\log|q|})\log^2|\lambda_{I}/\epsilon|}|q|^{-A_{1}\b^2+\frac{Ca_{2}\b^2}{2a_{1}}}e^{-\frac{\xi\log^2|t|}{2\log|q|}}e^{\frac{\xi\log|\lambda_{I}/\epsilon|\log|t|}{\log|q|}},$$
for an appropriate constant $C'_{2}>0$.

The function $s\mapsto s^{\gamma\b}e^{-\alpha\log^{2}(s)}$ takes its maximum at $s=e^{\gamma\b/(2\alpha)}$ so each element in the image set is bounded by $e^{(\gamma\b)^{2}/(4\alpha)}$.
Taking this to the expression above we get 
$$|\mathcal{L}_{q;1,+}^{\lambda_{I}}W_{\b}(\epsilon ,\epsilon t)|\le \frac{|\log q|}{|\pi_{q}|}\frac{C''_{2}}{C_{\xi}}\b!\left(\frac{|q|^{2A_{1}S}}{\delta_{2}}\right)^{\b}|q|^{-A_{1}\b^2+\frac{Ca_{2}\b^2}{2a_{1}}+\frac{C^2\b^2}{4\log|q|(\xi/(2\log|q|)-M)}},$$
for certain $C''_2>0$.

Assumption (C.4) applied to the last term in the previous expression allows us to deduce that the sum
\begin{equation}\label{e619}
 \sum_{\b\ge 0}|\mathcal{L}_{q;1,+}^{\lambda_{I}}W_{\b}(\epsilon ,\epsilon t)|\frac{|z|^{\b}}{\b!}
\end{equation}
converges in the variable $z$ uniformly in the compact sets of $\C$. 

We now study $\mathcal{L}_{q;1,-}^{\lambda_{I}}W_{\b}(\epsilon, \epsilon t)$.
We have

$$|\mathcal{L}_{q;1,-}^{\lambda_{I}}W_{\b}(\epsilon ,\epsilon t)|\le\frac{|\log q|}{|\pi_{q}|}\int_{-\infty}^{0}\left|\frac{W_{\b}(\epsilon,q^{s}\lambda_{I})}{\Theta(\frac{q^s\lambda_{I}}{\epsilon t})}\right|ds.$$
From (\ref{e325}) and (\ref{e376}) the previous integral is bounded by 
$$\frac{|\log q|}{|\pi_{q}|}\phantom{x}\int_{-\infty}^{0}\frac{
 C_{3}\b! \left(\frac{|q|^{2A_{1}S}}{\delta_{1}}\right)^{\b}|\epsilon|^{-C\b}e^{M\log^{2}\left|\frac{q^s\lambda_{I}}{\epsilon}\right|}|q|^{-A_{1}\b^2}}{C_{\xi}e^{\frac{\xi\log^2|\frac{q^s\lambda_{I}}{\epsilon t}|}{2\log|q|}}}ds.$$

Similar calculations as in the first part of the proof resting on Assumption (C) can be followed so that the series
\begin{equation}\label{e657}
\sum_{\b\ge0}\mathcal{L}_{q;1,-}^{\lambda_{I}}W_{\b}(\epsilon ,\epsilon t)\frac{z^\b}{\b!}
\end{equation}
is uniformly convergent with respect to the variable $z$ in the compact sets of $\C$, for $(\epsilon,t)\in U_{I}q^{-\N}\times\mathcal{T}$. We will not enter into details not to repeat calculations.

The estimates (\ref{e619}) and (\ref{e657}) attain convergence of the series in (\ref{e710}) for every $z\in\C$. Boundness of the $q-$Laplace transform with respect to $\epsilon$ is guaranteed so the first part of the result is achieved.

Let $I,I'\in\mathcal{I}$ such that $U_{I}q^{-\N}\cap U_{I'}q^{-\N}\neq\emptyset$ and $\rho>0$. For every $(\epsilon, t,z)\in (U_{I}q^{-\N}\cap U_{I'}q^{-\N})\times\mathcal{T}\times D(0,\rho)$ we have
\begin{equation}\label{e682}
|X_{I}(\epsilon, t, z)- X_{I'}(\epsilon, t,z)|\le \sum_{\b\ge0}|\mathcal{L}_{q;1}^{\lambda_{I}}W_{\b}(\epsilon, \epsilon t)-\mathcal{L}_{q;1}^{\lambda_{I'}}W_{\b}(\epsilon, \epsilon t)|\frac{\rho^{\b}}{\b!}.
\end{equation}

We can write
\begin{equation}\label{aux5}
\mathcal{L}_{q;1}^{\lambda_{I}}W_{\b}(\epsilon, \epsilon t)-\mathcal{L}_{q;1}^{\lambda_{I'}}W_{\b}(\epsilon, \epsilon t)=\frac{\log(q)}{\pi_{q}}\Big(\int_{\gamma_{1}}\frac{W_{\b}(\epsilon,\xi)}{\Theta(\xi/\epsilon t)}\frac{d\xi}{\xi}-\int_{\gamma_{2}}\frac{W_{\b}(\epsilon,\xi)}{\Theta(\xi/\epsilon t)}\frac{d\xi}{\xi}+\int_{\gamma_{3}-\gamma_{4}}\frac{W_{\b}(\epsilon,\xi)}{\Theta(\xi/\epsilon t)}\frac{d\xi}{\xi}   \Big)
\end{equation}
where the path $\gamma_{1}$ is given by $s\in(0,\infty)\mapsto q^s\lambda_{I}$, $\gamma_{2}$ is given by $s\in(0,\infty)\mapsto q^s\lambda_{I'}$, $\gamma_{3}$ is $s\in(-\infty,0)\mapsto q^s\lambda_{I}$ and $\gamma_{4}$ is $s\in(-\infty,0)\mapsto q^s\lambda_{I'}$.

Without loss of generality, we can assume that $|\lambda_{I}|=|\lambda_{I'}|$.

For the first integral we deduce
$$\Big|\int_{\gamma_{1}}\frac{W_{\b}(\epsilon,\xi)}{\Theta(\xi/\epsilon t)}\frac{d\xi}{\xi}\Big|\le|\log(q)|\int_{0}^{\infty}\frac{|W_{\b}(\epsilon,q^{s}\lambda_{I})|}{|\Theta(\frac{q^{s}\lambda_{I}}{\epsilon t})|}ds.$$
Similar estimates as in the first part of the proof lead us to bound the right part of previous inequality by
$$\frac{C'''_{2}}{C_{\xi}}\b!\Big(\frac{|q|^{2A_{1}S}}{\delta_{2}}\Big)^{\b}\Big|\frac{\lambda_{I}}{\epsilon}\Big|^{C\b}|q|^{-A_{1}\b^2+\frac{Ca_{2}}{2a_{1}}\b^2}e^{(M-\frac{\xi}{2\log|q|})\log^2|\lambda_{I}/\epsilon|},$$
for certain $C'''_2>0$.
For any $\overline{\xi}\in(0,1)$ we have
$$\Big|\frac{\lambda_{I}}{\epsilon}\Big|^{C\beta}e^{\overline{\xi}(M-\frac{\xi}{2\log|q|})\log^{2}|\lambda_{I}/\epsilon|}\le e^{\frac{C^{2}\b^{2}}{4\overline{\xi}(\frac{\xi}{2\log|q|}-M)}},\quad \b\ge0.$$
This yields 
\begin{equation}\label{e699}
\int_{\gamma_{1}}\Big|\frac{W_{\b}(\epsilon,q^{s}\lambda_{I})}{\Theta(\frac{q^{s}\lambda_{I}}{\epsilon t})}\Big|ds\le\frac{C'''_{2}}{C_{\xi}}\b! \Big(\frac{|q|^{2A_{1}S}}{\delta_2}\Big)^{\b}|q|^{(-A_{1}+\frac{Ca_{2}}{2a_{1}}+\frac{C^{2}}{4\overline{\xi}(\frac{\xi}{2\log|q|}-M)})\b^2}e^{(1-\overline{\xi})(M-\frac{\xi}{2\log|q|})\log^{2}|\lambda_{I}/\epsilon|}.
\end{equation}
We choose $\overline{\xi}$ as in Assumption (C).

The integral corresponding to the path $\gamma_{2}$ can be bounded following identical steps.

We now give estimates concerning $\gamma_{3}-\gamma_{4}$. It is worth saying that the function in the integrand is well defined for $(\epsilon,\tau)\in D(0,r_{0})\setminus\{0\}\times \dot{D}_{\rho_{0}}$ and does not depend on the index $I\in\mathcal{I}$. This fact and Cauchy Theorem allow us to write for any $n\in\N$
$$\int_{\Gamma_{n}}\frac{W_{\b}(\epsilon,\xi)}{\Theta(\xi/\epsilon t)}\frac{d\xi}{\xi}=0,$$
where $\Gamma_{n}=\gamma_{n,1}+\gamma_{5}-\gamma_{n,2}-\gamma_{n,3}$ is the closed path defined in the following way: $s\in[-n,0]\mapsto\gamma_{n,1}(s)=\lambda_{I}q^{s}$,$\gamma_{5}$ is the arc of circunference from $\lambda_{I}$ to $\lambda_{I'}$, $s\in[-n,0]\mapsto\gamma_{n,2}(s)=\lambda_{I'}q^{s}$ and $\gamma_{n,3}$ is the arc of circunference from $\lambda_{I}q^{-n}$ to $\lambda_{I'}q^{-n}$.
Taking $n\to\infty$ we derive
\begin{equation}\label{aux1}
0=\lim_{n\to\infty}\int_{\Gamma_{n}}\frac{W_{\b}(\epsilon,\xi)}{\Theta(\xi/\epsilon t)}\frac{d\xi}{\xi}=\lim_{n\to\infty}\int_{\gamma_{n,1}+\gamma_{5}-\gamma_{n,2}}\frac{W_{\b}(\epsilon,\xi)}{\Theta(\xi/\epsilon t)}\frac{d\xi}{\xi}-\lim_{n\to\infty}\int_{\gamma_{n,3}}\frac{W_{\b}(\epsilon,\xi)}{\Theta(\xi/\epsilon t)}\frac{d\xi}{\xi}.
\end{equation}
Usual estimates lead us to prove that
\begin{equation}\label{aux2}
\lim_{n\to\infty}\int_{\gamma_{n,3}}\frac{W_{\b}(\epsilon,\xi)}{\Theta(\xi/\epsilon t)}\frac{d\xi}{\xi}=0.
\end{equation}
Moreover,
\begin{equation}\label{aux3}
\lim_{n\to\infty}\int_{\gamma_{n,1}+\gamma_{5}-\gamma_{n,2}}\frac{W_{\b}(\epsilon,\xi)}{\Theta(\xi/\epsilon t)}\frac{d\xi}{\xi}=\int_{\gamma_{3}+\gamma_{5}-\gamma_{4}}\frac{W_{\b}(\epsilon,\xi)}{\Theta(\xi/\epsilon t)}\frac{d\xi}{\xi}.
\end{equation}
From (\ref{aux1}), (\ref{aux2}) and (\ref{aux3}) we obtain
$$\int_{\gamma_{3}-\gamma_{4}}\frac{W_{\b}(\epsilon,\xi)}{\Theta(\xi/\epsilon t)}\frac{d\xi}{\xi}=\int_{-\gamma_{5}}\frac{W_{\b}(\epsilon,\xi)}{\Theta(\xi/\epsilon t)}\frac{d\xi}{\xi}.$$

Taking into account Definition~\ref{definicion536} and (\ref{aa}) we derive

$$\Big|\int_{-\gamma_{5}}\frac{W_{\b}(\epsilon,\xi)}{\Theta(\xi/\epsilon t)}\frac{d\xi}{\xi}\Big|=\Big|\int_{\theta_{I'}}^{\theta_{I}}\frac{W_{\b}(\epsilon,|\lambda_{I}|e^{i\theta})}{\Theta(\frac{|\lambda_{I}|e^{i\theta}}{\epsilon t})}d\theta\Big|,$$
where $\theta_{I}=\hbox{arg}(\lambda_{I})$, $\theta_{I'}=\hbox{arg}(\lambda_{I'})$. This last expression is bounded by
\begin{align*}
&\frac{\hbox{length}(\gamma_{5})C_{3}}{C_{\xi}}\b!\Big(\frac{|q|^{2A_{1}S}}{\delta_{1}}\Big)^{\b}|\epsilon|^{-C\b}\frac{e^{M\log^2\left|\frac{\lambda_{I}}{\epsilon}\right|}}{e^{\frac{\xi}{2\log|q|}\log^2\left|\frac{\lambda_{I}}{\epsilon t}\right|}}|q|^{-A_{1}\b^2}\\
&\le C'_{3}\b!\Big(\frac{|q|^{2A_{1}S}}{\delta_{1}}\Big)^{\b}|\epsilon|^{-C\b}e^{(M-\frac{\xi}{2\log|q|})\log^2\left|\frac{\lambda_{I}}{\epsilon}\right|}|q|^{-A_{1}\b^2}\\
&\le C'_{3}\b!\Big(\frac{|q|^{2A_{1}S}}{\delta_{1}}\Big)^{\b}|\epsilon|^{-C\b}e^{\overline{\xi}(M-\frac{\xi}{2\log|q|})\log^2|\epsilon|}|q|^{-A_{1}\b^2}e^{(1-\overline{\xi})(M-\frac{\xi}{2\log|q|})\log^2|\epsilon|}.
\end{align*}
for adequate positive constants $C_{3},C'_{3}$. From standard estimates we achieve
\begin{equation}\label{e708}
\Big|\int_{\gamma_{3}-\gamma_{4}}\frac{W_{\b}(\epsilon,\xi)}{\Theta(\xi/\epsilon t)}\frac{d\xi}{\xi}\Big|\le C'_3 \b!\Big(\frac{|q|^{2A_{1}S}}{\delta_{1}}\Big)^{\b}|q|^{-A_{1}\b^2}e^{\frac{C^2}{4\overline{\xi}(\frac{\xi}{2\log|q|}-M)}\b^2}e^{(1-\overline{\xi})(M-\frac{\xi}{2\log|q|})\log^2|\epsilon|}.
\end{equation}

From (\ref{e682}), (\ref{aux5}), (\ref{e699}), (\ref{e708}) and Assumption (C.4) we conclude the existence of a positive constant $C'_{1}>0$ such that
$$|X_{I}(\epsilon,t,z)-X_{I'}(\epsilon,t,z)|\le C'_{1}\sum_{\b\ge0}\b!\Big(\frac{|q|^{2A_{1}S}}{\delta_{0}}\Big)^{\b}|q|^{\Big(-A_{1}+\frac{Ca_{1}}{2a_{2}}+\frac{C^2}{4\overline{\xi}\log|q|(\frac{\xi}{2\log|q|}-M)}\Big)\b^2}\times$$
$$\times e^{(1-\overline{\xi})(M-\frac{\xi}{2\log|q|})\log^2|\epsilon|}\frac{\rho^{\b}}{\b!}
\le C_{1}e^{(1-\overline{\xi})(M-\frac{\xi}{2\log|q|})\log^2|\epsilon|},$$
for every $(\epsilon, t, z)\in (U_{I}q^{-\N}\cap U_{I'}q^{-\N})\times\mathcal{T}\times D(0,\rho)$, with $\delta_{0}=\min\{\delta_{1},\delta_{2}\}$.

\end{proof}

\end{subsection}
\end{section}

\begin{section}{A $q-$Gevrey Malgrange-Sibuya type theorem}
In this section we obtain a $q-$Gevrey version of the so called Malgrange-Sibuya theorem which allows us to reach our final main achievement: the existence of a formal series solution of problem (\ref{ee1})+(\ref{ii1}) which asymptotically represents the actual solutions obtained in Theorem~\ref{teo572}, meaning that for every $I\in\mathcal{I}$, $X_{I}$ admits this formal solution as its $q-$Gevrey asymptotic expansion in the variable $\epsilon$.

In~\cite{malek}, a Malgrange-Sibuya type theorem appears with similar aims as in this work. We complete the information there giving bounds on the estimates appearing for the $q-$asymptotic expansion. This mentioned work heavily rests on the theory developed by J-P. Ramis, J. Sauloy and C. Zhang in~\cite{ramissauloyzhang}.

In the present work, although $q-$Gevrey bounds are achieved, the $q-$Gevrey type involved will not be preserved suffering increasement on the way.

The nature of the proof relies in the one concerning classical Malgrange-Sibuya theorem for Gevrey asymptotics which can be found in~\cite{malgrange3}. 

Let $\mathbb{H}$ be a complex Banach space.
\begin{defin}\label{defi679}
Let $U$ be a bounded open set in $\C^{\star}$ and $A>0$. We say a holomorphic function $f:Uq^{-\N}\to \mathbb{H}$ admits $\hat{f}=\sum_{n\ge 0}f_{n}\epsilon^{n}\in\mathbb{H}[[\epsilon]]$ as its $q-$Gevrey asymptotic expansion of type $A$ in $Uq^{-\N}$ if for every compact set $K\subseteq U$ there exist $C_{1},H>0$ such that
$$\left\|f(\epsilon)-\sum_{n=0}^{N}f_{n}\epsilon^n\right\|_{\mathbb{H}}\le C_{1}H^{N}|q|^{A\frac{N^2}{2}}\frac{|\epsilon|^{N+1}}{(N+1)!},\quad N\ge0, $$
for every $\epsilon\in Kq^{-\N}$.
\end{defin}
 
The following proposition can be found, under slightly modifications in Section 4 of~\cite{ramissauloyzhang}.
\begin{prop}\label{prop3}
Let $A>0$ and $U\subseteq\C^{\star}$ be an open and bounded set. Let $f:Uq^{-\N}\to\mathbb{H}$ be a holomorphic function that admits a formal power series $\hat{f}\in\mathbb{H}[[\epsilon]]$ as its $q-$Gevrey asymptotic expansion of type $A$ in $Uq^{-\N}$. Then, if $\hat{f}^{(k)}$ denotes de $k-$th formal derivative of $\hat{f}$ for every $k\in\N$, we have that $f^{(k)}$ admits $\hat{f}^{(k)}$ as its $q-$Gevrey asymptotic expansion of type $A$ in $Uq^{-\N}$.
\end{prop}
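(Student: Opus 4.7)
The plan is to reduce everything to Cauchy's integral formula applied to the remainder
\begin{equation*}
R_N(\epsilon) := f(\epsilon) - \sum_{n=0}^{N} f_n \epsilon^n,
\end{equation*}
after observing that $R_N^{(k)}(\epsilon) = f^{(k)}(\epsilon) - \sum_{m=0}^{N-k} \tfrac{(m+k)!}{m!} f_{m+k}\,\epsilon^m$, which is precisely the $(N-k)$-th remainder of the proposed asymptotic expansion $\hat{f}^{(k)}$ for $f^{(k)}$. Since $Uq^{-\N}=\bigcup_{n\ge 0}q^{-n}U$ is an open subset of $\C^{\star}$, both $f$ and $R_N$ are genuinely holomorphic there, so Cauchy's formula is available.

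Given a compact $K\subseteq U$, I would first fix an $r>0$ small enough that $\overline{D(\epsilon_0,r|\epsilon_0|)}\subseteq U$ for every $\epsilon_0\in K$ (possible because $K$ is compactly contained in $U$ and bounded away from $0$), and take a slightly enlarged compact $K'\subseteq U$ containing the union of these closed discs. By $q$-homogeneity, $\overline{D(\epsilon,r|\epsilon|)}=q^{-n}\overline{D(\epsilon_0,r|\epsilon_0|)}\subseteq K'q^{-\N}$ for every $\epsilon=\epsilon_0q^{-n}\in Kq^{-\N}$, so the contour is always contained in the domain of holomorphy. Plugging the hypothesised bound for $R_N$ on $K'q^{-\N}$ into
\begin{equation*}
R_N^{(k)}(\epsilon)=\frac{k!}{2\pi i}\oint_{|w-\epsilon|=r|\epsilon|}\frac{R_N(w)}{(w-\epsilon)^{k+1}}\,dw,
\end{equation*}
and using $|w|\le (1+r)|\epsilon|$ on the contour, will yield
\begin{equation*}
\|R_N^{(k)}(\epsilon)\|_{\mathbb{H}}\le \frac{k!\,C_1(1+r)^{N+1}}{r^k}\,H^N\,|q|^{AN^2/2}\frac{|\epsilon|^{N+1-k}}{(N+1)!}.
\end{equation*}

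To finish, I would set $N=M+k$ and expand $|q|^{A(M+k)^2/2}=|q|^{AM^2/2}\cdot|q|^{AMk}\cdot|q|^{Ak^2/2}$, using $(M+1)!\le (M+k+1)!$ to replace the factorial. Absorbing the $k$-dependent constants $|q|^{Ak^2/2}$, $H^k$, $(1+r)^{k+1}$, $k!/r^k$ into a new $C_1'$, and the $M$-dependent cross factor $(1+r)^M\cdot|q|^{AMk}$ into a new base $H'=(1+r)H|q|^{Ak}$, gives
\begin{equation*}
\Big\|f^{(k)}(\epsilon)-\sum_{m=0}^{M}\tfrac{(m+k)!}{m!}f_{m+k}\,\epsilon^m\Big\|_{\mathbb{H}}\le C_1'\,(H')^M\,|q|^{AM^2/2}\frac{|\epsilon|^{M+1}}{(M+1)!}
\end{equation*}
on $Kq^{-\N}$, which is exactly the definition of $q$-Gevrey asymptotic expansion of type $A$ from Definition~\ref{defi679}. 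The only subtle point to watch is that the shift $N\mapsto N+k$ changes the exponent of $|q|$ only by a term linear in $M$, so only the base $H$ (not the type $A$, which is the coefficient of $M^2$) is affected; the rest is routine Cauchy-estimate bookkeeping.
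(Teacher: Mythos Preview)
Your argument is correct and follows essentially the same route as the paper, which proves this proposition inside the first step of Lemma~\ref{lema808}: both rely on the geometric fact (the paper cites it as Lemma~4.4.1 of \cite{ramissauloyzhang}) that for $\epsilon\in Kq^{-\N}$ the disc $\overline{D(\epsilon,r|\epsilon|)}$ stays inside an enlarged $K'q^{-\N}$, and then apply Cauchy's integral formula to the remainder. The only cosmetic difference is that the paper factors the remainder as $\epsilon^{m+1}R_m(\epsilon)$ and proceeds by induction on the order of the derivative, whereas you apply the Cauchy estimate for the $k$-th derivative directly and shift $N\mapsto M+k$; your bookkeeping is slightly cleaner but the substance is identical.
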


\begin{prop}\label{prop704}
Let $A>0$ and $f:Uq^{-\N}\to\mathbb{H}$ a holomorphic function in $Uq^{-\N}$. Then,
\begin{enumerate}
\item[i)] If $f$ admits $\hat{0}$ as its $q-$Gevrey expansion of type $A$, then for every compact set $K\subseteq U$ there exists $C_{1}>0$ with
$$\left\|f(\epsilon)\right\|_{\mathbb{H}}\le C_{1}e^{-\frac{1}{\tilde{a}}\frac{1}{2\log|q|}\log^{2}|\epsilon|} ,$$
for every $\epsilon\in Kq^{-\N}$ and every $\tilde{a}>A$.
\item[ii)] If for every compact set $K\subseteq U$ there exists $C_{1}>0$ with
$$\left\|f(\epsilon)\right\|_{\mathbb{H}}\le C_{1}e^{-\frac{1}{A}\frac{1}{2\log|q|}\log^{2}|\epsilon|} ,$$
for every $\epsilon\in Kq^{-\N}$ then $f$ admits $\hat{0}$ as its $q-$Gevrey asymptotic expansion of type $\tilde{a}$ in $Uq^{-\N}$, for every $\tilde{a}>A$.
\end{enumerate}
\end{prop}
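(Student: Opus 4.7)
My plan is to treat both implications by the same principle: balance the $q$-Gevrey factor $|q|^{\gamma N^2/2}$ against the decay $|\epsilon|^{N+1}/(N+1)!$ by choosing the truncation index $N$ of order $|\log|\epsilon||/(\gamma\log|q|)$. This is the $q$-analogue of Watson's optimal-truncation trick.

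For $i)$, I would fix a compact $K\subseteq U$ and $\tilde{a}>A$, and let $C_1,H>0$ be the constants from Definition~\ref{defi679}. Since $\hat f=0$, the defining bound reduces to
\[
\|f(\epsilon)\|_{\mathbb{H}}\le C_1 H^N|q|^{AN^2/2}\frac{|\epsilon|^{N+1}}{(N+1)!},\qquad N\ge 0,\ \epsilon\in Kq^{-\N}.
\]
Write $L=|\log|\epsilon||$, choose an intermediate type $a'$ with $A<a'<\tilde{a}$, and set $N=\lfloor L/(a'\log|q|)\rfloor$. Passing to logarithms and applying Stirling's formula $\log((N+1)!)=(N+1)\log(N+1)-(N+1)+O(\log N)$, the three main contributions combine as
\[
\tfrac{A\log|q|}{2}N^2+(N+1)\log|\epsilon|-\log((N+1)!)\ \approx\ \frac{L^2}{a'\log|q|}\Bigl(\frac{A}{2a'}-1\Bigr),
\]
while the remaining $N\log H$, the rounding of $N$, and the Stirling error contribute only $O(L\log L)$. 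Because $a'>A$ yields $A/(2a')-1<-1/2$, the main term is at most $-L^2/(2a'\log|q|)$, and since $a'<\tilde{a}$ this is at most $-L^2/(2\tilde{a}\log|q|)-cL^2$ for some $c>0$. For $|\epsilon|$ small, $cL^2$ absorbs the $O(L\log L)$ remainder, giving $\|f(\epsilon)\|_{\mathbb{H}}\le C_1' e^{-\log^2|\epsilon|/(2\tilde{a}\log|q|)}$; the finitely many $\epsilon\in Kq^{-n}$ with $n$ below a threshold are absorbed into the constant using boundedness of $f$ on $K$.

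For $ii)$, I would reverse the roles. Given $\|f(\epsilon)\|_{\mathbb{H}}\le C_1 e^{-L^2/(2A\log|q|)}$, I need constants $C_2,\tilde{H}>0$ such that for all $N\ge 0$ and $\epsilon\in Kq^{-\N}$,
\[
C_1 e^{-L^2/(2A\log|q|)}\ \le\ C_2\tilde{H}^N|q|^{\tilde{a}N^2/2}\frac{|\epsilon|^{N+1}}{(N+1)!}.
\]
Setting $\psi(N)=\tfrac{\tilde{a}\log|q|}{2}N^2-(N+1)L+N\log\tilde{H}-\log((N+1)!)$, it suffices to check $\min_{N\ge 0}\psi(N)\ge -L^2/(2A\log|q|)+\log(C_1/C_2)$. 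The continuous minimum of $\psi$ is attained near $N_\ast\approx L/(\tilde{a}\log|q|)$, where $\psi(N_\ast)\approx -L^2/(2\tilde{a}\log|q|)+O(L\log L)$ by the same Stirling computation; adding $L^2/(2A\log|q|)$ produces $\tfrac{L^2}{2\log|q|}(1/A-1/\tilde{a})+O(L\log L)$, which is positive for small $|\epsilon|$ because $\tilde{a}>A$. Away from $N_\ast$ the quadratic $\psi$ is larger, so the inequality is automatic there, and the remaining finite set of pairs $(N,\epsilon)$ is handled by inflating $C_2$ and $\tilde{H}$ using boundedness of $f$ on $K$ and positivity of $\inf_{\epsilon\in K}|\epsilon|$.

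The single real difficulty in both directions is the Stirling remainder: working with integer $N$ and an approximate minimizer introduces an $O(L\log L)$ correction that is negligible in front of an $L^2$-term but not in itself. This is precisely what forces the type to be increased strictly, from $A$ to any $\tilde{a}>A$; the gap $1/A-1/\tilde{a}>0$ provides a positive $L^2$ margin that dominates the logarithmic remainder for $|\epsilon|$ close to the origin. All other steps reduce to compactness of $K$ and direct unfolding of the definition of $q$-Gevrey asymptotics.
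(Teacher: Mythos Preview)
Your proposal is correct and rests on the same optimal-truncation idea as the paper: treat the exponent as a quadratic in the (continuous) truncation index and locate its minimum near $N\approx|\log|\epsilon||/(A\log|q|)$, then use the strict gap $\tilde a>A$ to absorb all lower-order terms. The paper's version is leaner only in that it bypasses Stirling entirely --- in (i) one may simply drop the factorial via $(N+1)!\ge 1$, and in (ii) one absorbs $(N+1)!$ into the slack $|q|^{(\tilde a-a')N^2/2}$ for some $A<a'<\tilde a$ --- so the whole computation reduces to minimizing $G(x)=C_1\exp\bigl(x\log H+\tfrac{A\log|q|}{2}x^2+(x+1)\log|\epsilon|\bigr)$ over $x>0$.
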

\begin{proof}
Let $C_{1},H,A>0$ and $\epsilon\in\C^{\star}$. The function $$G(x)=C_{1}\exp(\log(H)x+\frac{\log|q|A}{2}x^{2}+(x+1)\log|\epsilon|)$$ is minimum for $x>0$ at $x_{0}=\frac{-\log(H)-\log|\epsilon|}{A\log|q|}$. We deduce both results from standard calculations.
\end{proof}

\begin{defin}\label{def745}
Let $(U_{I})_{I\in\mathcal{I}}$ be a good covering at 0 (see Definition~\ref{def527}), and $g_{I,I'}:U_{I}q^{-\N}\cap U_{I'}q^{-\N}\to \mathbb{H}$ a holomorphic function in $U_{I}q^{-\N}\cap U_{I'}q^{-\N}$ for $I,I'\in\mathcal{I}$ when the intersection is not empty. The family $(g_{I,I'})_{(I,I')\in\mathcal{I}^2}$ is a $q-$Gevrey $\mathbb{H}-$cocycle of type $A>0$ attached to a good covering $(U_Iq^{-\N})_{I\in\mathcal{I}}$ if the following properties are satisfied:
\begin{enumerate}
\item[1.] $g_{I,I'}$ admits $\hat{0}$ as its $q-$Gevrey asymptotic expansion of type $A>0$ on $U_{I}q^{-\N}\cap U_{I'}q^{-\N}$ for every $(I,I')\in\mathcal{I}$.
\item[2.] $g_{I,I'}(\epsilon)=-g_{I',I}(\epsilon)$ for every $(I,I')\in\mathcal{I}$, and $\epsilon\in U_{I}q^{-\N}\cap U_{I'}q^{-\N}$.
\item[3.] We have $g_{I,I''}(\epsilon)=g_{I,I'}(\epsilon)+g_{I',I''}(\epsilon)$ for all $\epsilon\in U_{I}q^{-\N}\cap U_{I'}q^{-\N} \cap U_{I''}q^{-\N}$, $I,I',I''\in\mathcal{I}$.
\end{enumerate}
\end{defin}

Let $\rho>0$ and $\mathcal{T}\subseteq\C^{\star}$ be an open and bounded set. $\mathbb{H}_{\mathcal{T},\rho}$ stands for the Banach space of holomorphic and bounded functions in $\mathcal{T}\times D(0,\rho)$ with the supremum norm.

\begin{prop}\label{prop754}
Let $\rho>0$. We consider the family $(X_{I}(\epsilon,t,z))_{I\in\mathcal{I}}$ constructed in Theorem~\ref{teo572}. Then, the set of functions $(g_{I,I'}(\epsilon))_{(I,I')\in\mathcal{I}^2}$ defined by 
$$g_{I,I'}(\epsilon):=(t,z)\in\mathcal{T}\times D(0,\rho)\mapsto X_{I'}(\epsilon,t,z)-X_{I}(\epsilon, t,z)$$
for $I,I'\in\mathcal{I}$ is a $q-$Gevrey $\mathbb{H}_{\mathcal{T},\rho}$-cocycle of type $\tilde{A}$ for every $\tilde{A}>A:=\frac{1}{(1-\overline{\xi})(\frac{\xi}{2\log|q|}-M)}$ attached to the good covering $(U_Iq^{-\N})_{I\in\mathcal{I}}$.
\end{prop}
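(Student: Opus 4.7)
The plan is to verify directly the three defining properties of a $q$-Gevrey cocycle stated in Definition~\ref{def745}. Properties 2 and 3 are purely formal and require no analysis. Antisymmetry $g_{I,I'} = -g_{I',I}$ is immediate from the definition $g_{I,I'} = X_{I'} - X_I$, and the cocycle identity follows from the telescoping
$$g_{I,I'}(\epsilon) + g_{I',I''}(\epsilon) = (X_{I'} - X_I) + (X_{I''} - X_{I'}) = X_{I''} - X_I = g_{I,I''}(\epsilon),$$
valid pointwise on $U_I q^{-\N} \cap U_{I'} q^{-\N} \cap U_{I''} q^{-\N}$.

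The substantive content is property 1: the $q$-Gevrey null expansion. Here I would invoke the flatness estimate supplied by the second part of Theorem~\ref{teo572}, which, whenever $U_I q^{-\N} \cap U_{I'} q^{-\N} \neq \emptyset$, yields a constant $C_1 = C_1(\rho) > 0$ with
$$|X_I(\epsilon,t,z) - X_{I'}(\epsilon,t,z)| \le C_1 e^{-\frac{1}{A}\log^2|\epsilon|}$$
for every $(\epsilon,t,z) \in (U_I q^{-\N} \cap U_{I'} q^{-\N}) \times \mathcal{T} \times D(0,\rho)$, where $\frac{1}{A} = (1-\overline{\xi})(\frac{\xi}{2\log|q|}-M)$. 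Because this bound is uniform in $(t,z)$, taking the supremum over $\mathcal{T} \times D(0,\rho)$ converts it into the Banach-valued estimate
$$\|g_{I,I'}(\epsilon)\|_{\mathbb{H}_{\mathcal{T},\rho}} \le C_1 e^{-\frac{1}{A}\log^2|\epsilon|}$$
on the full intersection, and hence on $Kq^{-\N}$ for every compact $K \subseteq U_I \cap U_{I'}$. Banach-valued holomorphy of $\epsilon \mapsto g_{I,I'}(\epsilon)$ is inherited for free from the joint holomorphy and uniform boundedness of $X_I$ and $X_{I'}$ already established in Theorem~\ref{teo572}.

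With this uniform exponential decay at hand, I would close by applying Proposition~\ref{prop704}(ii) to the $\mathbb{H}_{\mathcal{T},\rho}$-valued function $g_{I,I'}$. Rewriting the exponent as
$$-\frac{1}{A}\log^2|\epsilon| = -\frac{1}{A/(2\log|q|)}\cdot\frac{1}{2\log|q|}\log^2|\epsilon|,$$
the hypothesis of Proposition~\ref{prop704}(ii) is met with parameter $A/(2\log|q|)$ in place of the $A$ appearing there. That proposition then concludes that $g_{I,I'}$ admits $\hat{0}$ as its $q$-Gevrey expansion of type $\tilde{A}$ for every $\tilde{A}$ exceeding the stated threshold.

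The proof contains no genuine obstacle; it amounts to extracting the flatness bound from Theorem~\ref{teo572} and feeding it through Proposition~\ref{prop704}(ii). The only delicate point is the bookkeeping of the type constant: the translation between the exponential decay rate $K$ and the $q$-Gevrey type $1/(2K\log|q|)$ in Proposition~\ref{prop704} absorbs a factor of $2\log|q|$, which is precisely the type enlargement forewarned about at the beginning of this section.
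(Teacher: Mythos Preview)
Your argument is correct and mirrors the paper's own proof exactly: the paper also dispatches properties 2 and 3 as immediate from the construction and obtains property 1 by feeding the flatness estimate of Theorem~\ref{teo572} through Proposition~\ref{prop704}(ii). Your additional observation about the $2\log|q|$ factor in the type constant is a genuine bookkeeping point that the paper's terse proof glosses over; the statement of the proposition appears to identify the threshold with $A$ rather than $A/(2\log|q|)$, but since only the existence of \emph{some} finite $q$-Gevrey type is used downstream, this does not affect the sequel.
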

\begin{proof}
The first property in Definition~\ref{def745} directly comes from Theorem~\ref{teo572} and Proposition~\ref{prop704}. The other two are verified by construction of the cocycle.
\end{proof}
We recall several definitions and an extension result from~\cite{bonetbraunmeisetaylor} which will be crucial in our work.
\begin{defin}A continuous increasing function $w:[0,\infty)\to[0,\infty)$ is a weight function if it satisfies
\begin{enumerate}
\item[$(\alpha)$] there exists $k\ge1$ with $w(2t)\le k(w(t)+1)$ for all $t\ge0$,
\item[($\beta$)] $\int_{0}^{\infty}\frac{w(t)}{1+t^2}dt<\infty,$
\item[($\gamma$)] $\lim_{t\to\infty}\frac{\log t}{w(t)}=0,$
\item[($\delta$)] $\phi:t\mapsto w(e^t)$ is convex.
\end{enumerate}
The Young conjugate associated to $\phi$, $\phi^\star:[0,\infty)\to\R$ of $\phi$ is defined by
$$\phi^{\star}(y):=\sup\{xy-\phi(x):x\ge0\}.$$
\end{defin}
\begin{defin}\label{def773}
Let $K$ be a nonempty compact set in $\R^{2}$. A jet on $K$ is a family $F=(f^{\alpha})_{\a\in\N^{2}}$ where $f^{\a}:K\to\C$ is a continuous function on $K$ for each $\a\in\N^{2}$.

Let $w$ be a weight function. A jet $F=(f^{\a})_{\a\in\N^{2}}$ on $K$ is said to be a $w-$Whitney jet (of Roumieu type) on $K$ if there exist $m>0$ and $M>0$ such that
$$\left\|f\right\|_{K,1/m}:=\sup_{x\in K,\a\in\N^{2}}|f^{\a}(x)|\exp(-\frac{1}{m}\phi^{\star}(m|\a|))\le M,$$
and for every $l\in\N$, $\a\in\N^{2}$ with $|\alpha|\le l$ and $x,y\in K$ one has
$$|(R^{l}_{x}F)_{\alpha}(y)|\le M\frac{|x-y|^{l+1-|\alpha|}}{(l+1-|\a|)!}\exp(\frac{1}{m}\phi^{\star}(m(l+1))), $$
where $(R_{x}^{l}F)_{\a}(y):=f^{\a}(y)-\sum_{|\a+\b|\le l}\frac{1}{\b!}f^{\a+\b}(x)(y-x)^{\b}$.
\end{defin}

$\mathcal{E}_{\{w\}}(K)$ denotes the linear space of $w-$Whitney jets on $K$.

\begin{defin}
Let $K\subseteq\R^2$ be a nonempty compact set and $w$ a weight function in $K$. A continuous function $f:K\to\C$ is $w-\mathcal{C}^{\infty}$ in the sense of Whitney in $K$ if there exists a $w-$Whitney jet on $K$, $(f^{\alpha})_{\alpha\in\N^2}$ such that $f^{(0,0)}=f$.

For an open set $\Omega\in\R^{2}$ we define
$$\mathcal{E}_{\{w\}}(\Omega):=\{f\in\mathcal{C}^{\infty}(\Omega): \forall K \subseteq \Omega, K\hbox{ compact },\exists m>0, \left\|f\right\|_{K,1/m}<\infty\}.$$
\end{defin}

The following result establishes conditions on a weight function so that a jet in $\mathcal{E}_{\{w\}}(K)$ can be extended to an element in $\mathcal{E}_{\{w\}}(\R^2)$ if and only if $w$ is a strong weight function.
\begin{theo}[Corollary 3.10,~\cite{bonetbraunmeisetaylor}]\label{teo792}
For a given weight function $w$, the following statements are equivalent:
\begin{enumerate}
\item For every nonempty closed set $K$ in $\R^{2}$ the restriction map sending a function $f\in\mathcal{E}_{\{w\}}(\R^{2})$ to the family of derivatives of $f$ in $K$, $(f^{(\alpha)}|_{K})_{\a\in\N^{2}}\in\mathcal{E}_{\{w\}}(K)$ is a surjective map.
\item w is a strong weight function, it is to say, 
$$\lim_{\epsilon\to 0^+}\overline{\lim_{t\to\infty}}\frac{\epsilon w(t)}{w(\epsilon t)}=0.$$
\end{enumerate}
\end{theo}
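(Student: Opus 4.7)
Since Theorem~\ref{teo792} is a Whitney-type extension theorem in the Roumieu ultradifferentiable category, I would organise the plan around the two directions of the equivalence.

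For the substantive direction $(2) \Rightarrow (1)$, I would adapt Whitney's classical extension construction to the ultradifferentiable setting. Given a $w$-Whitney jet $F = (f^{\alpha})_{\alpha \in \N^{2}}$ on a closed set $K \subseteq \R^{2}$, I perform a Whitney-type decomposition of $\R^{2} \setminus K$ into countably many closed dyadic cubes $(Q_{j})_{j}$ whose diameters are comparable to $\mathrm{dist}(Q_{j},K)$. For each $Q_{j}$ I pick a point $x_{j} \in K$ near $Q_{j}$ and a truncation index $l_{j}$ depending on $\mathrm{diam}(Q_{j})$ and on the weight $w$, and define the candidate extension by
$$
f(y) := \sum_{j} \varphi_{j}(y)\, T_{x_{j}}^{l_{j}}F(y), \qquad y \in \R^{2}\setminus K,
$$
where $T_{x_{j}}^{l_{j}}F$ is the Taylor polynomial of the jet $F$ based at $x_{j}$ truncated at order $l_{j}$, and $(\varphi_{j})_{j}$ is a $w$-ultradifferentiable partition of unity subordinate to a slight enlargement of the $Q_{j}$'s. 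The very construction of such a partition is a preliminary step relying on the strong weight hypothesis (through the existence of $w$-cutoff functions with controlled $\phi^{\star}$-bounds). I would then verify that $f \in \mathcal{E}_{\{w\}}(\R^{2})$ and that the jet of $f$ at any point of $K$ reproduces $F$, using the Whitney remainder estimates of Definition~\ref{def773} to control the error between $T_{x_{j}}^{l_{j}}F$ and the local Taylor data at neighbouring points.

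For the converse $(1) \Rightarrow (2)$ I would argue by contrapositive: assuming $w$ is not a strong weight, I would design a closed set such as $K = \{0\} \cup \{1/n : n \ge 1\}$ and construct on it a $w$-Whitney jet $F$ whose coefficients at the isolated points saturate the bound $\exp(\tfrac{1}{m}\phi^{\star}(m|\alpha|))$, in such a way that any putative extension $f \in \mathcal{E}_{\{w\}}(\R^{2})$ would have to exhibit derivatives near $0$ exceeding the Roumieu bound $\exp(\tfrac{1}{m'}\phi^{\star}(m'|\alpha|))$ for every admissible $m'$. The comparison between the two bounds is governed, through the Legendre duality between $\phi$ and $\phi^{\star}$, precisely by the quantity $\lim_{\epsilon \to 0^{+}} \overline{\lim}_{t \to \infty} \epsilon w(t)/w(\epsilon t)$, whose non-vanishing would obstruct extendability.

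The main obstacle is the quantitative derivative control in $(2) \Rightarrow (1)$, where three sources of growth must be absorbed simultaneously into a single Roumieu bound of the form $\exp(\tfrac{1}{m'}\phi^{\star}(m'|\alpha|))$: the intrinsic $\phi^{\star}$-bound on the jet $F$, the Whitney remainder of order $\exp(\tfrac{1}{m}\phi^{\star}(m(l+1)))$ coming from the truncation of $T_{x_{j}}^{l_{j}}F$, and the derivatives of the partition of unity $(\varphi_{j})_{j}$, which themselves introduce further $\phi^{\star}$-type losses at each differentiation. The strong weight condition, combined with the convexity of $\phi$ and the Legendre identity $\phi^{\star\star} = \phi$, is exactly what converts these three losses into a finite overall excess $m \mapsto m'$; without it one is forced into a logarithmic defect that makes the bounds diverge as $|\alpha| \to \infty$, precluding membership in $\mathcal{E}_{\{w\}}(\R^{2})$.
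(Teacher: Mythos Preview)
The paper does not give its own proof of this statement: Theorem~\ref{teo792} is quoted verbatim as Corollary~3.10 of~\cite{bonetbraunmeisetaylor} and is used as a black box in the subsequent argument (see Theorem~\ref{teo827} and Proposition~\ref{prop896}). There is therefore nothing in the present paper to compare your proposal against.

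That said, your outline is a faithful sketch of the strategy actually carried out in~\cite{bonetbraunmeisetaylor}: the implication $(2)\Rightarrow(1)$ proceeds via a Whitney cube decomposition of $\R^{2}\setminus K$, Taylor truncations with orders adapted to the distance to $K$, and an ultradifferentiable partition of unity whose existence hinges on the non-quasianalyticity condition~$(\beta)$; the strong weight hypothesis is what permits the three losses you identify to be recombined into a single Roumieu bound. The converse $(1)\Rightarrow(2)$ is likewise obtained in~\cite{bonetbraunmeisetaylor} by an obstruction argument on a discrete closed set. Your proposal is thus consistent with the cited source, but for the purposes of this paper no proof is expected beyond the citation.
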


Let $k_{1}=\frac{1}{4\log|q|}$. We consider the weight function defined by $w_{0}(t)=k_{1}\log^2(t)$ for $t\ge 1$ and $w_{0}(t)=0$ for $0\le t\le1$. As the authors write in~\cite{bonetbraunmeisetaylor}, the value of a weight function near the origin is not relevant for the space of functions generated in the sequel.

The following lemma can be easily verified.
\begin{lemma}
$w_{0}$ is a weight function.
\end{lemma}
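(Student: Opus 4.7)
The plan is to verify in turn each of the four defining properties $(\alpha)$--$(\delta)$ of a weight function. Since $w_0$ is piecewise defined with breakpoint at $t=1$, I will first note that $w_0$ is continuous there (both pieces take the value $0$ at $t=1$) and non-decreasing on $[0,\infty)$, so the basic hypotheses of the definition are satisfied. The bulk of the work is then checking the four analytic conditions; none requires a subtle idea, but I must treat carefully the regions $t\le 1$ and $t\ge 1$ separately.

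For property $(\alpha)$, the key observation on $t\ge 1$ is that $\log^2(2t)=(\log 2+\log t)^2\le 2\log^22+2\log^2 t$, which gives $w_0(2t)\le 2w_0(t)+2k_1\log^22$; on the remaining range $0\le t\le 1$ the inequality $w_0(2t)\le k_1\log^22$ is trivial, so the constant $k:=\max\{2,\,2k_1\log^22\}$ works uniformly. For $(\beta)$, the integral $\int_1^\infty k_1\log^2(t)/(1+t^2)\,dt$ converges by the usual comparison with $t^{-3/2}$ for large $t$, while the integrand vanishes on $[0,1]$. For $(\gamma)$, $\log(t)/w_0(t)=1/(k_1\log t)\to 0$ as $t\to\infty$.

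The only point deserving a little attention is property $(\delta)$. I compute $\phi(t):=w_0(e^t)$ piecewise: $\phi(t)=0$ for $t\le 0$ and $\phi(t)=k_1 t^2$ for $t\ge 0$. Each piece is convex on its domain, and convexity across the breakpoint at $t=0$ follows from the matching of values ($\phi(0)=0$) together with the matching of one-sided derivatives (both equal to $0$, since $(k_1 t^2)'\big|_{t=0^+}=0$). Thus $\phi$ is convex on $\R$.

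I do not anticipate any serious obstacle; the verification is essentially mechanical, and the only mild subtlety is ensuring that the piecewise definition (vanishing on $[0,1]$) does not spoil convexity of $\phi$ at the junction $t=0$ or continuity of $w_0$ at $t=1$. Both issues are resolved by the fact that $\log^2(1)=0$ and that the right-derivative of $k_1 t^2$ at zero is zero.
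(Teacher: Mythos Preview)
Your verification is correct and complete. The paper itself provides no proof, merely asserting that the lemma ``can be easily verified''; your systematic check of continuity, monotonicity, and the four conditions $(\alpha)$--$(\delta)$ is exactly what is needed, and your handling of the junction at $t=0$ for the convexity of $\phi$ is the only mildly delicate point, which you resolve correctly.
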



Under this definition of $w_{0}$ we have 
$$\phi_{w_{0}}^{\star}(y)=\sup\{xy-\phi_{w_{0}}(x):x\ge 0\}=\sup\{xy-\frac{x^2}{4\log|q|}:x\ge 0\}=\log|q| y^2,\quad y\ge0.$$

The spaces appearing in Definition~\ref{def773} concerning this weight function are the following: for any nonempty compact set $K\subseteq \R^2$,
$\mathcal{E	}_{\{w_{0}\}}(K)$ is the set of $w_{0}$-Whitney jets on $K$, which consists of every jet $F=(f^{\a})_{\a\in\N^{2}}$ on $K$ such that there exist $m\in\N$, $M>0$ with
$$|f^{\a}(x)|\le M|q|^{m|\a|^{2}},\quad x\in K,\a\in\N^2$$
and such that for  every $l\in\N$ and $\a\in\N^2$ with $|\a|\le l$ we have
$$|(R_{x}^{l}F)_{\a}(y)|\le M\frac{|x-y|^{l+1-|\a|}}{(l+1-|\a|)!}|q|^{m(l+1)^2},\quad x,y\in K. $$

We derive that $\mathcal{E}_{\{w_{0}\}}(K)$ consists of the Whitney jets on $K$ such that there exist $C_1,H>0$ with
\begin{equation}\label{e820}
|f^{\a}(x)|\le C_{1}H^{|\a|}|q|^{A\frac{|\a|^2}{2}},\quad x\in K,\a\in\N^2,
\end{equation}
and for every $x,y\in K$ and all $l\in\N,\a\in\N^2$ with $|\a|\le l$
\begin{equation}\label{e824}
|(R_{x}^{l}F)_{\a}(y)|\le C_{1}H^l|q|^{A\frac{l^2}{2}}\frac{|x-y|^{l+1-|\a|}}{(l+1-|\a|)!}.
\end{equation}
\begin{theo}\label{teo827}
$w_{0}$ is a strong weight function so that Theorem~\ref{teo792} holds.
\end{theo}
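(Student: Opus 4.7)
The plan is to verify directly the strong weight function condition
$$\lim_{\epsilon\to 0^+}\overline{\lim_{t\to\infty}}\frac{\epsilon w_{0}(t)}{w_{0}(\epsilon t)}=0$$
for the explicit choice $w_0(t)=k_1\log^2(t)$ on $t\ge1$ (and $w_0\equiv 0$ on $[0,1]$). Once this is established, the equivalence furnished by Theorem~\ref{teo792} immediately yields the desired surjectivity of the jet restriction map in $\mathcal{E}_{\{w_0\}}$, which is the second part of the claim.

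I would fix $\epsilon\in(0,1)$ and consider $t$ large enough that $\epsilon t>1$, so that both numerator and denominator are non-zero; then
$$\frac{\epsilon w_0(t)}{w_0(\epsilon t)}=\frac{\epsilon k_1\log^2 t}{k_1(\log\epsilon+\log t)^2}=\epsilon\cdot\frac{\log^2 t}{(\log\epsilon+\log t)^2}.$$
Since $\log\epsilon$ is a constant while $\log t\to\infty$, the second factor tends to $1$, so the $\limsup$ in $t$ equals exactly $\epsilon$. Passing to $\epsilon\to 0^+$ yields the value $0$, which is the strong-weight condition. The case $\epsilon\ge 1$ is irrelevant because only $\epsilon\to 0^+$ enters the condition.

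Having verified the strong-weight property, the conclusion of Theorem~\ref{teo792} applies: for every nonempty closed set $K\subset\R^2$, every $w_0$-Whitney jet on $K$ extends to an element of $\mathcal{E}_{\{w_0\}}(\R^2)$. I do not foresee any real obstacle here; the argument is essentially a one-line computation exploiting the explicit logarithmic form of $w_0$, together with a direct invocation of the Bonet--Braun--Meise--Taylor extension theorem quoted in Theorem~\ref{teo792}. The only mildly subtle point to note is that $w_0$ vanishes on $[0,1]$, but this does not affect the limit (which concerns $t\to\infty$) and, as remarked in the excerpt, the values of a weight function near the origin do not influence the associated Roumieu classes.
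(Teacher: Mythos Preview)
Your proof is correct and follows exactly the same approach as the paper: compute the ratio $\epsilon\log^2 t/\log^2(\epsilon t)$, observe it tends to $\epsilon$ as $t\to\infty$, and let $\epsilon\to 0^+$. The paper's proof is in fact even terser than yours, consisting of the single displayed line of limits without your additional (but harmless) remarks about the regions $\epsilon\ge 1$ and $t\le 1/\epsilon$.
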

\begin{proof}


$$\lim_{\epsilon\to0^+}\lim_{t\to\infty}\frac{\epsilon w(t)}{w(\epsilon t)}=\lim_{\epsilon\to 0^+}\lim_{t\to\infty}\frac{\epsilon k_{1}\log^2(t)}{k_{1}\log^2(\epsilon t)}=\lim_{\epsilon\to0^+}\epsilon=0.$$
\end{proof}

\textbf{Remark:} A continuous function $f$ which is $w_{0}-\mathcal{C}^{\infty}$ in the sense of Whitney on a compact set $K$ is indeed $\mathcal{C}^{\infty}$ in the usual sense in $\hbox{Int}(K)$ and verifies $q-$Gevrey bounds of the same type. Moreover, we have
$$f^{k}(x,y)=\partial_{x}^{k_{1}}\partial_{y}^{k_{2}}f(x,y),$$
for every $k=(k_{1},k_{2})\in\N^2$ and $(x,y)\in\hbox{Int}(K)$.

Next result is an adaptation of Lemma 4.1.2 in~\cite{ramissauloyzhang}. Here, we need to determine bounds in order to achieve a $q-$Gevrey type result. 
\begin{lemma}\label{lema808}
Let $U$ be an open set in $\C^{\star}$ and $f:Uq^{-\N}\to\mathbb{H}$ a holomorphic function with $\hat{f}=\sum_{h\ge0}a_h\epsilon^h\in\mathbb{H}[[\epsilon]]$ being its $q$-Gevrey asymptotic expansion of type $A>0$ in $Uq^{-\N}$. Then, for any $n\in\N$, the family $\partial_{\epsilon}^{n}f(\epsilon)$ of $n-$complex derivatives of $f$ satisfies that for every compact set $K\subseteq U$ and $k,m\in\N$ with $k\le m$, there exist $C_1,H>0$ such that 
\begin{equation}\label{e810}
\left\|\partial_{\epsilon}^{k}f(\epsilon_{a})-\sum_{h=0}^{m-k}\frac{\partial_{\epsilon}^{k+h}f(\epsilon_{b})}{h!}(\epsilon_{a}-\epsilon_{b})^{h}\right\|_{\mathbb{H}}\le C_{1}H^{m}|q|^{A\frac{m^2}{2}}\frac{|\epsilon_{a}-\epsilon_{b}|^{m+1-k}}{(m+1-k)!},
\end{equation}
for every $\epsilon_{a},\epsilon_{b}\in Kq^{-\N}\cup\{0\}$. Here, we write $\partial^{l}_{\epsilon}f(0)=l!a_{l}$ for $l\in\N$.
\end{lemma}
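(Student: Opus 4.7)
The plan is to reduce the general Whitney-type Taylor remainder (\ref{e810}) to the special case $\epsilon_b=0$, which is already controlled by Proposition~\ref{prop3} applied to $\partial_\epsilon^k f$; the essential technical point is to carry out this reduction with constants $C_1,H$ that stay uniform in both indices $k$ and $m$.

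I would first establish a Cauchy-type derivative estimate: for every compact $K\subseteq U$ there exist $C,H>0$, independent of $l$, such that $\|\partial_\epsilon^l f(\epsilon)\|_{\mathbb{H}}\le CH^l|q|^{Al^2/2}$ on $Kq^{-\N}\cup\{0\}$. Indeed, comparing Definition~\ref{defi679} with $N=l$ and $N=l-1$ and letting $\epsilon\to 0$ gives $l!\|a_l\|_{\mathbb{H}}\le C_0 H_0^l|q|^{Al^2/2}$. Choose a slightly enlarged compact $K'$ with $K\subset\mathrm{Int}(K')\subseteq U$ and $r>0$ so that $\overline{D(\epsilon_0,r)}\subseteq K'$ for every $\epsilon_0\in K$. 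The truncation $g_m(\eta):=f(\eta)-\sum_{p=0}^m a_p\eta^p$ satisfies $\partial_\eta^k g_m(\epsilon)=R_{k,m}(\epsilon,0)$, and Cauchy's formula on the circle $|\zeta-\epsilon|=r|q|^{-n}$ around $\epsilon=\epsilon_0 q^{-n}\in Kq^{-\N}$ (which sits inside $K'q^{-n}\subseteq Uq^{-\N}$), combined with the asymptotic bound on $g_m$ and the inequality $k!(m-k+1)!\le(m+1)!$, then delivers the uniform-in-$(k,m)$ estimate
\[
\|R_{k,m}(\epsilon,0)\|_{\mathbb{H}}\le C_1 H_1^m|q|^{Am^2/2}\,\frac{|\epsilon|^{m-k+1}}{(m-k+1)!},\qquad 0\le k\le m,\ \epsilon\in Kq^{-\N}\cup\{0\},
\]
which is (\ref{e810}) in the special case $\epsilon_b=0$; the global derivative bound is then the case $l=k=m$ combined with $\partial_\epsilon^l f=R_{l,l}(\cdot,0)+l!a_l$.

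For the general case I would use the algebraic identity
\[
R_{k,m}(\epsilon_a,\epsilon_b)=R_{k,m}(\epsilon_a,0)-\sum_{h=0}^{m-k}\frac{(\epsilon_a-\epsilon_b)^h}{h!}R_{k+h,m}(\epsilon_b,0),
\]
obtained by expanding each $\partial_\epsilon^{k+h}f(\epsilon_b)$ as its degree-$(m-k-h)$ Taylor polynomial at $0$ plus $R_{k+h,m}(\epsilon_b,0)$ and rearranging via $((\epsilon_a-\epsilon_b)+\epsilon_b)^p=\epsilon_a^p$. A geometric dichotomy then closes the argument. In the \emph{separated} range $|\epsilon_a-\epsilon_b|\ge c\max(|\epsilon_a|,|\epsilon_b|)$, with $c=c(K,U)>0$ chosen small, plugging the previous bound into this identity and summing via the binomial formula yields $\|R_{k,m}(\epsilon_a,\epsilon_b)\|_{\mathbb{H}}\le C'(H_1(1+c^{-1}))^m|q|^{Am^2/2}|\epsilon_a-\epsilon_b|^{m-k+1}/(m-k+1)!$ directly. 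In the complementary \emph{close} range $|\epsilon_a-\epsilon_b|<c\max(|\epsilon_a|,|\epsilon_b|)$, a sufficiently small $c$ forces $\epsilon_a,\epsilon_b$ to lie in a common convex cell $K'q^{-n}\subseteq Uq^{-\N}$, so the integral form of Taylor's theorem applies and bounding $\|\partial_\epsilon^{m+1}f(\zeta)\|_{\mathbb{H}}$ by the derivative estimate above gives (\ref{e810}) after absorbing $|q|^{A(2m+1)/2}$ into $H^m$.

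The main obstacle is the simultaneous uniformity of the constants in $k$ and $m$: Proposition~\ref{prop3} applied termwise produces constants depending on $k$, and the Cauchy representation of the first step is precisely the mechanism that replaces this $k$-dependence by a multiplicative factor of the form $(L/r)^k$ that is absorbable into $H^m$. A secondary technicality is checking that the close-pair regime really does confine both points to a common convex component of $Uq^{-\N}$; this comes down to a standard scaling argument using the geometry of the spirals $K'q^{-\N}$ encoded in the good-covering construction of Definition~\ref{def527}.
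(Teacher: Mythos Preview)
Your approach is correct and follows essentially the same architecture as the paper's: a Cauchy estimate on discs of radius proportional to $|\epsilon|$ to obtain the base case $\epsilon_b=0$ with constants uniform in $k\le m$, followed by the same close/far dichotomy on $|\epsilon_a-\epsilon_b|$ versus $|\epsilon_b|$, with Taylor's integral remainder in the close regime. The one substantive difference lies in the far regime: the paper writes $f=p+\epsilon^{m+1}R_m$ with $p$ the degree-$m$ Taylor polynomial at $0$, observes the polynomial part satisfies (\ref{e810}) trivially, and reduces to $\epsilon^{m+1}R_m$ via the $q$-Gevrey bounds already established for $\partial_\epsilon^jR_m$; you instead invoke the telescoping identity $R_{k,m}(\epsilon_a,\epsilon_b)=R_{k,m}(\epsilon_a,0)-\sum_h\frac{(\epsilon_a-\epsilon_b)^h}{h!}R_{k+h,m}(\epsilon_b,0)$. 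Both are standard Whitney-jet devices and yield the same estimate.

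Two minor points to tighten. First, in the close case you speak of a ``common convex cell $K'q^{-n}$'', but $K'q^{-n}$ need not be convex; what is actually needed (and what the paper uses) is the disc inclusion $\overline{D}(\epsilon,\rho|\epsilon|)\subseteq K_1q^{-\N}$ from Lemma~4.4.1 of \cite{ramissauloyzhang}, which provides the convex set on which the integral remainder is taken. Second, this geometric fact is unrelated to the good-covering Definition~\ref{def527}; the lemma applies to arbitrary compacts $K\subset K_1\subset U$ and is purely a local scaling property of the spirals $Kq^{-\N}$.
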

\begin{proof}
We will first state the result when $\epsilon_{b}=0$. Indeed, we prove in this first step that the family of functions with $q-$Gevrey asymptotic expansion of type $A>0$ in a fixed $q-$spiral is closed under derivation. Proposition~\ref{prop3} turns out to be a particular case of this result.\\ 
Let $m\in\N$, $K$ be a compact set in $U$ and consider another compact set $K_{1}$ such that $K\subseteq K_{1}\subseteq U$. We define 
$$R_{m}(\epsilon):=\epsilon^{-m-1}(f(\epsilon)-\sum_{h=0}^{m}\frac{\partial^{h}_{\epsilon}f(0)}{h!}\epsilon^{h}),\quad \epsilon\in Kq^{-\N},$$
where $\partial_{\epsilon}^{h}f(0)$ denotes the limit of $\partial_{\epsilon}^{h}f(\epsilon)$ for $\epsilon\in Kq^{-\N}$. Then we have that 
\begin{equation}\label{e00}
\partial_{\epsilon}f(\epsilon)=\sum_{h=1}^{m}\frac{\partial_{\epsilon}^{h}f(0)}{h!}h\epsilon^{h-1}+(\partial_{\epsilon}R_{m}(\epsilon))\epsilon^{m+1}+(m+1)R_{m}(\epsilon)\epsilon^{m}.
\end{equation}
Moreover, from Definition~\ref{defi679}, there exist $C,H>0$ such that $\left\|R_{m}(\epsilon)\right\|\le C H^{m}\frac{|q|^{A\frac{m^2}{2}}}{(m+1)!}$ for every $\epsilon\in K_1q^{-\N}$.

\begin{lemma}[Lemma 4.4.1~\cite{ramissauloyzhang}]
There exists $\rho>0$ such that $\overline{D}(\epsilon,\rho|\epsilon|)\subseteq K_{1}q^{-\N}$ for every $\epsilon\in Kq^{-\N}$.
\end{lemma}

Cauchy's integral formula and $q-$Gevrey expansion of $f$ guarantee the existence of a positive constant $C_{2}>0$ such that $$\left\|\partial_{\epsilon}R_{m}(\epsilon)\right\|_{\mathbb{H}}\le C_{2}H^{m}\frac{|q|^{A\frac{m^2}{2}}}{(m+1)!}\frac{1}{\rho|\epsilon|},\quad\epsilon\in Kq^{-\N},$$ 
This yields the existence of $C_{3}>0$ such that
\begin{align*}
\left\|\epsilon^{-m}(\partial_{\epsilon}f(\epsilon)-\sum_{h=0}^{m-1}\frac{\partial_{\epsilon}^{h+1}f(0)}{h!}\epsilon^{h})\right\|_{\mathbb{H}}&\le\left\|\partial_{\epsilon}R_{m}(\epsilon)\right\|_{\mathbb{H}}|\epsilon|+(m+1)\left\|R_{m}(\epsilon)\right\|_{\mathbb{H}}\\
&\le C_{2}A_{1}^{m}\frac{|q|^{A\frac{m^2}{2}}}{m!},\quad\epsilon\in Kq^{-\N}.
\end{align*}
An induction reasoning is sufficient to conclude the proof for every $m\ge 0$.

We now study the case where $\epsilon_{b}\neq 0$ and only give details for $k=0$. For $k\ge 1$ one only has to take into account that the derivatives of $f$ also admit $q-$Gevrey asymptotic expansion of type $A$ and consider the function $\partial_{\epsilon}^{k}f$.\\
If $\epsilon_{b}\neq0$ we treat two cases:\\
If $|\epsilon_{a}-\epsilon_{b}|\le \rho|\epsilon_{b}|$, then $[\epsilon_{a},\epsilon_{b}]$ is contained in $K_{1}q^{-\N}$ and we conclude from Cauchy's integral formula.

If $|\epsilon_{a}-\epsilon_{b}|>\rho|\epsilon_{b}|$, then we bear in mind that the result is obvious when $f$ is a polynomial and write $f(\epsilon)=\epsilon^{m+1}R_{m}(\epsilon)+p(\epsilon)$ where $p(\epsilon)=\sum_{h=0}^{m}\frac{\partial_{\epsilon}^{h}f(0)}{h!}\epsilon^h$. So,  it is sufficient to prove (\ref{e810}) when $f(\epsilon):=\epsilon^{m+1}R_{m}(\epsilon)$.
The result follows from $q-$Gevrey bounds for $\left\|\partial_{\epsilon}^{k}R_{m}\right\|_{\mathbb{H}}$, $k=0,...,n$ and usual estimates.
\end{proof}

The following lemma generalizes Lemma 6 in~\cite{malek}.
\begin{lemma}\label{lema879}
Let $f:Uq^{-\N}\to\mathbb{H}$ be a holomorphic function having $\hat{f}(\epsilon)=\sum_{h\ge0}a_{h}\epsilon^h\in\mathbb{H}[[\epsilon]]$ as its $q-$Gevrey asymptotic expansion of type $A>0$ on $Uq^{-\N}$. Let $K\subseteq U$ be a compact set. Then, the function $(\epsilon_{1},\epsilon_{2})\mapsto \phi(\epsilon_{1}+i\epsilon_{2})=f(\epsilon_{1},\epsilon_{2})$ is a $w_{0}-\mathcal{C}^{\infty}$ function in the sense of Whitney on the compact set 
$$ K'=\{(\epsilon_{1},\epsilon_{2})\in\R^2:\epsilon_{1}+i\epsilon_{2}\in Kq^{-\N}\cup\{0\}\}.$$
\end{lemma}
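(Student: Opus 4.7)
The plan is to construct a $w_0$-Whitney jet $F = (f^\alpha)_{\alpha \in \N^2}$ on $K'$ with $f^{(0,0)} = \phi$, satisfying the two bounds (\ref{e820}) and (\ref{e824}). The natural choice, imposed by the Cauchy-Riemann equations, is
$$ f^\alpha(\epsilon_1,\epsilon_2) := i^{\alpha_2}\,\partial_\epsilon^{|\alpha|} f(\epsilon_1+i\epsilon_2) $$
for $\epsilon_1+i\epsilon_2 \in Kq^{-\N}$, extended to the origin by $f^\alpha(0,0) := i^{\alpha_2}|\alpha|!\,a_{|\alpha|}$, which is precisely the value of $\partial_\epsilon^{|\alpha|}\hat{f}$ at $0$. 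Continuity of each $f^\alpha$ on $K'$ then follows from Proposition~\ref{prop3}, since $\partial_\epsilon^{|\alpha|}f(\epsilon) \to |\alpha|!\,a_{|\alpha|}$ as $\epsilon \to 0$ in $Kq^{-\N}$.

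For the pointwise bound (\ref{e820}), I apply Lemma~\ref{lema808} with $k = m = |\alpha|$, $\epsilon_a \in Kq^{-\N}$ and $\epsilon_b = 0$, which yields
$$ \|\partial_\epsilon^{|\alpha|} f(\epsilon_a) - |\alpha|!\,a_{|\alpha|}\|_\mathbb{H} \le C_1 H^{|\alpha|} |q|^{A|\alpha|^2/2}\,|\epsilon_a|. $$
Since $|\epsilon_a|$ is bounded on $K$, it remains to bound $\|a_{|\alpha|}\|$. A bound of the form $\|a_n\|_\mathbb{H} \le C_1' H^n |q|^{An^2/2}/n!$ for the Taylor coefficients is derived by combining the defining estimate of Definition~\ref{defi679} at consecutive orders $N$ and $N-1$ and letting $\epsilon \to 0$ along $Kq^{-\N}$. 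Together this gives (\ref{e820}) after adjusting constants.

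For the remainder estimate (\ref{e824}), the key identity is that, for holomorphic $f$, the real partial derivatives of $\phi$ satisfy $\partial_{\epsilon_1}^{\alpha_1}\partial_{\epsilon_2}^{\alpha_2}\phi = i^{\alpha_2}\partial_\epsilon^{|\alpha|} f$. Expanding the Whitney Taylor polynomial $\sum_{|\alpha+\beta| \le l} \tfrac{1}{\beta!} f^{\alpha+\beta}(x)(y-x)^\beta$ and regrouping terms of each degree $h = |\beta|$ via the binomial identity
$$ \sum_{\beta_1+\beta_2=h} \frac{(y_1-x_1)^{\beta_1}(i(y_2-x_2))^{\beta_2}}{\beta_1!\,\beta_2!} = \frac{(\eta-\xi)^h}{h!}, $$
where $\xi := x_1+ix_2$ and $\eta := y_1+iy_2$, one obtains
$$ (R_x^l F)_\alpha(y) = i^{\alpha_2}\Big[\partial_\epsilon^{|\alpha|} f(\eta) - \sum_{h=0}^{l-|\alpha|} \frac{\partial_\epsilon^{|\alpha|+h} f(\xi)}{h!}(\eta-\xi)^h\Big]. $$
The bracket is exactly the quantity controlled by Lemma~\ref{lema808} with $k = |\alpha|$, $m = l$, $\epsilon_a = \eta$, $\epsilon_b = \xi$, yielding (\ref{e824}) since $|\eta-\xi| = |y-x|$.

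The main obstacle lies in this reduction of the real Whitney remainder to the complex Taylor remainder of Lemma~\ref{lema808}; once the Cauchy-Riemann identification and the binomial collapse above are in place, the estimate is immediate. The boundary case where $\xi$ or $\eta$ equals $0$ requires no separate argument, since Lemma~\ref{lema808} already incorporates the convention $\partial_\epsilon^l f(0) = l!\,a_l$, so the jet $F$ is verified to be a $w_0$-Whitney jet on $K'$ with $f^{(0,0)} = \phi$, which is the claim.
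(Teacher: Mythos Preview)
Your proposal is correct and follows essentially the same approach as the paper: you define the jet by $f^{\alpha}=i^{\alpha_2}\partial_\epsilon^{|\alpha|}f$, extend to the origin via the formal coefficients, and then reduce both Whitney conditions (\ref{e820}) and (\ref{e824}) to the complex Taylor estimate of Lemma~\ref{lema808}. Your explanation of the binomial collapse that identifies $(R_x^lF)_\alpha$ with the complex remainder is in fact more explicit than the paper's, which simply writes out the rearranged inequality without naming the identity.
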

\begin{proof}
We consider the set of functions $(\phi^{(k_{1},k_{2})})_{(k_{1},k_{2})\in\N^2}$ defined by 
\begin{equation}\label{aux6}
\phi^{(k_{1},k_{2})}:=i^{k_{2}}\partial_{\epsilon}^{k_{1}+k_{2}}f(\epsilon),\quad (k_{1},k_{2})\in\N^2,(\epsilon_{1},\epsilon_{2})\in K'.
\end{equation}
From Lemma~\ref{lema808}, function $f$ satisfies bounds as in (\ref{e810}). Written in terms of the elements in $(\phi^{(k_{1},k_{2})})_{(k_{1},k_{2})\in\N^2}$ we have the existence of $C_{1},H>0$ such that for every $(k_{1},k_{2})\in\N^2$, $m\ge 0$
\begin{align*}
&\left\|\frac{1}{i^{k_{2}}}\phi^{(k_{1},k_{2})}(x_{1},y_{1})-\sum_{p=0}^{m-|(k_{1},k_{2})|}\sum_{h_{1}+h_{2}=p}\frac{\phi^{(k_{1}+h_{1},k_{2}+h_{2})}(x_{2},y_{2})}{i^{k_{2}+h_{2}}p!}\right.\\
&\left. \times\frac{p!}{h_{1}!h_{2}!}(x_{1}-x_{2})^{h_{1}}i^{h_{2}}(y_{1}-y_{2})^{h_{2}}\right\|_{\mathbb{H}}\le C_{1}H^{m}|q|^{A\frac{m^2}{2}}\frac{\left\|(x_{1}-x_{2},y_{1}-y_{2})\right\|_{\R^2}^{m+1-|(k_{1},k_{2})|}}{(m+1-|(k_{1},k_{2})|)!}
\end{align*}
for $(x_{1},y_{1}),(x_{2},y_{2})\in K'$. Expression (\ref{e820}) can be directly checked from (\ref{aux6}) and (\ref{e810}) for $\epsilon_{b}=0$ and $m=k$. This yields the set $(\phi^{(k_{1},k_{2})})_{(k_{1},k_{2})\in\N^2}$ is an element in $\mathcal{E}_{\{w_{0}\}}(K')$
\end{proof}

Next result allows us to glue together a finite number of jets in $\mathcal{E}_{\{w_{0}\}}(K)$, for a given compact set $K$.
\begin{theo}\label{lemakantor}[\cite{kantor}. Theorem II.1.3]
Let $K_{1},K_{2}$ be compact sets in $\R^2$. The following statements are equivalent:
\begin{enumerate}
\item[i.] The sequence
$$0\longrightarrow\mathcal{E}_{\{w_{0}\}}(K_{1})\stackrel{\pi}{\longrightarrow}\mathcal{E}_{\{w_{0}\}}(K_{1})\oplus\mathcal{E}_{\{w_{0}\} }(K_{2})\stackrel{\delta}{\longrightarrow}\mathcal{E}_{\{w_{0}\}}(K_{1}\cap K_{2})\longrightarrow 0$$
is exact. $\pi(f)=(f|_{K_{1}},f|_{K_{2}})$ and $\delta(f,g)=f|_{K_{1}\cap K_{2}}-g|_{K_{1}\cap K_{2}}$.
\item[ii.]  Let $f_{1}\in\mathcal{E}_{\{w_{0}\}}(K_{1})$ and $f_{2}\in\mathcal{E}_{\{w_{0}\}}(K_{2})$ be such that $f_{1}(x)=f_{2}(x)$ for every $x\in K_{1}\cap K_{2}$. The function $f$ defined by $f(x)=f_{1}(x)$ if $x\in K_{1}$ and $f(x)=f_{2}(x)$ if $x\in K_{2}$ belongs to $\mathcal{E}_{\{w_{0}\}}(K_{1}\cup K_{2})$.
\item[iii.] If $K_{1}\cap K_{2}\neq\emptyset$ then there exist $A_{3},A_{4}>0$ such that
$$\overline{M}(A_{3}\hbox{dist}(x,K_{1}\cap K_{2})\le A_{4}\overline{M}(\hbox{dist}(x,K_{2})),$$
for every $x\in K_{1}$. Here, $\overline{M}$ denotes the function given by $\overline{M}(0)=0$ and $\overline{M}(t)=\inf_{n\in\N}t^nM_{n}$ for $t>0$. 
$\hbox{dist}(x,K)$ stands for the distance from $x$ to set $K$.

\end{enumerate}
\end{theo}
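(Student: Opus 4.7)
The plan is to attack the two nontrivial equivalences (i)$\Leftrightarrow$(ii) and (ii)$\Leftrightarrow$(iii) in sequence, since (i) is essentially a cohomological reformulation of (ii), while (iii) encodes the underlying geometric content.

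First I would establish (i)$\Leftrightarrow$(ii). The map $\pi$ is manifestly injective by the definition of restriction of jets (a jet on $K_{1}\cup K_{2}$ whose restrictions to $K_{1}$ and $K_{2}$ vanish is identically zero). Exactness at the middle term says $\ker(\delta)=\operatorname{im}(\pi)$; unravelling this is precisely the content of (ii): a pair $(f_{1},f_{2})$ agrees on $K_{1}\cap K_{2}$ if and only if there exists a single jet in $\mathcal{E}_{\{w_{0}\}}(K_{1}\cup K_{2})$ whose restrictions yield $f_{1}$ and $f_{2}$. Surjectivity of $\delta$ (exactness at the right term) amounts to showing that any $h\in\mathcal{E}_{\{w_{0}\}}(K_{1}\cap K_{2})$ can be written as $f_{1}|_{K_{1}\cap K_{2}}-f_{2}|_{K_{1}\cap K_{2}}$. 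For this one can extend $h$ via Theorem~\ref{teo792} (applied to a suitable neighborhood) to a $w_{0}$-$\mathcal{C}^{\infty}$ function on $\R^{2}$ and set $f_{1}=h$, $f_{2}=0$; alternatively this surjectivity is already implicit in (ii) together with standard extension. Thus (i) and (ii) are tautologously equivalent modulo the single-compact extension theorem.

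Next I would establish (ii)$\Leftrightarrow$(iii). The implication (iii)$\Rightarrow$(ii) is the substantive part. Given $f_{1}\in\mathcal{E}_{\{w_{0}\}}(K_{1})$ and $f_{2}\in\mathcal{E}_{\{w_{0}\}}(K_{2})$ coinciding on $K_{1}\cap K_{2}$, the strategy is to construct an interpolating jet by a weighted Whitney-style partition of unity. One performs the Whitney decomposition of $\R^{2}\setminus(K_{1}\cap K_{2})$ into dyadic cubes $Q$ and assigns to each cube a Taylor polynomial of $f_{1}$ or $f_{2}$ according to whether $Q$ is closer to $K_{1}$ or to $K_{2}$. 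The geometric comparison (iii) enters at each step of controlling the derivatives of the glueing function: the ratio between $\operatorname{dist}(x,K_{1}\cap K_{2})$ and $\operatorname{dist}(x,K_{2})$ must be dominated by the associated function $\overline{M}$ so that the $w_{0}$-weighted norms of the cut-off functions and of the Taylor remainders remain summable. The converse (ii)$\Rightarrow$(iii) is proved by contradiction: if the geometric condition fails, one produces a sequence of points $x_{n}\in K_{1}$ and explicit jets concentrated near $x_{n}$ which agree trivially on $K_{1}\cap K_{2}$ but whose difference cannot belong to any $\mathcal{E}_{\{w_{0}\}}$-jet on $K_{1}\cup K_{2}$, thereby violating (ii).

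The main obstacle is the forward implication (iii)$\Rightarrow$(ii): the combinatorics of the Whitney decomposition must be carried out with derivative estimates compatible with the weight $w_{0}(t)=k_{1}\log^{2}(t)$, and the $q$-Gevrey type of the resulting glued jet will typically be larger than the input types (this is the ``type inflation'' already flagged in the introduction). In particular, one must verify that the $\overline{M}$-comparison of (iii) is strong enough to absorb the factorial-type losses coming from differentiating Whitney cut-off functions a variable number of times, and that the weight $w_{0}$ — being strong in the sense of Theorem~\ref{teo827} — makes this analysis go through uniformly on $K_{1}$.
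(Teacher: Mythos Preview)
The paper does not prove this theorem at all: it is stated as a citation of Theorem II.1.3 from Kantor~\cite{kantor}, and immediately followed by Corollary~\ref{coro910} without any proof environment. So there is nothing in the paper to compare your proposal against.

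That said, your outline is broadly consistent with how results of this type are established in the ultradifferentiable setting (and with Kantor's original argument in the quasianalytic framework). The equivalence (i)$\Leftrightarrow$(ii) is indeed essentially formal once one has a single-compact extension theorem such as Theorem~\ref{teo792}. For (iii)$\Rightarrow$(ii), the Whitney-decomposition-plus-cutoff scheme you describe is the standard approach; the role of the transversality condition (iii) is precisely to guarantee that the truncated Taylor expansions from the two sides can be patched with controlled remainders in the $w_{0}$-scale. Your sketch of (ii)$\Rightarrow$(iii) by contradiction, constructing badly-placed test jets, is also the standard route.

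One point where your write-up is imprecise: in the statement of (ii), the condition $f_{1}(x)=f_{2}(x)$ on $K_{1}\cap K_{2}$ must be understood as equality of the full jets $(f_{1}^{\alpha})$ and $(f_{2}^{\alpha})$ for every multi-index $\alpha$, not merely of the zeroth component. Your discussion of exactness at the middle term implicitly assumes this, but it should be said explicitly, since otherwise the glued object $f$ need not even be a jet in $\mathcal{E}_{\{w_{0}\}}(K_{1}\cup K_{2})$.
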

\begin{corol}\label{coro910}[\cite{ramissauloyzhang}, Lemma 4.3.6]
Given $\tilde{K}_{1},\tilde{K}_{2}$ nonempty compact sets in $\C^{\star}$, if we put $K_{i}:=\{(\epsilon_{1},\epsilon_{2})\in\R^{2}:\epsilon_{1}+i\epsilon_{2}\in\tilde{K}_{i}q^{-\N}\cup\{0\}\}$, $i=1,2$, then the previous theorem holds for $K_{1}$ and $K_{2}$.
\end{corol}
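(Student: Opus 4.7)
The plan is to apply Theorem~\ref{lemakantor} by verifying its condition (iii) for the compacts $K_{1},K_{2}$. Since $0\in K_{1}\cap K_{2}$ by construction, the hypothesis $K_{1}\cap K_{2}\neq\emptyset$ is fulfilled. For the weight $w_{0}$ the associated defining sequence is $M_{n}=|q|^{An^{2}/2}$ (up to multiplicative constants), and minimising $n\log t+\tfrac{An^{2}}{2}\log|q|$ over $n\ge 0$ yields
$$\overline{M}(t)\;\asymp\;\exp\!\Bigl(-\tfrac{\log^{2}t}{2A\log|q|}\Bigr)\qquad\text{as } t\to 0^{+},$$
which is strictly increasing on some interval $(0,t_{0})$.

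The core of the argument is a linear comparison of distances, namely to exhibit a constant $C>0$ such that
$$\hbox{dist}(x,K_{1}\cap K_{2})\;\le\; C\,\hbox{dist}(x,K_{2})\qquad\text{for every }x\in K_{1}.$$
Granted this, the monotonicity of $\overline{M}$ delivers condition (iii) with $A_{3}=1/C$ and $A_{4}=1$. For $x=0$ the inequality is trivial. For $x=\epsilon_{1}q^{-n_{1}}\in \tilde{K}_{1}q^{-\N}$, let $y\in K_{2}$ realise $\hbox{dist}(x,K_{2})$. If $y=0$, then $\hbox{dist}(x,K_{2})=|x|\ge \hbox{dist}(x,K_{1}\cap K_{2})$ because $0\in K_{1}\cap K_{2}$. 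If $y=\epsilon_{2}q^{-n_{2}}\in\tilde{K}_{2}q^{-\N}$, one has $|x-y|\le|x|$, and combining compactness of $\tilde{K}_{1},\tilde{K}_{2}$ in $\C^{\star}$ (so that their moduli are bounded above and bounded away from $0$) with $|q|>1$ forces the index difference $|n_{1}-n_{2}|$ to be bounded by a fixed integer $k_{0}=k_{0}(\tilde{K}_{1},\tilde{K}_{2},q)$; within this bounded range a finite case analysis comparing $x$ to points $\epsilon_{0}q^{-n_{1}+k}\in K_{1}\cap K_{2}$ (with $\epsilon_{0}\in \tilde{K}_{1}\cap q^{k}\tilde{K}_{2}$ for $|k|\le k_{0}$) produces the desired $C$ by compactness.

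The main obstacle is precisely this second sub-case: one must carefully exploit the discrete geometric structure of $q$-spirals to prevent $\tilde{K}_{1}q^{-\N}$ and $\tilde{K}_{2}q^{-\N}$ from approaching each other on a scale strictly smaller than $|x|$ without also producing a point of $K_{1}\cap K_{2}$ at a comparable distance; the same kind of argument is carried out in~\cite{ramissauloyzhang}, Lemma~4.3.6. Once the distance comparison is established, condition (iii) of Theorem~\ref{lemakantor} holds, its three equivalent statements apply, and Corollary~\ref{coro910} follows.
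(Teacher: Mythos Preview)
The paper does not give a proof of this corollary: it is simply cited from \cite{ramissauloyzhang}, Lemma~4.3.6, and the subsequent remark only records that condition~(iii) of Theorem~\ref{lemakantor} is the transversality condition. Your sketch therefore already provides more detail than the paper, and the overall strategy---verify condition~(iii) by a linear distance comparison $\hbox{dist}(x,K_{1}\cap K_{2})\le C\,\hbox{dist}(x,K_{2})$, exploiting the self-similar scaling of $q$-spirals---is exactly the route taken in the cited reference.

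There is, however, a genuine gap in your ``finite case analysis''. You propose to compare $x=\epsilon_{1}q^{-n_{1}}$ to a point $\epsilon_{0}q^{-n_{1}+k}\in K_{1}\cap K_{2}$ with $\epsilon_{0}\in\tilde{K}_{1}\cap q^{k}\tilde{K}_{2}$, but nothing guarantees that $\tilde{K}_{1}\cap q^{k}\tilde{K}_{2}\neq\emptyset$ for any $|k|\le k_{0}$; indeed one may have $K_{1}\cap K_{2}=\{0\}$ even when $\tilde{K}_{1}q^{-\N}$ and $\tilde{K}_{2}q^{-\N}$ come arbitrarily close. The correct dichotomy is this: if those intersections are all empty, compactness gives a uniform lower bound $\delta>0$ for the distance from $\tilde{K}_{1}$ to $\bigcup_{|k|\le k_{0}}q^{k}\tilde{K}_{2}$, whence $\hbox{dist}(x,K_{2})\ge c\,|q|^{-n_{1}}$ while $\hbox{dist}(x,K_{1}\cap K_{2})\le |x|\le C'|q|^{-n_{1}}$, and the comparison follows. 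If some intersection is nonempty, your argument applies. You should make this case split explicit; otherwise the proof does not close.
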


As the authors remark in~\cite{ramissauloyzhang}, condition $iii)$ in the previous result is known as transversality condition which is more constricting than {\L}ojasiewicz's condition (see~\cite{malgrange1}).

Next proposition is devoted to show that the cocycle constructed in Proposition~\ref{prop754} splits in the space of $w_{0}-\mathcal{C}^{\infty}$ functions in the sense of Whitney. Whitney-type extension results on $\mathcal{E}_{\{w_{0}\}}(K)$ (Theorem~\ref{teo792} and Theorem~\ref{teo827}) will play an important role in the following step.
\begin{prop}\label{prop896}
Let $(U_{I}q^{-\N})_{I\in\mathcal{I}}$ be a good covering and let $(g_{I,I'}(\epsilon))_{(I,I')\in\mathcal{I}^{2}}$ be the $q-$Gevrey $\mathbb{H}_{\mathcal{T},\rho}$-cocycle of type $\tilde{A}$ constructed in Proposition~\ref{prop754}. We choose a family of compact sets $K_{I}\subseteq U_{I}$ for $I\in\mathcal{I}$, with $\hbox{Int}(K_{I})\neq\emptyset$, in such a way that $\cup_{I\in\mathcal{I}}(K_{I}q^{-\N})$ is $\mathcal{U}\setminus\{0\}$, where $\mathcal{U}$ is a neighborhood of 0 in $\C$.

Then, for all $I\in\mathcal{I}$, there exists a $w_{0}-\mathcal{C}^{\infty}$ function $f_{I}(\epsilon_{1},\epsilon_{2})$ in the sense of Whitney on the compact set $A_{I}=\{(\epsilon_1,\epsilon_2)\in\R^{2}:\epsilon_{1}+i\epsilon_{2}\in K_{I}q^{-\N}\cup\{0\}\}$, with values in the Banach space $\mathbb{H}_{\mathcal{T},\rho}$, such that 
\begin{equation}\label{e888}
g_{I,I'}(\epsilon_{1}+i\epsilon_{2})=f_{I'}(\epsilon_{1},\epsilon_{2})-f_{I}(\epsilon_{1},\epsilon_{2})
\end{equation}
for all $I,I'\in\mathcal{I}$ such that $A_{I}\cap A_{I'}\neq\emptyset$ and, for every $(\epsilon_{1},\epsilon_{2})\in A_{I}\cap A_{I'}$.
\end{prop}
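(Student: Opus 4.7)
The plan is to combine the $q$-Gevrey flatness of the cocycle with a Whitney-type extension theorem in the Roumieu ultradifferentiable class $\mathcal{E}_{\{w_{0}\}}$, and then split the cocycle by the classical Cousin trick using a partition of unity. Specifically, by Proposition~\ref{prop754} each $g_{I,I'}$ admits $\hat{0}$ as its $q$-Gevrey expansion of type $\tilde{A}$ on $U_{I}q^{-\N}\cap U_{I'}q^{-\N}$, so Lemma~\ref{lema879} applied with $\hat{f}=\hat{0}$ shows that the jet
$$ \phi_{I,I'}^{(k_{1},k_{2})}(\epsilon_{1},\epsilon_{2})=i^{k_{2}}\partial_{\epsilon}^{k_{1}+k_{2}}g_{I,I'}(\epsilon_{1}+i\epsilon_{2}),\quad (k_{1},k_{2})\in\N^{2},$$
(extended by $0$ at the origin) belongs to $\mathcal{E}_{\{w_{0}\}}(A_{I}\cap A_{I'})$ with values in $\mathbb{H}_{\mathcal{T},\rho}$.

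The second step is to apply Theorem~\ref{teo792}, which is available thanks to Theorem~\ref{teo827}: since $w_{0}$ is a strong weight function, the Whitney jet $(\phi_{I,I'}^{(k_{1},k_{2})})_{(k_{1},k_{2})}$ extends to an actual function $\tilde{g}_{I,I'}\in\mathcal{E}_{\{w_{0}\}}(\R^{2})$ whose restriction (together with all its partial derivatives) to $A_{I}\cap A_{I'}$ recovers the original jet, in particular $\tilde{g}_{I,I'}(\epsilon_{1},\epsilon_{2})=g_{I,I'}(\epsilon_{1}+i\epsilon_{2})$ on $A_{I}\cap A_{I'}$.

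The third step is to build the splitting. Note that $w_{0}(t)=k_{1}\log^{2}(t)$ is non-quasianalytic (property $(\beta)$ of the weight follows from $\int_{1}^{\infty}\log^{2}(t)/(1+t^{2})\,dt<\infty$), so one can construct open neighborhoods $\Omega_{I}\supseteq A_{I}$ whose nerve matches that of the family $(A_{I})_{I\in\mathcal{I}}$ (i.e.\ triple intersections $\Omega_{I}\cap\Omega_{I'}\cap\Omega_{I''}$ are contained in a small thickening of $A_{I}\cap A_{I'}\cap A_{I''}$), together with a partition of unity $(\psi_{I})_{I\in\mathcal{I}}$ in $\mathcal{E}_{\{w_{0}\}}(\R^{2})$ subordinate to $(\Omega_{I})_{I\in\mathcal{I}}$ with $\sum_{I}\psi_{I}\equiv 1$ on a neighborhood of $\bigcup_{I}A_{I}$. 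Setting $\tilde{g}_{I,I}\equiv 0$, I would then define, for every $I\in\mathcal{I}$,
$$ f_{I}(\epsilon_{1},\epsilon_{2}):=\sum_{I''\in\mathcal{I}}\psi_{I''}(\epsilon_{1},\epsilon_{2})\,\tilde{g}_{I'',I}(\epsilon_{1},\epsilon_{2}),$$
which is a finite sum of products of $w_{0}$-smooth functions, hence lies in $\mathcal{E}_{\{w_{0}\}}(\R^{2})$ and in particular is $w_{0}$-$\mathcal{C}^{\infty}$ in the Whitney sense on $A_{I}$.

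It remains to check (\ref{e888}). For $(\epsilon_{1},\epsilon_{2})\in A_{I}\cap A_{I'}$ with $\psi_{I''}(\epsilon_{1},\epsilon_{2})\neq 0$, the support condition forces $(\epsilon_{1},\epsilon_{2})$ into $A_{I}\cap A_{I'}\cap A_{I''}$, and there the cocycle relation from Definition~\ref{def745} yields
$$ (\tilde{g}_{I'',I'}-\tilde{g}_{I'',I})(\epsilon_{1},\epsilon_{2})=(g_{I'',I'}-g_{I'',I})(\epsilon_{1}+i\epsilon_{2})=g_{I,I'}(\epsilon_{1}+i\epsilon_{2}).$$
Summing over $I''$ and using $\sum_{I''}\psi_{I''}\equiv 1$ near the point, one obtains $f_{I'}-f_{I}=g_{I,I'}$ on $A_{I}\cap A_{I'}$. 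The main technical obstacle is the construction in the third step: arranging the thickened open sets $\Omega_{I}$ so that they preserve the combinatorics of the good covering (so that $\psi_{I''}$ sees only triple intersections where the cocycle relation actually holds), while still admitting a partition of unity of class $\mathcal{E}_{\{w_{0}\}}$. Regularity of $f_{I}$ and the bookkeeping for the splitting identity then follow from routine verification.
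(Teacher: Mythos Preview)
Your approach is genuinely different from the paper's, and the obstacle you flag in the third step is a real gap rather than a routine verification.

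The paper does not use a partition of unity at all. It proceeds by an inductive extension argument: one fixes an index $I$, chooses any $w_{0}$--$\mathcal{C}^{\infty}$ Whitney function $f_{I}$ on $A_{I}$, and then constructs the remaining $f_{I'}$ one at a time. If $A_{I}\cap A_{I'}\neq\emptyset$ and no third index intervenes, one sets $e_{I,I'}:=f_{I}+g_{I,I'}$ on $A_{I}\cap A_{I'}$ and extends it to all of $A_{I'}$ via the Whitney extension theorem (Theorem~\ref{teo792}, available because $w_{0}$ is strong by Theorem~\ref{teo827}); this extension is $f_{I'}$. When a triple intersection $A_{I}\cap A_{I'}\cap A_{I''}$ is nonempty, the cocycle relation makes the two prescriptions $f_{I}+g_{I,I''}$ and $f_{I'}+g_{I',I''}$ agree on it, so Kantor's gluing theorem (Theorem~\ref{lemakantor} and Corollary~\ref{coro910}) patches them into a single jet on $(A_{I}\cup A_{I'})\cap A_{I''}$, which is then extended to $A_{I''}$. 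The ``four by four disjoint'' clause in Definition~\ref{def527} guarantees that no deeper overlaps occur, so this case analysis terminates.

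Your partition-of-unity route, by contrast, requires that whenever $(\epsilon_{1},\epsilon_{2})\in A_{I}\cap A_{I'}$ satisfies $\psi_{I''}(\epsilon_{1},\epsilon_{2})\neq 0$ with $I''\notin\{I,I'\}$, the point already lies in $A_{I''}$, so that the extensions $\tilde{g}_{I'',I}$ and $\tilde{g}_{I'',I'}$ coincide there with the original cocycle data. Equivalently, the open thickening $\Omega_{I''}\supseteq A_{I''}$ must miss every point of $(A_{I}\cap A_{I'})\setminus A_{I''}$. But every $A_{J}$ contains the origin, and a double intersection $A_{I}\cap A_{I'}$ will in general contain points arbitrarily close to $0$ that are \emph{not} in $A_{I''}$ (the $K_{J}q^{-\N}$ are thin $q$-spirals, not full neighborhoods of $0$). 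Such points accumulate at $0\in A_{I''}$, so no open neighborhood $\Omega_{I''}$ of $A_{I''}$ can avoid them. Hence the nerve-preserving family $(\Omega_{I})_{I}$ you postulate does not exist in this geometry, and the identity $f_{I'}-f_{I}=g_{I,I'}$ cannot be verified from your formula. The paper's inductive construction is tailored to sidestep exactly this difficulty.
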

\begin{proof}
The proof follows similar arguments as Lemma 3.12 in~\cite{ramissauloyzhang} and it is an adaptation of Proposition 5 in~\cite{malek} under $q-$Gevrey settings.

Let $I,I'\in\mathcal{I}$ such that $A_{I}\cap A_{I'}\neq\emptyset$. From Lemma~\ref{lema879}, we have the function $(\epsilon_{1},\epsilon_{2})\mapsto g_{I,I'}(\epsilon_{1}+i\epsilon_{2})$  is a $w_{0}-\mathcal{C}^{\infty}$ function in the sense of Whitney on $A_{I}\cap A_{I'}$. In the following we provide the construction of $f_{I}$ for $I\in\mathcal{I}$ verifying (\ref{e888}).

Let us fix any $I\in\mathcal{I}$. We consider any $w_{0}-\mathcal{C}^{\infty}$ function in the sense of Whitney on $A_{I}$. By definition of the good covering $(U_{I}q^{-\N})_{I\in\mathcal{I}}$ we are under the following cases:

Case 1: If there is at least one $I'\in\mathcal{I}$, $I\neq I'$, such that $A_{I}\cap A_{I'}\neq\emptyset$ but $A_{I}\cap A_{I'}\cap A_{I''}=\emptyset$ for every $I''\in\mathcal{I}$ with $I''\neq I'\neq I$, then we define $e_{I,I'}(\epsilon_{1},\epsilon_{2})=f_{I}(\epsilon_{1},\epsilon_{2})+g_{I,I'}(\epsilon_{1}+i\epsilon_{2})$ for every $(\epsilon_{1},\epsilon_{2})\in A_{I}\cap A_{I'}$.  $e_{I,I'}$ is a $w_{0}-\mathcal{C}^{\infty}$ function in the sense of Whitney in $A_{I}\cap A_{I'}$. From Theorem~\ref{teo792} and Theorem~\ref{teo827}, we can extend $e_{I,I'}$ to a $w_{0}-\mathcal{C}^{\infty}$ function in the sense of Whitney on $A_{I'}$. This extension is called $f_{I'}$. We have
$$g_{I,I'}(\epsilon_{1}+i\epsilon_{2})=f_{I'}(\epsilon_{1},\epsilon_{2})-f_{I}(\epsilon_{1},\epsilon_{2}), \quad (\epsilon_{1},\epsilon_{2})\in A_{I}\cap A_{I'}.$$

Case 2: There exist two different $I',I''\in\mathcal{I}$ with $I'\neq I\neq I''$ such that $A_{I}\cap A_{I'}\cap A_{I''}\neq\emptyset$. We first construct a $w_{0}-\mathcal{C}^{\infty}$ function in the sense of Whitney on $A_{I'}$, $f_{I'}(\epsilon_{1},\epsilon_{2})$, verifying
\begin{equation}\label{e912}
g_{I,I'}(\epsilon_{1}+i\epsilon_{2})=f_{I'}(\epsilon_{1},\epsilon_{2})-f_{I}(\epsilon_{1},\epsilon_{2}),\quad (\epsilon_{1},\epsilon_{2})\in A_{I}\cap A_{I'}.
\end{equation}
We define $e_{I,I''}(\epsilon_{1},\epsilon_{2})=f_{I}(\epsilon_{1},\epsilon_{2})+g_{I,I''}(\epsilon_{1}+i\epsilon_{2})$ for every $(\epsilon_{1},\epsilon_{2})\in A_{I}\cap A_{I''}$ and $e_{I',I''}(\epsilon_{1},\epsilon_{2})=f_{I'}(\epsilon_{1},\epsilon_{2})+g_{I',I''}(\epsilon_{1}+i\epsilon_{2})$ whenever $(\epsilon_{1},\epsilon_{2})\in A_{I'}\cap A_{I''}$. From (\ref{e912}) we have $e_{I,I''}(\epsilon_{1},\epsilon_{2})=e_{I',I''}(\epsilon_{1},\epsilon_{2})$ for every $(\epsilon_{1},\epsilon_{2})\in A_{I}\cap A_{I'}\cap A_{I''}$. From this, we can define  
$$
e_{I''}(\epsilon_{1},\epsilon_{2}):= \left\{ \begin{array}{cc}
             e_{I,I''}(\epsilon_{1},\epsilon_{2}) &   \hbox{ if }  (\epsilon_{1},\epsilon_{2})\in A_{I}\cap A_{I''} \\
             e_{I',I''}(\epsilon_{1},\epsilon_{2}) &   \hbox{ if }  (\epsilon_{1},\epsilon_{2})\in A_{I'}\cap A_{I''}.
             \end{array}
   \right.
$$
From Theorem~\ref{lemakantor} and Corollary~\ref{coro910} we deduce $e_{I''}(\epsilon_{1},\epsilon_{2})$ can be extended to a $w_{0}-\mathcal{C}^{\infty}$ function in the sense of Whitney in $A_{I''}$, $f_{I''}(\epsilon_{1},\epsilon_{2})$. It is straightforward to check, from the way $f_{I''}$ was constructed, that $f_{I''}(\epsilon_{1},\epsilon_{2})=f_{I}(\epsilon_{1},\epsilon_{2})+g_{I,I''}(\epsilon_{1}+i\epsilon_{2})$ when $(\epsilon_{1},\epsilon_{2})\in A_{I}\cap A_{I''}$ and also $f_{I''}(\epsilon_{1},\epsilon_{2})=f_{I'}(\epsilon_{1},\epsilon_{2})+g_{I',I''}(\epsilon_{1}+i\epsilon_{2})$ for $(\epsilon_{1},\epsilon_{2})\in A_{I'}\cap A_{I''}$.

These two cases solve completely the problem due to nonempty intersection of four different compacts in $(A_{I})_{I\in\mathcal{I}}$ is not allowed when working with a good covering. The functions in $(f_{I})_{I\in\mathcal{I}}$ satisfy (\ref{e888}).
\end{proof}
\end{section}

\begin{section}{Existence of formal series solutions and $q-$Gevrey expansions}

In the current section we set the main result in this work. We establish the existence of a formal power series with coefficients belonging to $\mathbb{H}_{\mathcal{T},\rho}$ which asymptotically represents the actual solutions found in Theorem~\ref{teo572} for the problem (\ref{ee1})+(\ref{ii1}). Moreover, each actual solution turns out to admit this formal power series as $q-$Gevrey expansion of a certain type in the $q-$spiral where the solution is defined.

The following lemma will be useful in the following. We only sketch its proof. For more details we refer to~\cite{narasimhan}.
\begin{lemma}\label{lemafin}
Let $U$ be an open and bounded set in $\R^2$. We consider $h\in\mathcal{C}^{\infty}(U)$ (in the classical sense) verifying bounds as in (\ref{e820}) and (\ref{e824}) for every $(\epsilon_{1},\epsilon_2)\in U$. Let $g$ be the solution of the equation 
\begin{equation}\label{probdelta}
\partial_{\overline{\epsilon}}g(\epsilon_{1},\epsilon_{2}):=\frac{1}{2}(\partial_{\epsilon_{1}}+i\partial_{\epsilon_{2}})g(\epsilon_{1}+i\epsilon_{2})=h(\epsilon_{1},\epsilon_{2}),\quad (\epsilon_{1},\epsilon_{2})\in U.
\end{equation}
Then $g$ also verifies bounds in the nature of (\ref{e820}) and (\ref{e824}) for $(\epsilon_{1},\epsilon_{2})\in U$.
\end{lemma}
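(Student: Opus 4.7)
The plan is to fix a compact set $K\subset U$ and a slightly larger open set $U'$ with $K\subset U'\Subset U$; since the required bounds are pointwise, producing them on every such $K$ gives the statement on $U$. The starting point is the Cauchy--Pompeiu representation
\begin{equation*}
g(\epsilon)=\frac{1}{2\pi i}\int_{\partial U'}\frac{g(\zeta)}{\zeta-\epsilon}\,d\zeta-\frac{1}{\pi}\iint_{U'}\frac{h(\zeta)}{\zeta-\epsilon}\,dA(\zeta),\qquad \epsilon\in U'.
\end{equation*}
The boundary integral is holomorphic in $\epsilon\in U'$, so Cauchy's estimates expressed through $\mathrm{dist}(\epsilon,\partial U')$ yield bounds on all its real derivatives of the form $C_K\,\a!\,H_K^{|\a|}$ on $K$, which is stronger than the required $q$-Gevrey bound. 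All the difficulty is therefore concentrated in the area integral.

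To treat that term I would cut off $h$. Because the weight $w_{0}$ satisfies condition $(\b)$ from the preceding section, the class $\mathcal{E}_{\{w_{0}\}}(\R^2)$ admits compactly supported elements, so one can pick $\chi\in\mathcal{E}_{\{w_{0}\}}(\R^2)$ with $\chi\equiv 1$ on a neighbourhood of $K$ and $\mathrm{supp}\,\chi\subset U'$. Setting $\tilde h:=\chi h$ and
\begin{equation*}
\tilde g(\epsilon):=-\frac{1}{\pi}\iint_{\R^2}\frac{\tilde h(\zeta)}{\zeta-\epsilon}\,dA(\zeta),
\end{equation*}
$\tilde g$ lies in $\mathcal{C}^\infty(\R^2)$ and solves $\partial_{\bar\epsilon}\tilde g=\tilde h$, so $g-\tilde g$ is holomorphic on $\mathrm{Int}(K)$ and its derivatives obey Cauchy bounds. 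Exploiting the translation invariance of the kernel in $\zeta-\epsilon$ and the local integrability of $|\zeta-\epsilon|^{-1}$ on $\R^2$, an integration-by-parts argument (justified by mollification since $\tilde h$ is compactly supported) yields
\begin{equation*}
\partial^{\a}_\epsilon\tilde g(\epsilon)=-\frac{1}{\pi}\iint\frac{\partial^{\a}_\zeta\tilde h(\zeta)}{\zeta-\epsilon}\,dA(\zeta).
\end{equation*}
A Leibniz expansion of $\partial^{\a}(\chi h)$ combined with $(\ref{e820})$ for both $\chi$ and $h$, together with the elementary inequality $|q|^{A|\b|^2/2}|q|^{A|\a-\b|^2/2}\le|q|^{A|\a|^2/2}$ for $\b\le\a$, gives $\|\partial^{\a}\tilde h\|_\infty\le \tilde C\,\tilde H^{|\a|}|q|^{A|\a|^2/2}$; combined with the uniform bound $\iint_{\mathrm{supp}\,\tilde h}|\zeta-\epsilon|^{-1}dA\le C_{U'}$ on $K$, this produces the pointwise estimate $(\ref{e820})$ for $\tilde g$ and hence for $g$ on $K$.

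The Whitney remainder estimate $(\ref{e824})$ would then follow from $(\ref{e820})$ by Taylor's theorem with integral remainder applied to each $\partial^{\a}g$: the quantity $(R^l_x g)_\a(y)$ is controlled by an integral of derivatives of $g$ of total order $l+1$, whose bound $C_1 H^{l+1}|q|^{A(l+1)^2/2}$ absorbs into $H_1^{l}|q|^{Al^2/2}$ after the identity $|q|^{A(l+1)^2/2}=|q|^{Al^2/2}\cdot|q|^{A(2l+1)/2}$, so that the $q$-Gevrey type $A$ is preserved at the expense of enlarging the geometric constant. The main obstacle I anticipate is the integration-by-parts step: although $|\zeta-\epsilon|^{-1}$ is locally integrable, iterated migration of the derivatives across the singular kernel onto $\tilde h$ must be justified carefully (by mollification or distribution-theoretic arguments), and the Leibniz expansion of $\chi h$ has to be controlled so that the $2^{|\a|}$ summands do not interact destructively with the quadratic-exponential factor $|q|^{A|\a|^2/2}$---routine but delicate verifications, whose details are left to \cite{narasimhan}.
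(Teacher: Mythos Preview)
Your proposal is correct and follows essentially the same route as the paper. Both arguments extend $h$ (the paper by asserting a compactly supported extension $h_1$ preserving the bounds, you by an explicit cutoff $\chi\in\mathcal{E}_{\{w_0\}}$), form the Cauchy transform $-\frac{1}{\pi}\iint\frac{h_1}{\zeta-\epsilon}\,dA$, pass $\partial^{\a}_\epsilon$ onto the integrand via the translation trick and local integrability of $|\zeta-\epsilon|^{-1}$, and then obtain the Whitney remainder estimate from Taylor's formula with integral remainder; your additional Cauchy--Pompeiu preamble and the explicit treatment of the holomorphic difference $g-\tilde g$ are refinements rather than a different strategy.
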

\begin{proof}
Let $h_1$ be any extension of the function $h$ to $\R^2$ with compact support which preserves bounds in (\ref{e820}) and (\ref{e824}) in $\R^2$. We have 
$$g(\epsilon_{1},\epsilon_{2}):=-\frac{1}{\pi}\int_{\R^2}\frac{h_{1}(x)}{x-\epsilon}d\xi d\eta,\quad (\epsilon_{1},\epsilon_{2})\in U$$
solves (\ref{probdelta}). Here, $\epsilon=(\epsilon_{1},\epsilon_{2})$, $x=(\xi,\eta)$ and $d\xi d\eta$ stands for Lebesgue measure in $x-$plane.
Bounds in (\ref{e820}) for the function $g$ come out from
$$\frac{\partial^{\a_{1}+\a_{2}}g}{\partial\epsilon_{1}^{\a_1}\partial\epsilon_{2}^{\a_{2}}}(\epsilon_{1},\epsilon_{2})=-\frac{1}{\pi}\int_{\R^2}\frac{\partial^{\a_{1}+\a_{2}}h_{1}}{\partial\epsilon_{1}^{\a_1}\partial\epsilon_{2}^{\a_{2}}}(x)\frac{1}{x-\epsilon}d\xi d\eta,$$
for every $\alpha=(\alpha_{1},\alpha_{2})\in\N^2$ and $(\epsilon_{1},\epsilon_{2})\in U$, and from the fact that the function $x=(x_{1},x_{2})\mapsto 1/|x|$ is Lebesgue integrable in any compact set containing 0.

On the other hand, $g$ satisfies estimates in (\ref{e824}) from Taylor formula with integral remainder.
\end{proof}

We now give a decomposition result of the functions $X_{I}$ constructed in Theorem~\ref{teo572}. The procedure is adapted from~\cite{malek} under $q-$Gevrey settings.
For every $I\in\mathcal{I}$, we denote $X_{I}(\epsilon):U_{I}q^{-\N}\to\mathbb{H}_{\mathcal{T},\rho}$ the holomorphic function given by $X_{I}(\epsilon):=(t,z)\mapsto X_{I}(\epsilon,t,z)$. 
\begin{prop}\label{prop945}
There exists a $w_{0}-\mathcal{C}^{\infty}$ function $u(\epsilon_{1},\epsilon_{2})$ and a holomorphic function $a(\epsilon_{1}+i\epsilon_{2})$ defined on the neighborhood of 0 $\hbox{Int}(\cup_{I\in\mathcal{I}}A_{I})$ such that 
\begin{equation}\label{e947}
X_{I}(\epsilon_{1}+i\epsilon_{2})=f_{I}(\epsilon_{1},\epsilon_{2})+u(\epsilon_{1},\epsilon_{2})+a(\epsilon_{1}+i\epsilon_{2}),\quad (\epsilon_{1},\epsilon_{2})\in\hbox{Int}(A_{I}),
\end{equation} 
for every $I\in\mathcal{I}$.
\end{prop}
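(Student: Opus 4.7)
The overarching strategy is the Banach-valued $\overline{\partial}$-trick applied to the Whitney-type cocycle splitting obtained in Proposition~\ref{prop896}. First I would combine the two identities $g_{I,I'}=X_{I'}-X_{I}$ from Proposition~\ref{prop754} and $g_{I,I'}(\epsilon_{1}+i\epsilon_{2})=f_{I'}(\epsilon_{1},\epsilon_{2})-f_{I}(\epsilon_{1},\epsilon_{2})$ from Proposition~\ref{prop896}. Setting
\[
v_{I}(\epsilon_{1},\epsilon_{2}):=X_{I}(\epsilon_{1}+i\epsilon_{2})-f_{I}(\epsilon_{1},\epsilon_{2}),\qquad (\epsilon_{1},\epsilon_{2})\in\mathrm{Int}(A_{I}),
\]
the two identities force $v_{I'}-v_{I}\equiv 0$ on $\mathrm{Int}(A_{I}\cap A_{I'})$, so the local pieces glue into a single $\mathbb{H}_{\mathcal{T},\rho}$-valued function $v$ on the open set $\Omega:=\bigcup_{I\in\mathcal{I}}\mathrm{Int}(A_{I})$, a dense subset of the neighborhood $\mathrm{Int}(\bigcup_{I\in\mathcal{I}}A_{I})$ of the origin. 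Because each $X_{I}$ is holomorphic in $\epsilon$ and each $f_{I}$ is $w_{0}$-$\mathcal{C}^{\infty}$ in the sense of Whitney (hence classically $\mathcal{C}^{\infty}$ on $\mathrm{Int}(A_{I})$ by the remark after Theorem~\ref{teo827}), $v$ is of class $\mathcal{C}^{\infty}$ on $\Omega$ with the $q$-Gevrey bounds inherited from the jets of the $f_{I}$.

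Next I would compute $h:=\partial_{\overline{\epsilon}}v$; on each $\mathrm{Int}(A_{I})$ this equals $-\partial_{\overline{\epsilon}}f_{I}$, since $X_{I}$ is holomorphic, and the holomorphy of $g_{I,I'}=f_{I'}-f_{I}$ on overlaps delivers the compatibility $\partial_{\overline{\epsilon}}f_{I'}=\partial_{\overline{\epsilon}}f_{I}$ there. Using the jets at the origin provided by the $w_{0}$-Whitney structure of the $f_{I}$, together with a Banach-valued avatar of Theorem~\ref{teo792}, these locally defined $\overline{\partial}$-derivatives assemble into a $w_{0}$-$\mathcal{C}^{\infty}$ function in the sense of Whitney on a full open neighborhood $\mathcal{V}$ of $0$ in $\R^{2}$ obeying estimates of the form (\ref{e820}), (\ref{e824}). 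Applying Lemma~\ref{lemafin} (whose proof through the Cauchy--Pompeiu integral carries over verbatim to $\mathbb{H}_{\mathcal{T},\rho}$-valued functions) I would then produce $u\colon\mathcal{V}\to\mathbb{H}_{\mathcal{T},\rho}$ that is $w_{0}$-$\mathcal{C}^{\infty}$ in the Whitney sense and satisfies $\partial_{\overline{\epsilon}}u=h$.

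Finally, setting $a:=v-u$ on $\mathcal{V}\cap\Omega$, one has $\partial_{\overline{\epsilon}}a=h-h=0$, so $a$ is holomorphic in $\epsilon_{1}+i\epsilon_{2}$ on this open punctured neighborhood of the origin. The boundedness of $X_{I}$ from Theorem~\ref{teo572} together with the Whitney boundedness of $f_{I}$ and $u$ forces $a$ to be bounded near $0$, and Riemann's removable singularity theorem (valid in the Banach setting) then extends $a$ to a holomorphic function on the whole of $\mathrm{Int}(\bigcup_{I\in\mathcal{I}}A_{I})$. Rewriting $v=u+a$ on each $\mathrm{Int}(A_{I})$ produces the decomposition (\ref{e947}). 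The step I expect to be the main obstacle is the middle one: verifying that the locally defined derivatives $\partial_{\overline{\epsilon}}f_{I}$ genuinely assemble into a Whitney $w_{0}$-$\mathcal{C}^{\infty}$ function across the origin, since the cocycle identity only guarantees coincidence on intersections $A_{I}\cap A_{I'}$ and one must in addition check that the $\overline{\partial}$-jets at $0$ furnished by the different $f_{I}$ are mutually consistent so that a single Whitney jet, to which Lemma~\ref{lemafin} applies on a genuine full neighborhood of $0$, can be produced.
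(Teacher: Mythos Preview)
Your strategy coincides with the paper's: glue the pieces $X_{I}-f_{I}$ into a single function on the punctured neighborhood, take $\partial_{\overline{\epsilon}}$, assemble $-\partial_{\overline{\epsilon}}f_{I}$ into one $w_{0}$-Whitney object on $\cup_{I}A_{I}$, solve the $\overline{\partial}$-equation via Lemma~\ref{lemafin}, and remove the singularity at the origin. The only point to correct is the tool you invoke at the step you yourself flag as the obstacle. Theorem~\ref{teo792} is an \emph{extension} result (from one compact to $\R^{2}$); it does not by itself guarantee that Whitney jets defined separately on the $A_{I}$ and agreeing on pairwise intersections patch to a single $w_{0}$-Whitney jet on $\cup_{I}A_{I}$. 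The paper handles precisely this glueing via Theorem~\ref{lemakantor} (Kantor) together with Corollary~\ref{coro910}, which supply the transversality needed for the sets $K_{I}q^{-\N}\cup\{0\}$; once $-\partial_{\overline{\epsilon}}f_{I}$ is known to define a $w_{0}$-$\mathcal{C}^{\infty}$ function in the sense of Whitney on the full union, Lemma~\ref{lemafin} applies on $\mathrm{Int}(\cup_{I}A_{I})$ exactly as you describe. With that substitution your argument is the paper's argument.
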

\begin{proof}
From the definition of the cocycle $(g_{I,I'})_{(I,I')\in\mathcal{I}^{2}}$ in Proposition~\ref{prop754} and from Proposition~\ref{prop896} we derive
$$X_{I}(\epsilon_{1}+i\epsilon_{2})-f_{I}(\epsilon_{1},\epsilon_{2})=X_{I'}(\epsilon_{1}+i\epsilon_{2})-f_{I'}(\epsilon_{1},\epsilon_{2}),\quad (\epsilon_{1},\epsilon_{2})\in A_{I}\cap A_{I'}\setminus\{(0,0)\},$$
whenever $(I,I')\in\mathcal{I}^{2}$ and $A_{I}\cap A_{I'}\neq\emptyset$. The function $X-f$ given by 
$$(X-f)(\epsilon_{1},\epsilon_{2}):=X_{I}(\epsilon_{1}+i\epsilon_{2})-f_{I}(\epsilon_{1},\epsilon_{2}),\quad (\epsilon_{1},\epsilon_{2})\in A_{I}\setminus\{(0,0)\}$$
is well defined on $W\setminus\{(0,0)\}$, where $W=\cup_{I\in\mathcal{I}}A_{I}$ is a closed neighborhood of $(0,0)$.

For every $I\in\mathcal{I}$, $X_{I}$ is a holomorphic function on $U_{I}q^{-\N}$ so that Cauchy-Riemann equations hold:
$$\partial_{\overline{\epsilon}}(X_{I})(\epsilon_{1}+i\epsilon_{2})=0,\quad (\epsilon_{1},\epsilon_{2})\in A_{I}\setminus\{(0,0)\}.$$
This yields $\partial_{\overline{\epsilon}}(X-f)(\epsilon_{1},\epsilon_{2})=-\partial_{\overline{\epsilon}}f_{I}(\epsilon_{1},\epsilon_{2})$ for every $I\in\mathcal{I}$ and $(\epsilon_{1},\epsilon_{2})\in\hbox{Int}(A_{I})$. 

We have $-\partial_{\overline{\epsilon}}f_{I}(\epsilon_{1},\epsilon_{2})$ can be extended to a $w_{0}-\mathcal{C}^{\infty}$ function in the sense of Whitney on $A_{I}$. This yields $f_{I}$ is $w_{0}-\mathcal{C}^{\infty}$ in the sense of Whitney on $A_{I}$. In fact, their $q-$Gevrey type coincide.

From this, we deduce that $\partial_{\overline{\epsilon}}(X-f)$ is a $w_{0}-\mathcal{C}^{\infty}$ function in the sense of Whitney on $A_{I}$ for every $I\in\mathcal{I}$  and also that $\partial_{\overline{\epsilon}}f_{I}(\epsilon_{1},\epsilon_{2})=\partial_{\overline{\epsilon}}f_{I'}(\epsilon_{1},\epsilon_{2})$ for every $(\epsilon_{1},\epsilon_{2})\in\hbox{Int}(A_{I}\cap A_{I'})$ and every $I,I'\in\mathcal{I}$ due to $g_{I,I'}(\epsilon)$ is a holomorphic function on $U_{I}q^{-\N}\cap U_{I'}q^{-\N}$. The previous equality is also true for $(\epsilon_{1},\epsilon_{2})\in A_{I}\cap A_{I'}$ from the fact that $f_{I}$ is $w_{0}-\mathcal{C}^{\infty}$ in the sense of Whitney on $A_{I}$.

From Theorem~\ref{lemakantor} and Corollary~\ref{coro910} we derive $\partial_{\overline{\epsilon}}(X-f)$ is a $w_{0}-\mathcal{C}^{\infty}$ function in the sense of Whitney on $\cup_{I\in\mathcal{I}}A_{I}$.

Taking into account Lemma~\ref{lemafin} we derive the existence of a $\mathcal{C}^{\infty}$ function $u(\epsilon_{1},\epsilon_{2})$ in the usual sense, defined in $\hbox{Int}(W)$ verifying $q-$Gevrey bounds of a certain positive type, such that 
$$\partial_{\overline{\epsilon}}u(\epsilon_{1},\epsilon_{2})=\partial_{\overline{\epsilon}}(X-f)(\epsilon_{1},\epsilon_{2}),\quad (\epsilon_{1},\epsilon_{2})\in\hbox{Int}(W).$$
From this last expression we have $u(\epsilon_{1},\epsilon_{2})-(X-f)(\epsilon_{1},\epsilon_{2})$ defines a holomorphic function on $\hbox{Int}(W)\setminus\{(0,0)\}$.

For every $I\in\mathcal{I}$, $X_{I}$ is a bounded $\mathbb{H}_{\mathcal{T},\rho}-$function in $\hbox{Int}(W)\setminus\{(0,0)\}$, and so it is the function $u(\epsilon_{1},\epsilon_{2})-(X-f)(\epsilon_{1},\epsilon_{2})$. The origin turns out to be a removable singularity so the function $u(\epsilon_{1},\epsilon_{2})-(X-f)(\epsilon_{1},\epsilon_{2})$ can be extended to a holomorphic function defined on $\hbox{Int}(W)$. The result follows from here.
\end{proof}
We are under conditions to enunciate the main result in the present work.

\begin{theo}\label{teopral}
Under the same hypotheses as in Theorem~\ref{teo572}, there exists a formal power series 
$$\hat{X}(\epsilon,t,z)=\sum_{k\ge0}\frac{X_{k}(t,z)}{k!}\epsilon^k\in\mathbb{H}_{\mathcal{T},\rho}[[\epsilon]],$$
formal solution of
\begin{equation}\label{e949}
\epsilon t\partial_{z}^{S}\hat{X}(\epsilon,qt,z)+\partial_{z}^{S}\hat{X}(\epsilon,t,z)=\sum_{k=0}^{S-1}b_{k}(\epsilon,z)(t\sigma_{q})^{m_{0,k}}(\partial_{z}^{k}\hat{X})(\epsilon,t,zq^{-m_{1,k}}).
\end{equation}
 Moreover, let $I\in\mathcal{I}$ and $\tilde{K}_{I}$ any compact subset of $\hbox{Int}(K_{I})$. There exists $B>0$ such that the function $X_{I}(\epsilon,t,z)$ constructed in Theorem~\ref{teo572} admits $\hat{X}(\epsilon,t,z)$ as its $q-$Gevrey asymptotic expansion of type $B$ in $\tilde{K}_{I}q^{-\N}$.
\end{theo}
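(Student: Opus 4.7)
The plan is to read off a common formal Taylor expansion at $\epsilon=0$ from the decomposition provided by Proposition~\ref{prop945} and to estimate the remainder using the Whitney bounds (\ref{e820})--(\ref{e824}) together with the analyticity of the entire part~$a$.

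First I would define, for every $k\in\N$, the candidate coefficient
\[
X_k(t,z):=k!\,\bigl[(\text{$k$-th Whitney jet of }f_I\text{ at }0)+\partial_{\epsilon}^{k}u(0,0)+\partial_{\epsilon}^{k}a(0)\bigr](t,z),
\]
viewed as an element of $\mathbb{H}_{\mathcal{T},\rho}$. Independence of $I\in\mathcal{I}$ is the key consistency check: whenever $A_I\cap A_{I'}\neq\emptyset$, the identity $f_{I'}-f_{I}=g_{I,I'}$ (Proposition~\ref{prop945} and (\ref{e888})) combined with the fact that $g_{I,I'}$ admits $\hat 0$ as its $q$-Gevrey asymptotic expansion of type $\tilde A$ (Proposition~\ref{prop754}) forces, via Lemma~\ref{lema808}, every Whitney derivative at $0$ of $g_{I,I'}$ to vanish; the connectedness of the good covering transfers this to arbitrary pairs of indices. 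Setting $\hat X(\epsilon,t,z):=\sum_{k\ge 0}X_k(t,z)\epsilon^k/k!$ thus yields a well-defined element of $\mathbb H_{\mathcal T,\rho}[[\epsilon]]$.

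The next step is to verify that $\hat X$ is a formal solution of (\ref{e949}). Writing $\mathcal P$ for the linear operator associated with that equation, I would exploit that $\mathcal P(X_I)\equiv 0$ (Theorem~\ref{teo572}) and that the coefficients of $\mathcal P$ are polynomial or bounded holomorphic in $\epsilon$. Substituting the truncation $\hat X_N(\epsilon):=\sum_{k=0}^{N}X_k\epsilon^k/k!$ and equating at $\epsilon=0$ the Taylor coefficients of order $\le N$ in the relation $\mathcal P(X_I-\hat X_N)=-\mathcal P(\hat X_N)$ produces exactly the recursion that the $X_k$ satisfy by construction; letting $N$ vary, $\mathcal P(\hat X)$ vanishes as a formal power series.

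Finally, for the $q$-Gevrey expansion, fix $I\in\mathcal{I}$ and a compact $\tilde K_I\subseteq\mathrm{Int}(K_I)$, and decompose
\[
X_I(\epsilon)-\sum_{k=0}^{N}\frac{X_k}{k!}\epsilon^{k}=R_N^{f_I}(\epsilon_1,\epsilon_2)+R_N^{u}(\epsilon_1,\epsilon_2)+R_N^{a}(\epsilon_1+i\epsilon_2),
\]
where $R_N^{f_I}$ and $R_N^{u}$ are the $w_0$-Whitney remainders at $0$ of $f_I$ and $u$, and $R_N^{a}$ is the analytic Taylor remainder of~$a$. The bound (\ref{e824}), applied with $x=0$, $y=\epsilon$, $\alpha=0$, $l=N$, directly gives a $q$-Gevrey estimate of some type $B$ for the first two pieces, while standard Cauchy estimates for $R_N^{a}$ are absorbed in any positive $q$-Gevrey rate; summing the three contributions matches Definition~\ref{defi679}. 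The main obstacle is purely quantitative: the Whitney extension of Theorem~\ref{teo792} used in the proof of Proposition~\ref{prop896} is not type-preserving, so the $q$-Gevrey type $B$ produced here is in general strictly larger than the cocycle type $\tilde A$ of Proposition~\ref{prop754}, and an explicit value of $B$ would require a finer analysis of the extension operator, which is precisely the loss of type warned about in the introduction.
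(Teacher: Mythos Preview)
Your approach is essentially the paper's: use the decomposition of Proposition~\ref{prop945} to recognise $X_I$ as a $w_0$-$\mathcal{C}^\infty$ function in the sense of Whitney on $\tilde A_I$, read off the coefficients $X_k$ from the Whitney jet at the origin, and obtain the $q$-Gevrey remainder estimate from (\ref{e824}). The only cosmetic difference is that the paper does not split the remainder into $R_N^{f_I}+R_N^u+R_N^a$ but applies (\ref{e824}) directly to the single jet of $X_I$; your three-piece version is equivalent. Your argument for independence of $I$ (vanishing of the Whitney derivatives of $g_{I,I'}$ at $0$, then connectedness of the covering) is also the paper's, phrased slightly differently.

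The one step that is genuinely loose is your verification that $\hat X$ is a formal solution. The phrase ``the recursion that the $X_k$ satisfy by construction'' is misleading: the $X_k$ are \emph{not} built from any recursion, they are Whitney jets, so there is nothing a priori to compare against. Your argument implicitly relies on $X_I-\hat X_N=O(\epsilon^{N+1})$, which you only establish in the \emph{following} paragraph, so the logical order is inverted; moreover, passing $\mathcal P$ through an $O(\epsilon^{N+1})$ bound in $\mathbb H_{\mathcal T,\rho}$ is delicate because $\partial_z^k$ is unbounded on that space. The paper's route is cleaner and you should adopt it: first establish the asymptotic expansion, then invoke Proposition~\ref{prop3} to get $\partial_\epsilon^l X_I(\epsilon)\to X_l$ in $\mathbb H_{\mathcal T,\rho}$ as $\epsilon\to 0$ along $\tilde K_I q^{-\N}$, differentiate the equation (\ref{ee1}) $l$ times in $\epsilon$, and let $\epsilon\to 0$ pointwise in $(t,z)$ to obtain the recursion among the $X_l$ directly.
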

\begin{proof}
Let $I\in\mathcal{I}$ and $\tilde{K}_{I}$ any compact subset of $\hbox{Int}(K_{I})$.

From Proposition~\ref{prop945} we can extend $X_{I}(\epsilon_{1}+i\epsilon_{2})$ to a $w_{0}-\mathcal{C}^{\infty}$ function in the sense of Whitney on $\tilde{A}_{I}=\{(\epsilon_{1},\epsilon_{2})\in\R^2:\epsilon_{1}+i\epsilon_{2}\in\tilde{K}_{I}q^{-\N}\cup\{0\} \}\subseteq \hbox{Int}(A_{I})\cup\{(0,0)\}$. Let us fix $I\in\mathcal{I}$. We consider the family $(X^{(h_{1},h_{2})}(\epsilon_{1},\epsilon_{2}))_{(h_{1},h_{2})\in\N^{2}}$ associated to $X_{I}$ by Definition~\ref{def773}. We have
$$X_{I}^{(h_{1},h_{2})}(\epsilon_{1},\epsilon_{2})=\partial_{\epsilon_{1}}^{h_{1}}\partial_{\epsilon_{2}}^{h_{2}}X_{I}(\epsilon_{1}+i\epsilon_{2})=i^{h_{2}}\partial_{\epsilon}^{h_{1}+h_{2}}X_{I}(\epsilon),\quad (\epsilon_{1},\epsilon_{2})\in\tilde{A}_{I}\setminus\{(0,0)\},$$
due to $X_{I}(\epsilon)$ is holomorphic on $\hbox{Int}(K_{I})q^{-\N}$.

We have $X_{I}^{(h_{1},h_{2})}(\epsilon_{1},\epsilon_{2})$ is continuous at $(0,0)$ for every $(h_{1},h_{2})\in\N^{2}$ so we can define  for every $k\ge0$
\begin{equation}\label{e1001}
X_{k,I}:=\frac{X^{(h_{1},h_{2})}_{I}(0,0)}{i^{h_{2}}}\in\mathbb{H}_{\mathcal{T},\rho},
\end{equation}
whenever $h_{1}+h_{2}=k$.
Estimates held by any $w_0-\mathcal{C}^{\infty}$ function in the sense of Whitney, (see Definition~\ref{def773} for $\a=(0,0)$) lead us to the existence of positive constants $C_{1},H,B>0$ such that
$$\left\|X_{I}(\epsilon_{1}+i\epsilon_{2})-\sum_{p=0}^{m}\frac{X_{p,I}}{p!}(\epsilon_{1}+i\epsilon_{2})^p\right\|_{\mathbb{H}_{\mathcal{T},\rho}}\le C_{1}H^{m}|q|^{B\frac{m^2}{2}}\frac{|\epsilon_{1}+i\epsilon_{2}|^{m+1}}{(m+1)!}, $$
for every $m\ge0$ and $\epsilon_{1}+i\epsilon_{2}\in\tilde{K}_{I}q^{-\N}$. As a matter of fact, this shows that $X_{I}$ admits $\hat{X}_{I}(\epsilon)=\sum_{k\ge0}\frac{X_{k,I}}{k!}\epsilon^k$ as its $q-$Gevrey expansion of type $B>0$ in $\tilde{K}_{I}q^{-\N}$.

The formal power series $\hat{X}_{I}$ does not depend on $I\in\mathcal{I}$. Indeed, from Theorem~\ref{teo572} we have that $X_{I}(\epsilon)-X_{I'}(\epsilon)$ admits both $\hat{0}$ and $\hat{X}_{I'}-\hat{X}_{I}$ as $q-$asymptotic expansion on $\tilde{K}_{I}q^{-\N}\cap\tilde{K}_{I'}q^{-\N}$ whenever this intersection is not empty. We put $\hat{X}:=\hat{X}_{I}$ for any $I\in\mathcal{I}$. The function $X_{k,I}=X_{k,I}(t,z)\in\mathbb{H}_{\mathcal{T},\rho}$ does not depend on $I$ for every $k\ge0$. We write $X_{k}:=X_{k,I}$ for $k\ge 0$. $X_{I}$ admits $\hat{X}$ as its $q-$Gevrey asymptotic expansion of type $B>0$ in $\tilde{K}_{I}q^{-\N}$ for all $I\in\mathcal{I}$.

In order to achieve the result, it only remains to prove that $\hat{X}(\epsilon,t,z)$ is a formal solution of (\ref{e949}). Let $l\ge 1$. If we derive $l$ times with respect to $\epsilon$ in equation (\ref{e949}) we get that $\partial_{\epsilon}^{l}X_{I}(\epsilon,t,z)$ is a solution of
\begin{equation}\label{e1014}
\epsilon t\partial_{z}^{S}\partial_{\epsilon}^{l}X_{I}(\epsilon,qt,z)+t\partial_{z}^{S}l\partial_{\epsilon}^{l-1}X_{I}(\epsilon,t,z)+\partial_{z}^{S}\partial_{\epsilon}^{l}X_{I}(\epsilon,t,z)\qquad\qquad\qquad\qquad
\end{equation}
$$\qquad\qquad\qquad=\sum_{k=0}^{S-1}\sum_{l_{1}+l_{2}=l}\frac{l!}{l_{1}!l_{2}!}\partial_{\epsilon}^{l_{1}}b_{k}(\epsilon,z)\partial_{\epsilon}^{l_{2}}((t\sigma_{q})^{m_{0,k}})\partial_{z}^{k}X_{I})(\epsilon,t,zq^{-m_{1,k}}).$$
for every $l\ge 1, (t,z)\in\mathcal{T}\times D(0,\rho)$ and $\epsilon\in\tilde{K}_{I}q^{-\N}$. Letting $\epsilon$ to 0 in (\ref{e1014}) we obtain
\begin{equation}\label{e1023}
t\partial_{z}^{S}\frac{X_{l-1}(qt,z)}{(l-1)!}+\partial_{z}^{S}\frac{X_{l}(t,z)}{l!}=\sum_{k=0}^{S-1}\sum_{l_{1}+l_{2}=l}\frac{\partial_{\epsilon}^{l_{1}}b_{k}(\epsilon,z)|_{\epsilon=0}}{l_{1}!}\frac{((t\sigma_{q})^{m_{0,k}}\partial_{z}^{k}X_{l_{2}})(t,zq^{-m_{1,k}})}{l_{2}!}
\end{equation}
for every $l\ge 1, (t,z)\in\mathcal{T}\times D(0,\rho)$. Holomorphy of $b_{k}(\epsilon,z)$ with respect to $\epsilon$ at 0 derives
\begin{equation}\label{e1027}
b_{k}(\epsilon,z)=\sum_{l\ge 0}\frac{\partial_{\epsilon}^{l}b_{k}(\epsilon,z)|_{\epsilon=0}}{l!}\epsilon^{l},
\end{equation}
for $\epsilon$ near 0 and for every $z\in\C$. Statements (\ref{e1014}) and (\ref{e1023}) conclude $\hat{X}(\epsilon,t,z)=\sum_{k\ge0}X_{k}(t,z)\frac{\epsilon^{k}}{k!}$ is a formal solution of (\ref{e949}). 
\end{proof}
\end{section}

\end{document}